\documentclass[11pt, a4paper]{article}
\usepackage{amsmath,bm,amssymb,latexsym,color,enumerate,extarrows}
\usepackage{authblk}
\usepackage[mathscr]{eucal}
\usepackage[colorlinks,
            linkcolor=blue,
            anchorcolor=green,
            citecolor=magenta
           ]{hyperref}
\newtheorem{thm}{Theorem}[section]
\newtheorem{pro}[thm]{Proposition}
\newtheorem{lem}[thm]{Lemma}

\newtheorem{con}{Construction}

\newtheorem{fact}{Fact}
\newtheorem{rema}[thm]{Remark}

\newtheorem{example}{Example}[section]
\newtheorem{defin}{Definition}[section]

\newcommand{\proof}{{\it Proof.\quad}}
\newcommand{\qed}{\hfill\Box\medskip}

\allowdisplaybreaks
\usepackage{CJK}

\begin{document}

\renewcommand{\baselinestretch}{1.3}

\title{\bf On extremal cross $t$-intersecting families with $t$-covering number conditions}

\author[1]{Yu Zhu\thanks{E-mail: \texttt{zhu\_y@mail.bnu.edu.cn}}}
\author[1]{Benjian Lv\thanks{Corresponding author. E-mail: \texttt{bjlv@bnu.edu.cn}}}
\author[1]{Kaishun Wang\thanks{E-mail: \texttt{wangks@bnu.edu.cn}}}

\affil[1]{\small Laboratory of Mathematics and Complex Systems (Ministry of Education), School of Mathematical Sciences, Beijing Normal University, Beijing 100875, China}

\date{}
\maketitle
\begin{abstract}
Let $n$, $k$ and $t$ be positive integers, and let $\mathcal{F}$ be a collection of $k$-subsets of $[n]=\{1,2,\dots,n\}$.
The $t$-covering number $\tau_t(\mathcal{F})$ of $\mathcal{F}$ is defined as the minimum size of a set $T$ such that $|F\cap T|\geq t$ for all $F\in \mathcal{F}$.
For positive integers $k_1$ and $k_2$, let $\mathcal{F}_i$ be a collection of $k_i$-subsets of $[n]$ for $i\in \{1,2\}$.
The families $\mathcal{F}_1$ and $\mathcal{F}_2$ are said to be cross $t$-intersecting if $|F_1\cap F_2|\geq t$ for all $F_1\in\mathcal{F}_1$ and $F_2\in \mathcal{F}_2$. When $\mathcal{F}_1=\mathcal{F}_2$, $\mathcal{F}_1$ is called a $t$-intersecting family.
In this paper, we first characterize the extremal structures of cross $t$-intersecting families $\mathcal{F}_1$ and $\mathcal{F}_2$ that maximize $|\mathcal{F}_1||\mathcal{F}_2|$ under the condition that $\tau_t(\mathcal{F}_1)\geq t+1$ and $\tau_t(\mathcal{F}_2)\geq t+1$. We then describe the maximal $t$-intersecting families with $t$-covering number $t+1$.

\medskip

\noindent {\em AMS classification:}  05D05, 05A10

\noindent {\em Key words:} cross $t$-intersecting families; $t$-intersecting family; $t$-cover; $t$-covering number

\end{abstract}

\section{Introduction}
Let $n$, $k$, $\ell$ and $t$ be positive integers. Write $[\ell,k]=\{\ell,\ell+1,\ldots,k\}$ with $\ell\leq k$, and abbreviate $[1,k]$ as $[k]$.
Given $S \subseteq M \subseteq [n]$, denote by $[S, M]_k$ the family of all $k$-subsets of $M$ that contain $S$, in particular, write $[S\rangle_k=\big[S, [n]\big]_k$ and ${M\choose k}=[\emptyset, M]_k$.
For a family $\mathcal{F}\subseteq{[n]\choose k}$, a subset $T\subseteq[n]$ is called a $t$-\emph{cover} of $\mathcal{F}$ if $|T\cap F|\geq t$ for all $F\in\mathcal{F}$, and the $t$-\emph{covering number} $\tau_t(\mathcal{F})$ of $\mathcal{F}$ is the minimum size of a $t$-cover of $\mathcal{F}$.
Obviously, $\tau_t(\mathcal{F})=t$ if and only if all members of $\mathcal{F}$ share a common $t$-subset.

We call $\mathcal{F}\subseteq {[n]\choose k}$ and $\mathcal{G}\subseteq {[n]\choose \ell}$ \emph{cross $t$-intersecting} if $|F\cap G|\geq t$ for all $F\in \mathcal{F}$ and $G\in \mathcal{G}$, and simply say cross intersecting when $t=1$.
They are said to be \emph{maximal} if $\mathcal{F}=\mathcal{F}'$ and $\mathcal{G}=\mathcal{G}'$ for every pair of cross $t$-intersecting families $\mathcal{F}'\subseteq {[n]\choose k}$ and $\mathcal{G}'\subseteq {[n]\choose \ell}$ such that $\mathcal{F}\subseteq\mathcal{F}'$ and $\mathcal{G}\subseteq\mathcal{G}'$.
Clearly, $t\leq \tau_t(\mathcal{F})\leq \ell$ and $t\leq \tau_t(\mathcal{G})\leq k$ when both $\mathcal{F}$ and $\mathcal{G}$ are non-empty.
For two pairs of cross $t$-intersecting families $(\mathcal{F},~\mathcal{G})$ and $(\mathcal{F}',~ \mathcal{G}')$,  we say that they are \emph{isomorphic} if there exists a permutation $\sigma\in S_n$ such that $(\sigma(\mathcal{F}),~ \sigma(\mathcal{G}))=(\mathcal{F}',~\mathcal{G}')$, denoted by $(\mathcal{F},~\mathcal{G}) \simeq (\mathcal{F}',~ \mathcal{G}')$.

If the cross $t$-intersecting families $\mathcal{F}$ and $\mathcal{G}$ satisfy $\mathcal{F}=\mathcal{G}$, then $\mathcal{F}$ is well known as a \emph{$t$-intersecting family}.
The study of $t$-intersecting families began with the Erd\H{o}s-Ko-Rado theorem \cite{Erdos-Ko-Rado-1961-313},
which shows that for large $n$, the size of a $t$-intersecting family $\mathcal{F}\subseteq {[n]\choose k}$ is at most ${n-t\choose k-t}$, and equality holds if and only if $\mathcal{F}=[T\rangle_k$ for some $t$-subset $T$. Observe that the $t$-covering number of this extremal family  is $t$.
Another foundational problem in this area is to determine the extremal structures of $t$-intersecting families $\mathcal{F}$ with $\tau_t(\mathcal{F})\geq t+1$. This problem was solved by Hilton and Milner \cite{Hilton-Milner-1967} for $t=1$, and  by Frankl \cite{Frankl-1978-1} and Ahlswede and Khachatrian \cite{Ahlswede-Khachatrian-1996} for $t\geq 2$.
For further studies on $t$-intersecting families, we refer readers to \cite{Ahlswede-Khachatrian-1997,Cao-set,Frankl-1978, Frankl-Furedi-1986, Frankl--Furedi-1991, Han-Kohayakawa, Kostochka-Mubayi, Wilson-1984, Yao-Liu-Wang-2026}.

The problem of determining the maximum product of sizes of cross $t$-intersecting families $\mathcal{F}$ and $\mathcal{G}$ has received some attention.
It was first investigated by
Pyber \cite{Pyber-1986}, who showed that $|\mathcal{F}||\mathcal{G}|\leq {n-1\choose k-1}{n-1\choose \ell-1}$ when $t=1$ and $n$ is large. Subsequently,  Matsumoto and Tokushige \cite{Matsumoto-Tokushige-1989} found the least possible $n$ for which the upper bound holds.
For $t\geq 2$ and $k=\ell$, Tokushige \cite{Tokushige-2013}  conjectured that $|\mathcal{F}||\mathcal{G}|\leq {n-t\choose k-t}^2$ for $n\geq (t+1)(k-t+1)$, and if $n>   (t+1)(k-t+1)$, then equality holds if and only if
$\mathcal{F}=\mathcal{G}=[T\rangle_k$ for some $t$-subset $T$.
After a series of contributions \cite{Tokushige-2010, Frankl--Lee--Siggers--Tokushige-2014} toward the conjecture, it was completely settled very recently by Zhang and Wu \cite{Wang-Wu-2025} for $t\geq 3$, and by Tanaka and Tokushige \cite{Tanaka-Tokushige-2025} for $t=2$. For general $k$ and $\ell$, Tokushige \cite{Tokushige-2013} also conjectured that $|\mathcal{F}||\mathcal{G}|\leq {n-t\choose k-t}{n-t\choose \ell-t}$ for $n\geq (t+1)\big(\max\{k,\ell\}-t+1\big)$. Borg \cite{Borg-2014-uniform, Borg-2016} proved the conjecture for large $n$ and showed that equality holds if and only if $\mathcal{F}=[T\rangle_k$ and $\mathcal{G}=[T\rangle_\ell$ for some $t$-subset $T$.

Over the past decade, the maximum product of sizes of cross $t$-intersecting families under restricted conditions and their extremal structures have attracted considerable attention.
Cao et al.~\cite{Cao-Lu-Lv-Wang-2024} and He et al.~\cite{He-Li-Wu-Zhang-2026} determined the maximum value of $|\mathcal{F}||\mathcal{G}|$ with $\tau_t(\mathcal{F})\geq t$ and $\tau_t(\mathcal{G})\geq t+1$ for large $n$, and described their extremal structures.
Frankl and Wang \cite{Frankl-Wang-2022, Frankl-Wang-2023} characterized the maximum value of $|\mathcal{F}||\mathcal{G}|$ under $k=\ell$,  $\tau_t(\mathcal{F})\geq t+1$ and $\tau_t(\mathcal{G})\geq t+1$ for large $n$.
More related papers can be found in \cite{Borg-2014, Frankl-- Kupavskii-2017}.

In this paper, we focus on the extremal structures of cross $t$-intersecting families $\mathcal{F}\subseteq {[n]\choose k}$ and $\mathcal{G}\subseteq {[n]\choose \ell}$ under the conditions  $\tau_t(\mathcal{F})\geq t+1$ and $\tau_t(\mathcal{G})\geq t+1$, where $k$ and $\ell$ are not necessarily equal, with $\min\{k,\ell\}\geq t+1$.
First, we define the following constructions.
\begin{con}\label{Con1}
$\mathcal{A}(k,t)=\left\{F\in {[n]\choose k} : |F\cap [t+2]|\geq t+1\right\}$.
\end{con}
\begin{con}\label{Con2}
$\mathcal{B}(k;(a_1,a_2,a_3,a_4))=\big[\{a_1,a_2\}\big\rangle_k \bigcup \big[\{a_2,a_3\}\big\rangle_k\bigcup \big[\{a_3,a_4\}\big\rangle_k,$ where $a_1$, $a_2$, $a_3$ and $a_4$ are distinct elements in $[n]$.
\end{con}
\begin{con}\label{Con3}
\begin{align*}
\mathcal{C}_1(\ell,t)&=\big\{[\ell+1]\setminus \{i\} : i\in  [t+1]\big\}\bigcup \big[[t+1]\big\rangle_\ell,\\
\mathcal{C}_2(k,t;\ell)&=\left\{F\in {[n]\choose k} : |F\cap [t+1]|= t,~|F\cap [t+2,\ell+1]|\geq 1\right\}\bigcup \big[[t+1]\big\rangle_k.
\end{align*}
\end{con}

\begin{con}\label{Con4}
Let $X\in {[t+1,n]\choose k-t+1}$ and $Y\in {[t+1,n]\choose \ell-t+1}$ such that $|X\cap Y|\geq 2-\delta_{1,t}$, where $\delta_{1,t}=1$ if $t=1$, and $0$ otherwise. Define
$$\mathcal{H}(k,t;X,Y)=\left\{F\in {[n]\choose k} : [t]\subseteq F,~|F\cap Y|\geq 1\right\}\bigcup \big\{X\cup[t]\setminus \{i\} : i\in [t]\big\}.$$
\end{con}

Note that $\mathcal{H}(t+1,t;X,X)\simeq \mathcal{A}(t+1,t)$, $\mathcal{H}(2,1;X,Y)\simeq \mathcal{B}(2;(1,2,3,4))$ if $|X\cap Y|=1$, and $(\mathcal{C}_1(t+1,t),~\mathcal{C}_2(k,t;t+1))\simeq (\mathcal{A}(t+1,t),~\mathcal{A}(k,t))$.
Clearly, the pairs $(\mathcal{A}(k,t),~\mathcal{A}(\ell,t))$, $(\mathcal{H}(k,t;X,Y),~\mathcal{H}(\ell,t;Y,X))$, and $(\mathcal{C}_1(\ell,t),~\mathcal{C}_2(k,t;\ell))$  are cross $t$-intersecting families. In addition, $\mathcal{B}(k;(a_1,a_2,a_3,a_4))$ and $\mathcal{B}(\ell;(a_1,a_3,a_2,a_4))$ form cross intersecting families.

If we further view $|\mathcal{A}(k,t)||\mathcal{A}(\ell,t)|$, $|\mathcal{H}(k,t;X,Y)|| \mathcal{H}(\ell,t;Y,X)|$, $|\mathcal{C}_1(\ell,t)| |\mathcal{C}_2(k,t;\ell)|$ and $|\mathcal{B}(k;(1,3,2,4))| |\mathcal{B}(\ell;(1,2,3,4))|$ (with $t=1$ in the last case) as polynomials in $n$, by Remark \ref{remark-1}, then it is straightforward to calculate that their leading terms are
$c\cdot ((k-t-1)!(\ell-t-1)!)^{-1} n^{k+\ell-2t-2},$ where the constants $c$ are $(t+2)^2$, $(k-t+1)(\ell-t+1)$,  $(t+1)(\ell-t)+1$ and $9$, respectively. Notably, for almost all $k$, $\ell$ and $t$, one can determine which pair of these cross $t$-intersecting families attains the largest product of their sizes when $n$ is sufficiently large. However, this becomes extremely difficult for small $n$.

Our first result characterizes the extremal structures of cross $t$-intersecting families, whose $t$-covering numbers are at least $t+1$.

\begin{thm}\label{main-1}
Let $n$, $k_1$, $k_2$ and $t$ be positive integers with $k_1\geq k_2\geq t+1$ and $n\geq (t+1)^2(k_1+k_2)^2(k_1-t+1)(k_2-t+1)+k_1+k_2-t$. Suppose that $\mathcal{F}_1\subseteq {[n]\choose k_1}$ and $\mathcal{F}_2\subseteq {[n]\choose k_2}$ are cross $t$-intersecting families with $\tau_t(\mathcal{F}_1) \geq t+1$ and $\tau_t(\mathcal{F}_2) \geq t+1$, such that $|\mathcal{F}_1||\mathcal{F}_2|$ is maximized.
\begin{enumerate}[{\rm(i)}]
\item \sloppy{If $t\geq 2$, then $(\mathcal{F}_1,~\mathcal{F}_2)$ $\simeq$ $(\mathcal{A}(k_1,t),~\mathcal{A}(k_2,t))$, $(\mathcal{C}_1(k_1,t),~\mathcal{C}_2(k_2,t;k_1))$, or $(\mathcal{H}(k_1,t;X,Y),~\mathcal{H}(k_2,t;Y,X))$ with $X\in {[t+1,n]\choose k_1-t+1}$, $Y\in {[t+1,n]\choose k_2-t+1}$ and $|X\cap Y|\geq 2$.}
  \item If $t=1$, then $(\mathcal{F}_1,~\mathcal{F}_2)$ $\simeq$ $(\mathcal{A}(k_1,1),~\mathcal{A}(k_2,1))$, $(\mathcal{B}(k_1;(1,3,2,4)), ~\mathcal{B}(k_2;(1,2,3,4)))$, or $(\mathcal{H}(k_1,1;X,Y),~\mathcal{H}(k_2,1;Y,X))$ with $X\in {[2,n]\choose k_1}$, $Y\in {[2,n]\choose k_2}$ and $|X\cap Y|\geq 1$.
\end{enumerate}
\end{thm}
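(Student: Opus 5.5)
We would follow the standard ``covering–number'' strategy for large $n$. Since $\tau_t(\mathcal{F}_i)\geq t+1$ and $\mathcal{F}_1,\mathcal{F}_2$ are cross $t$-intersecting, each $F_2\in\mathcal{F}_2$ is a $t$-cover of $\mathcal{F}_1$ and vice versa. So $\tau_t(\mathcal{F}_1)\leq k_2$ and $\tau_t(\mathcal{F}_2)\leq k_1$. The first step is a counting lemma. If $T$ is a $t$-cover of $\mathcal{F}_i$, then every $F\in\mathcal{F}_i$ contains a $t$-subset of $T$. Iterating this in the usual branching way (choose a member avoiding the current partial cover, then branch over its elements) gives
$$|\mathcal{F}_i|\leq \binom{k_{3-i}}{t}\,k_{3-i}^{\,s-t}\binom{n-s}{k_i-s}$$
whenever $\tau_t(\mathcal{F}_i)\ge s$. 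Hence if either family has $\tau_t\geq t+2$, its size is $O(n^{k_i-t-2})$, while the other is trivially $O(n^{k_{3-i}-t})$. The product is then $O(n^{k_1+k_2-2t-2})$, but with a constant that is too weak. So we need a finer split.

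We handle this by comparing with the lower bound $|\mathcal{A}(k_1,t)||\mathcal{A}(k_2,t)|\sim (t+2)^2 n^{k_1+k_2-2t-2}/((k_1-t-1)!(k_2-t-1)!)$. We show that if $\mathcal{F}_1$ has no $t$-subset $T$ with $|[T\rangle\cap\mathcal{F}_1|$ of order $n^{k_1-t-1}$ (a ``large'' $t$-set), then $|\mathcal{F}_1|=O(n^{k_1-t-2})$. In that case $\mathcal{F}_2$, being $t$-intersecting with all of $\mathcal{F}_1$, is also forced to be small. Using the explicit bound $n\geq (t+1)^2(k_1+k_2)^2(k_1-t+1)(k_2-t+1)+\cdots$, the product then falls below the construction. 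Thus we may assume that both families have large $t$-sets. Let $\mathcal{T}_i$ be the family of $t$-subsets $T$ with $|\mathcal{F}_i\cap[T\rangle_{k_i}|>c\binom{n-t-1}{k_i-t-1}$ for a suitable constant $c$. A standard argument then applies. A large $t$-set $T$ of $\mathcal{F}_1$ must meet every member of $\mathcal{F}_2$ in at least $t-1$ elements outside $\ldots$; more precisely, every $F_2\in\mathcal{F}_2$ satisfies $|F_2\cap T|\geq t$ or $F_2$ contains $T'\in\mathcal{T}_1$-related structure, and $\mathcal{T}_1\cup\mathcal{T}_2$ is a cross $t$-intersecting-type family of $t$-sets living on few points.

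The key structural step is to classify the possible $\mathcal{T}=\mathcal{T}_1\cup\mathcal{T}_2$. Any two large sets from opposite families must meet in at least $t-1$ points; otherwise a random extension would violate the cross $t$-intersection. Since $\tau_t\geq t+1$ forbids a single common $t$-set covering everything, a short analysis leaves only a few options for $\mathcal{T}$:
\begin{enumerate}[(a)]
\item $\mathcal{T}_1$ and $\mathcal{T}_2$ both consist of all $t$-subsets of a $(t+1)$-set, or of the $t$-subsets of a $(t+2)$-set containing a fixed $(t-1)$-set; this leads to the $\mathcal{A}$ and $\mathcal{C}$ types.
\item $\mathcal{T}_1=\mathcal{T}_2=\{[t]\}$, i.e.\ a unique common large $t$-set; this leads to the $\mathcal{H}$ type.
\item For $t=1$ only, the path configuration $\{a_1a_2,a_2a_3,a_3a_4\}$, in the form of edges of $\mathcal{B}$.
\end{enumerate}
In each case we maximize. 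Members containing no large set number only $O(n^{k_i-t-2})$ each, and by maximality the families are the full stars of $\mathcal{T}_i$ together with a bounded number of ``exceptional'' members chosen to destroy a $t$-cover. For case (b) we determine the optimal exceptional sets. $\mathcal{F}_1$ needs members not containing $[t]$; each such member $E$ forces all of $\mathcal{F}_2$ to meet $E$ in $t$ points. Optimizing yields $E=X\cup[t]\setminus\{i\}$ and the condition $|F\cap Y|\geq1$, i.e.\ $\mathcal{H}(k_1,t;X,Y)$. Requiring the mirror condition forces $|X\cap Y|\geq 2-\delta_{1,t}$.

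We expect the main obstacle to be the final comparison among the surviving candidates, done exactly rather than asymptotically. The leading constants $(t+2)^2$, $(k_1-t+1)(k_2-t+1)$, $(t+1)(k_1-t)+1$ and $9$ can coincide or interleave. Moreover, the theorem lists all three families as possible extremizers, so ruling out only non-listed structures requires that every sub-optimal deviation within each case lose at least $\Omega(n^{k_1+k_2-2t-2})$ against that case's own optimum. For this we would compute $|\mathcal{F}_1||\mathcal{F}_2|$ in each case as an explicit function of the exceptional configuration (e.g.\ of $|X\cap Y|$ and the shapes of the exceptional sets). We would then show by direct inequalities, using the stated lower bound on $n$ to absorb the $O(n^{k_1+k_2-2t-3})$ error terms, that any non-listed structure is strictly smaller than one of the listed ones.
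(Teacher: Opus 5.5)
There is a genuine gap in the structural step. Heavy $t$-subsets are the wrong kernel, and the claim your classification rests on is false.

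\textbf{What actually governs the leading term.} It is the family $\mathcal{S}_2$ of all $(t+1)$-element $t$-covers of $\mathcal{F}_2$. By maximality $\bigcup_{B\in\mathcal{S}_2}[B\rangle_{k_1}\subseteq\mathcal{F}_1$. A $(t+1)$-set that is not a $t$-cover of $\mathcal{F}_2$ lies in at most $(k_2-t+1)\binom{n-t-2}{k_1-t-2}$ members of $\mathcal{F}_1$. This dichotomy gives $|\mathcal{F}_1|\le|\mathcal{S}_2|\binom{n-t-1}{k_1-t-1}+O(n^{k_1-t-2})$ (Lemma~\ref{not-contain}). Heavy $t$-sets are essentially the $t$-shadow of $\mathcal{S}_2$, so they lose the multiplicities that decide the product.

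\textbf{Your intersection claim is false for $t\ge 2$.} In $(\mathcal{A}(k_1,t),\mathcal{A}(k_2,t))$ the sets $[t]$ and $[3,t+2]$ each lie in about $2\binom{n-t-1}{k_i-t-1}$ members, yet they meet in only $t-2$ points. The correct statement lives one level up: $\mathcal{S}_1$ and $\mathcal{S}_2$ are cross $t$-intersecting families of $(t+1)$-sets (Fact~\ref{s-tt}).

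\textbf{Consequently (a)--(c) do not describe the extremal families.}
\begin{enumerate}[(1)]
\item For $t\ge 2$ the heavy $t$-sets of $\mathcal{A}(k,t)$ are all of $\binom{[t+2]}{t}$, not three sets through a $(t-1)$-set.
\item In $\mathcal{H}(k_1,t;X,Y)$ every $([t]\setminus\{i\})\cup\{y\}$ with $y\in Y$ lies in at least $\binom{n-t-1}{k_1-t-1}$ members. That is the same level at which the $t$-subsets of $[t+1]$ are heavy in $\mathcal{C}_1(k_1,t)$, so no single threshold $c$ yields (b) while still detecting (a).
\end{enumerate}

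\textbf{The case analysis must be exhaustive.} The theorem says \emph{every} maximizer is of one of three types, so your ``short analysis'' is where all the work lies. The paper proceeds as follows.
\begin{enumerate}[(1)]
\item It splits according to whether $\mathcal{S}_1$ and $\mathcal{S}_2$ are $t$-intersecting, using Fact~\ref{DRG} and the rigid structure \eqref{bbabb} when one is not.
\item It obtains sharp size bounds: $|\mathcal{S}_i|\le k_i-t+1$ around a common $t$-set; $\le t+2$ inside a $(t+2)$-set; $\le 2(k_i-t+1)$ or $(t+1)(k_i-t)+1$ against a singleton; $\le k_i+1$ against a pair; and sizes in $\{2,3,4\}$ when neither is $t$-intersecting.
\item It pins down the structure exactly in the potentially extremal configurations and kills the rest with the explicit inequalities of Section~\ref{3}.
\end{enumerate}
Your premise that every non-listed deviation loses $\Omega(n^{k_1+k_2-2t-2})$ is also false. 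For $t\ge 2$, the configuration $|M_f\cap M_g|=t+1$ in Lemma~\ref{m} loses only in lower-order terms. The paper handles it by computing the product exactly, not by absorbing $O(n^{k_1+k_2-2t-3})$ errors.

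\textbf{A secondary error in the counting lemma.} Your counting lemma puts the hypothesis on the wrong family, which is what makes the first reduction look hard. The branching runs over members of $\mathcal{F}_{3-i}$, since each member of $\mathcal{F}_i$ is a $t$-cover of $\mathcal{F}_{3-i}$. So it bounds $|\mathcal{F}_i|$ by $\binom{k_{3-i}}{t}k_{3-i}^{s-t}\binom{n-s}{k_i-s}$ when $\tau_t(\mathcal{F}_{3-i})\ge s$.

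As you stated it, the lemma fails. Take $t=1$, $k_1\ge k_2\ge 3$, $\mathcal{F}_2=\{G_1,G_2\}$ with $G_1\cap G_2=\emptyset$, and let $\mathcal{F}_1$ be all $k_1$-sets meeting both. Then $\tau_1(\mathcal{F}_1)=k_2$, but $|\mathcal{F}_1|=\Theta(n^{k_1-2})$.

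With the corrected lemma, $\tau_t(\mathcal{F}_1)\ge t+2$ gives $|\mathcal{F}_2|=O(n^{k_2-t-2})$. Also $\tau_t(\mathcal{F}_2)\ge t+1$ gives $|\mathcal{F}_1|=O(n^{k_1-t-1})$, not merely $O(n^{k_1-t})$. The product is then of lower order, and this case is settled directly, as in Lemma~\ref{cfcg-t+2}; no ``finer split'' is needed.

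Your replacement claim does not hold either. With a fixed threshold $c$, the absence of a heavy $t$-set does not force $|\mathcal{F}_1|=O(n^{k_1-t-2})$. You only get $|\mathcal{F}_1|\le\binom{\tau_t(\mathcal{F}_1)}{t}c\binom{n-t-1}{k_1-t-1}$, unless you first prove the $(t+1)$-set dichotomy described above.
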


To prove Theorem \ref{main-1}, in Section \ref{2}, we focus on studying the structures of cross $t$-intersecting families $\mathcal{F}_1$ and $\mathcal{F}_2$ with $\tau_t(\mathcal{F}_1)=\tau_t(\mathcal{F}_2)=t+1$. Note that if $\mathcal{F}_1=\mathcal{F}_2=\mathcal{F}$, then $\mathcal{F}$ is a $t$-intersecting family. By applying Lemmas \ref{m} and \ref{equivalent}, we characterize all the maximal $t$-intersecting families $\mathcal{F}$ with $\tau_t(\mathcal{F})=t+1$. To this end, we introduce the following notation.
Let $r$, $k_1$, $k_2$, \dots, $k_r$ be positive integers. We say that $\mathcal{F}_1\subseteq {[n]\choose k_1}$,~$\mathcal{F}_2\subseteq {[n]\choose k_2}$,~$\dots$,~ $\mathcal{F}_r\subseteq {[n]\choose k_r}$ are \emph{pairwise cross $t$-intersecting} if $|F_i\cap F_j|\geq t$ for all $F_i\in \mathcal{F}_i$ and $F_j\in \mathcal{F}_j$ with distinct $i,j\in [r]$.
They are said to be \emph{maximal} if $\mathcal{F}_1=\mathcal{F}'_1$, $\mathcal{F}_2=\mathcal{F}'_2$, \dots, $\mathcal{F}_r=\mathcal{F}'_r$ for every collection of pairwise cross $t$-intersecting families $\mathcal{F}'_1$, $\mathcal{F}'_2$, $\dots$, $\mathcal{F}'_r$ such that $\mathcal{F}_1\subseteq\mathcal{F}'_1$, $\mathcal{F}_2\subseteq\mathcal{F}'_2$, $\dots$, $\mathcal{F}_r\subseteq\mathcal{F}'_r$. When $t=1$, we simply say that they are pairwise cross intersecting.
Related studies on pairwise cross $t$-intersecting families can be found in \cite{Borg-2009, Hilton-1977, Huang-Peng-2025, Li-Zhang-2025, Shi-Frankl-Qian-2022, Wang-Zhang-2011, Xi-Kong-Ge-2024}. We now present our second main result.

\begin{thm}\label{main-2}
Let $n$, $k$ and $t$ be positive integers with $n\geq k\geq t+2$. Let $\mathcal{F}\subseteq{[n]\choose k}$ be a maximal $t$-intersecting family such that $\tau_t(\mathcal{F})=t+1$. Then $\mathcal{F}$ is isomorphic to one of
the following families:
\begin{enumerate}[{\rm(i)}]
  \item $\mathcal{F}^{\prime}=\mathcal{A}(k,t)$;
  \item $\mathcal{F}'=\mathcal{H}(k,t; [t+1,k+1],[t+1,k+1])$;
  \item $\mathcal{F}^{\prime}= \bigcup\limits_{i\in[t+1]} \left\{A\cup ([t+1]\backslash \{i\}) : A\in \mathcal{A}_i\right\}\bigcup \big[[t+1]\big\rangle_k$, where $\mathcal{A}_i\subseteq {[t+2,n]\choose k-t}$, $i=1,\dots, t+1$ are maximal pairwise cross intersecting with at least two of them being non-empty;
  \item $\mathcal{F}^{\prime}=\left\{F\in {[n]\choose k} : [t]\subseteq F,~|F\cap [m]|\geq t+1\right\}
      \bigcup\left\{A\cup[t] : A\in\mathcal{A}\right\}\bigcup\big\{B\cup\big([m]\setminus \{i\}\big) : i\in [t],~B\in\mathcal{B}\big\},$ where $m\in [t+2,k]$, and $\mathcal{A} \subseteq {[m+1,n]\choose k-t}$ and $\mathcal{B}\subseteq {[m+1,n]\choose k-m+1}$ are a pair of maximal cross intersecting families with $\mathcal{B}\neq \emptyset$.
\end{enumerate}
\end{thm}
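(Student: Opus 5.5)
We take a $(t+1)$-set $T$ with $|F\cap T|\ge t$ for all $F\in\mathcal{F}$; it exists since $\tau_t(\mathcal{F})=t+1$. Relabel so that $T=[t+1]$. Every $F\in\mathcal{F}$ then either contains $[t+1]$ or equals $A\cup([t+1]\setminus\{i\})$ with $A\subseteq[t+2,n]$, $|A|=k-t$. Set $\mathcal{A}_i=\{F\setminus[t+1]: F\in\mathcal{F},\ F\cap[t+1]=[t+1]\setminus\{i\}\}$.

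First, maximality forces $[[t+1]\rangle_k\subseteq\mathcal{F}$. Indeed, any $G\supseteq[t+1]$ meets each member of $\mathcal{F}$ in at least $t$ points of $[t+1]$, so $G$ can be added. Second, the $t$-intersecting condition on $\mathcal{F}\setminus[[t+1]\rangle_k$ says exactly this: for $i\neq j$, a member of type $i$ and a member of type $j$ share only $t-1$ points of $[t+1]$, so $\mathcal{A}_i,\mathcal{A}_j$ must be cross intersecting. Within one $\mathcal{A}_i$ there is no constraint. Conversely, any such data gives a $t$-intersecting family. Hence the maximal $t$-intersecting families with $t$-cover $[t+1]$ correspond to the maximal tuples $(\mathcal{A}_1,\dots,\mathcal{A}_{t+1})$ of pairwise cross intersecting $(k-t)$-families on $[t+2,n]$. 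Presumably this is the content of Lemmas \ref{m} and \ref{equivalent}, which I would invoke here.

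The condition $\tau_t(\mathcal{F})=t+1$ (rather than $t$) says that at least two $\mathcal{A}_i$ are non-empty. If exactly one is empty, maximality then forces $\mathcal{A}_i={[t+2,n]\choose k-t}$ for that index, since then no constraint acts on it. The remaining task is to normalize the description up to isomorphism. The subtlety is that $\mathcal{F}$ may have several $t$-covers of size $t+1$, and the same family can look different under different choices of $T$. I would split on the set $\mathcal{T}$ of all $(t+1)$-subsets that are $t$-covers.

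\emph{Case (a):} some two covers $T,T'$ have $|T\cup T'|=t+2$, and their union contains $t+1$ or more covers. Then every $(t+1)$-subset of $[t+2]$ is a cover, which forces $\mathcal{F}\subseteq\mathcal{A}(k,t)$, and maximality gives (i).

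\emph{Case (b):} exactly two covers lie in a common $(t+2)$-set. Write them as $[t]\cup\{t+1\}$ and $[t]\cup\{x\}$. Then all members contain $[t]$, except those meeting both covers in exactly $t$ points, and analyzing the $\mathcal{A}_i$ yields (iv) (with $m$ the size of the "core" $[m]$) or (ii), the latter being the case $m=k+1$, effectively a Hilton--Milner type family.

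\emph{Case (c):} otherwise we stay in the generic description (iii).

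To make the split canonical, I would choose $T$ and define $m$ as the largest $m$ such that $[t]\subseteq F$ holds for all members of types $i\le t$ except those containing $[m]\setminus\{i\}$. When exactly the $\mathcal{A}_i$ with $i\le t$ are "large", they are forced to be stars on the set $[t+2,m]$, and the pair $(\mathcal{A}_{t+1},\mathcal{B})$ must be maximal cross intersecting.

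The main obstacle is showing that the cases are exhaustive and that the families (ii) and (iv) really are special instances of (iii) rewritten with a different cover, or conversely that every instance of (iii) not covered by (i), (ii), (iv) is genuinely of form (iii). The statement lists them separately, so I suspect the authors pick $T$ to maximize some parameter. For instance, they might take $T$ to be a $t$-cover that is contained in the largest number of members of $\mathcal{F}$, or one such that some $\mathcal{A}_i$ is empty. Then (iii) is the case where the tuple has at least two non-empty members under every choice, and (iv) arises when one $\mathcal{A}_i$ is empty, so $\mathcal{A}_i$ is full and $\mathcal{F}$ contains $[[t]\setminus\{i\}\ldots]$. I would first try to verify directly that the general description already equals the union of (i)–(iv) and treat the listing as a presentation choice.
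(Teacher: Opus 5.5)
Your first two paragraphs already prove the theorem as stated, so present them as the proof, not as a preliminary. Form (iii) only asks that the $\mathcal{A}_i$ be maximal pairwise cross intersecting with at least two non-empty, and this is universal. Relabel any $t$-cover of size $t+1$ as $[t+1]$. You have shown:
\begin{itemize}
\item $\big[[t+1]\big\rangle_k\subseteq\mathcal{F}$ by maximality;
\item every other member is $A\cup([t+1]\setminus\{i\})$ with $A\in\mathcal{A}_i$;
\item the $\mathcal{A}_i$ are pairwise cross intersecting;
\item the tuple is maximal, since by your converse remark an enlargement would give a strictly larger $t$-intersecting family;
\item $\tau_t(\mathcal{F})=t+1$ forces two $\mathcal{A}_i$ to be non-empty.
\end{itemize}
This is essentially the paper's Case~2.2. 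The paper runs that argument only when the set $\mathcal{T}$ of all $(t+1)$-covers is a singleton, but nothing in it uses uniqueness. Hence every family arising in (i), (ii) or (iv) is also of form (iii). For example, $\mathcal{A}(k,t)$ is (iii) with every $\mathcal{A}_i=\{A\in{[t+2,n]\choose k-t} : t+2\in A\}$. So no normalization is needed. Lemmas~\ref{m} and \ref{equivalent} play no role in your correspondence; the paper uses them only to identify (i) and (ii).

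The paper instead splits on $\mathcal{T}$, using that $\mathcal{T}$ is $t$-intersecting (\cite[Lemma~2.1]{Cao-set}) and Fact~\ref{DRG}. If $\tau_t(\mathcal{T})=t+1$, you get (i). Otherwise $\mathcal{T}=[T,M]_{t+1}$ for some $t$-set $T$, and the cases $|\mathcal{T}|=k-t+1$, $|\mathcal{T}|=1$ and $2\le|\mathcal{T}|\le k-t$ give (ii), (iii) and (iv) respectively, with $m=|M|$. What this buys is a finer classification in which the applicable form is read off from $\mathcal{T}$; the statement does not require it.

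Your cases (a)--(c) aim at that refinement, but as written they fail:
\begin{itemize}
\item You never prove $\mathcal{T}$ is $t$-intersecting, so ``otherwise'' in (c) need not mean $|\mathcal{T}|=1$.
\item The threshold in (a) should be three covers inside a $(t+2)$-set $M_0$, not $t+1$. Three distinct $(t+1)$-subsets of $M_0$ meet in only $t-1$ points, which forces $|F\cap M_0|\ge t+1$ for every $F\in\mathcal{F}$.
\item For $t=1$ your hypothesis in (a) is automatic and the conclusion false. In the Hilton--Milner family (ii) with $n\ge k+2$, $\{1,2\}$ and $\{1,3\}$ are covers, but $\{2,3\}$ misses the member $\{1\}\cup[4,k+2]$.
\item The claim that an empty $\mathcal{A}_i$ is forced to be full is confused, since an empty $\mathcal{A}_i$ is still constrained by the non-empty ones.
\item Family (iv) does not come from an empty $\mathcal{A}_i$. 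Relative to the cover $[t+1]$, all its $\mathcal{A}_i$ are non-empty; what singles it out is that the union of all covers has size $m\in[t+2,k]$.
\end{itemize}
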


The rest of this paper is organized as follows.
In Section \ref{2}, we study cross
$t$-intersecting families $\mathcal{F}$ and $\mathcal{G}$ with $\tau_t(\mathcal{F})=\tau_t(\mathcal{G})=t+1$, characterizing their structures or estimating the product of their sizes. In Section \ref{2.2}, we prove Theorems \ref{main-1} and \ref{main-2}. In Section \ref{3}, we establish several inequalities used throughout the paper.

\section{$\tau_t(\mathcal{F})=\tau_t(\mathcal{G})=t+1$} \label{2}

In this section, we always assume that $n$, $k$, $\ell$ and $t$ are positive integers satisfying $\min\{k, \ell\}\geq t+1$ and $n\geq (t+1)^2(k+\ell)^2(k-t+1)(\ell-t+1)+k+\ell-t$.
Let $\mathcal{F} \subseteq \binom{[n]}{k}$ and $\mathcal{G} \subseteq \binom{[n]}{\ell}$ be maximal cross $t$-intersecting families with $\tau_t(\mathcal{F})=\tau_t(\mathcal{G})=t+1$. Let $\mathcal{T}_f$ and $\mathcal{T}_g$ denote the collections of all $t$-covers of $\mathcal{F}$ and $\mathcal{G}$ with size $t+1$, respectively.
Denote $M_f = \cup_{A \in \mathcal{T}_f} A$ and $M_g = \cup_{B \in \mathcal{T}_g} B$.
Based on the maximality of $\mathcal{F}$ and $\mathcal{G}$, it is clear that
\begin{align}\label{abs}
\bigcup_{B\in \mathcal{T}_g}[B\rangle_k\subseteq \mathcal{F}~\text{and}~\bigcup_{A\in \mathcal{T}_f}[A\rangle_\ell\subseteq \mathcal{G}.
\end{align}
For $S \subseteq [n]$, define $\mathcal{F}_S=\big\{F\in \mathcal{F} : S\subseteq F\big\}$.
The following facts will be used frequently.
\begin{fact}{\rm (\cite[Corollary~9.1.4]{BCN})}\label{DRG}
If $\mathcal{T}\subseteq {[n]\choose t+1}$ is a maximal t-intersecting family, then $\mathcal{T}\simeq {[t+2]\choose t+1}$ or $\mathcal{T}\simeq\big[[t]\big\rangle_{t+1}$.
\end{fact}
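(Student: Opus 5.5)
The plan is to argue directly from the structure of pairwise intersections, splitting according to whether the members of $\mathcal{T}$ share a common $t$-subset. The basic observation is this: two distinct $(t+1)$-sets $A,B$ with $|A\cap B|\geq t$ satisfy $|A\cap B|=t$ and $|A\cup B|=t+2$. All later steps reduce to counting which elements a third member is allowed to contain.

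\emph{Case 1: all members of $\mathcal{T}$ contain a common $t$-set $S$.} This case includes $|\mathcal{T}|\le 1$, where we pick any $t$-subset $S$ of the unique member. Then $\mathcal{T}\subseteq [S\rangle_{t+1}$. Any two members of $[S\rangle_{t+1}$ meet in at least $t$ elements, so $[S\rangle_{t+1}$ is $t$-intersecting. Maximality then forces $\mathcal{T}=[S\rangle_{t+1}\simeq\big[[t]\big\rangle_{t+1}$.

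\emph{Case 2: no $t$-set lies in every member.} Then $|\mathcal{T}|\geq 2$. Fix distinct $A,B\in\mathcal{T}$ and put $S=A\cap B$ (so $|S|=t$) and $U=A\cup B$ (so $|U|=t+2$). By assumption there is $C\in\mathcal{T}$ with $S\not\subseteq C$.

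First we show $C\subseteq U$. Suppose $C$ contained some $x\notin U$. Then $|C\cap A|\ge t$ together with $x\notin A$ and $|C|=t+1$ forces $C\cap A=C\setminus\{x\}$. Likewise $C\cap B=C\setminus\{x\}$. Hence $C\setminus\{x\}\subseteq S$, and since both sets have size $t$, $C\setminus\{x\}=S$. This contradicts $S\not\subseteq C$.

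So $C$ is a $(t+1)$-subset of the $(t+2)$-set $U$, i.e.\ $C=U\setminus\{y\}$ for a single $y\in U$. Since $S\not\subseteq C$, we have $y\in S$, and therefore $|A\cap B\cap C|=|S\setminus\{y\}|=t-1$.

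Next, let $D\in\mathcal{T}$ be arbitrary. If $D$ contained some $x\notin U$, the argument above, applied to $A$, $B$ and $C$ in turn, gives $D\setminus\{x\}\subseteq A\cap B\cap C$. This is impossible because $|D\setminus\{x\}|=t>t-1$. Hence $\mathcal{T}\subseteq\binom{U}{t+1}$. Any two $(t+1)$-subsets of a $(t+2)$-set meet in exactly $t$ elements, so $\binom{U}{t+1}$ is itself $t$-intersecting. Maximality then yields $\mathcal{T}=\binom{U}{t+1}\simeq\binom{[t+2]}{t+1}$.

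There is no serious obstacle here. The one point requiring care is the choice of the third set $C$ in Case 2. Its role is to shrink the common intersection from $t$ to $t-1$, which is exactly what excludes every element outside $U$. A smaller point is the degenerate situation $|\mathcal{T}|\le1$, which is absorbed into Case 1.
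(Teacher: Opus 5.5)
Your proof is correct, and it is self-contained where the paper is not. The paper gives no argument for this fact. It quotes it from Brouwer--Cohen--Neumaier, where it is the classification of maximal cliques of the Johnson graph $J(n,t+1)$. A $t$-intersecting subfamily of $\binom{[n]}{t+1}$ is exactly a clique there, since distinct members meet in exactly $t$ points.

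You replace the citation with an elementary case split. Either some $t$-set $S$ lies in every member, and maximality gives $\mathcal{T}=[S\rangle_{t+1}$. Or you find $A,B,C\in\mathcal{T}$ with $C\subseteq A\cup B$ and $|A\cap B\cap C|=t-1$. Then any member $D$ containing a point $x\notin A\cup B$ would force the $t$-set $D\setminus\{x\}$ into $A\cap B\cap C$, which is impossible. So $\mathcal{T}\subseteq\binom{A\cup B}{t+1}$, and maximality finishes.

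The citation buys brevity and places the fact within the theory of distance-regular graphs. Your version removes the external dependence at the cost of a few lines. It also uses the same mechanism the paper relies on elsewhere, for example in Case~1 of the proof of Theorem~\ref{main-2}, where three covers with $|A_1\cap A_2\cap A_3|\le t-1$ rule out $|F\cap M_0|=t$.
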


\begin{fact}{\rm(\cite[Lemma~2.1]{Cao-Lu-Lv-Wang-2024})} \label{s-tt}
  $\mathcal{T}_f$ and $\mathcal{T}_g$ are cross $t$-intersecting families.
\end{fact}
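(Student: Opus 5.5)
The plan is to argue by contradiction, using the maximality of $\mathcal{F}$ and $\mathcal{G}$ through the inclusions \eqref{abs} and then choosing a member of $[B\rangle_k$ that avoids $A$ as much as possible. The key point is that a $t$-cover of one family forces a whole star into the other family by maximality, and that star is too large to be $t$-covered by a small set unless that set already meets the star's centre in $t$ points.

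First I would record why \eqref{abs} holds. Let $B\in\mathcal{T}_g$ and let $F\in[B\rangle_k$ (this is meaningful because $k\ge t+1=|B|$). For every $G\in\mathcal{G}$ we have $|F\cap G|\ge |B\cap G|\ge t$, since $B$ is a $t$-cover of $\mathcal{G}$. Hence $\mathcal{F}\cup[B\rangle_k$ and $\mathcal{G}$ are still cross $t$-intersecting, and maximality gives $[B\rangle_k\subseteq\mathcal{F}$. The symmetric argument gives $[A\rangle_\ell\subseteq\mathcal{G}$ for every $A\in\mathcal{T}_f$.

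Now fix $A\in\mathcal{T}_f$ and $B\in\mathcal{T}_g$, and suppose $|A\cap B|\le t-1$. I would build a $k$-set $F$ that contains $B$ and meets $A$ only inside $B$. To do this, take $B$ together with $k-t-1$ further elements of $[n]\setminus(A\cup B)$. This choice is possible because $|A\cup B|\le 2t+2$, so
\[
n-|A\cup B|\ \ge\ n-2t-2\ \ge\ k-t-1,
\]
which is implied by the assumed lower bound on $n$ (that bound is enormous compared with $k+t+1$). Then $F\in[B\rangle_k\subseteq\mathcal{F}$ and $F\cap A=A\cap B$. It follows that $|F\cap A|\le t-1<t$, contradicting that $A$ is a $t$-cover of $\mathcal{F}$. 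Therefore $|A\cap B|\ge t$ for all $A\in\mathcal{T}_f$ and $B\in\mathcal{T}_g$, which is exactly the claim.

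There is no real obstacle here. The only points needing care are to invoke the maximality of $(\mathcal{F},\mathcal{G})$, which is assumed throughout this section, to justify \eqref{abs}, and to check the trivial room condition on $n$ together with $k\ge t+1$ so that the set $F$ exists.
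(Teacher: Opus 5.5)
Your proof is correct. Maximality puts the whole star $[B\rangle_k$ into $\mathcal{F}$, and if $|A\cap B|\leq t-1$, a member $F\supseteq B$ whose other elements avoid $A$ (such an $F$ exists once $n\geq k+t+1$) would have $|F\cap A|=|A\cap B|\leq t-1$, contradicting that $A$ is a $t$-cover of $\mathcal{F}$. The paper itself gives no proof and only cites Lemma~2.1 of Cao, Lu, Lv and Wang; your maximality argument is the standard one for this kind of lemma, though the paper does not let me confirm it is the cited proof.
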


Let $n$, $x$, $y$, $m$ and $t$ be positive integers. Write
\begin{align}
g(m,x,y,t)&=m{n-t-1\choose x-t-1}+y(t+1)(y-t+1){n-t-2\choose x-t-2},\label{equ-3}\\
a(x,t)&=(t+2){n-t-2\choose x-t-1}+{n-t-2\choose x-t-2}, \label{equ-1}\\
c_2(x,y,t)&=(t+1)\left({n-t-1\choose x-t}-{n-y-1\choose x-t}\right)+{n-t-1\choose x-t-1},\label{equ-11}\\
c_1(y,t)&={n-t-1\choose y-t-1}+t+1,\label{equ-12}\\
h(x,y,t)&={n-t\choose x-t}-{n-y-1\choose x-t}+t. \label{equ-2}
\end{align}

\begin{rema}\label{remark-1}
Direct computation gives that $|\mathcal{A}(k,t)|=a(k,t)$, $|\mathcal{B}(k;(a_1,a_2,a_3,a_4))|=a(k,1)$, $|\mathcal{C}_2(k,t;\ell)|=c_2(k,\ell,t)$, $|\mathcal{C}_1(\ell,t)|=c_1(\ell,t)$,
and $|\mathcal{H}(k,t;X,Y)| = h(k,\ell,t)$ in Constructions \ref{Con1}--\ref{Con4}.
\end{rema}

First, we establish upper bounds for $|\mathcal{F}|$ and $|\mathcal{G}|$, whose leading coefficients depend on $|\mathcal{T}_g|$ and $|\mathcal{T}_f|$, respectively.

\begin{lem}\label{not-contain}
We have $|\mathcal{F}|\leq g(|\mathcal{T}_g|,k,\ell,t)$ and $
|\mathcal{G}|\leq g(|\mathcal{T}_f|,\ell,k,t).$
\end{lem}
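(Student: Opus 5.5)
By symmetry it suffices to prove $|\mathcal{F}|\leq g(|\mathcal{T}_g|,k,\ell,t)$. We split $\mathcal{F}$ according to whether a member contains some $t$-cover of $\mathcal{G}$ of size $t+1$:
\[
\mathcal{F}=\Big(\bigcup_{B\in\mathcal{T}_g}\mathcal{F}_B\Big)\cup\mathcal{F}',\qquad \mathcal{F}'=\{F\in\mathcal{F}: B\not\subseteq F \text{ for all } B\in\mathcal{T}_g\}.
\]
The first part is trivially bounded by $\sum_{B\in\mathcal{T}_g}|[B\rangle_k|=|\mathcal{T}_g|{n-t-1\choose k-t-1}$. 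This is exactly the first term of $g$. It therefore remains to show
\[
|\mathcal{F}'|\leq \ell(t+1)(\ell-t+1){n-t-2\choose k-t-2}.
\]

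To bound $\mathcal{F}'$, we use that $\mathcal{F}$ is $t$-intersected by every member of $\mathcal{G}$. Since $\tau_t(\mathcal{G})=t+1$, fix some $B_0\in\mathcal{T}_g$. Every $F\in\mathcal{F}$ satisfies $|F\cap G|\geq t$ for all $G\in\mathcal{G}$. In particular, since $\tau_t(\mathcal{G})\geq t+1$, no $t$-subset of $F$ is a $t$-cover of $\mathcal{G}$. Thus for every $F$ and every $t$-set $S$, there is some $G\in\mathcal{G}$ with $|G\cap S|<t$.

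The key branching argument runs as follows. Fix any $G_0\in\mathcal{G}$. Every $F\in\mathcal{F}'$ meets $G_0$ in at least $t$ elements, so $F$ contains a $t$-subset $S\subseteq G_0$, giving ${\ell\choose t}$ choices. Since $S$ is not a $t$-cover of $\mathcal{G}$, pick $G_1\in\mathcal{G}$ with $|G_1\cap S|\leq t-1$. Then $F$ must contain an element $x\in G_1\setminus S$. This gives at most $\ell$ choices, producing a $(t+1)$-set $S\cup\{x\}\subseteq F$.

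If $S\cup\{x\}$ were a $t$-cover of $\mathcal{G}$, it would lie in $\mathcal{T}_g$, which is impossible for $F\in\mathcal{F}'$. So there is some $G_2$ with $|G_2\cap(S\cup\{x\})|\leq t-1$, and $F$ contains a further element of $G_2$. This yields a $(t+2)$-set inside $F$ and the bound ${n-t-2\choose k-t-2}$ per branch.

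This naive count gives a constant ${\ell\choose t}\ell^2$, which is too large. The real work is sharpening the constant to $\ell(t+1)(\ell-t+1)$. For this I would start from the cover $B_0\in\mathcal{T}_g$ instead of $G_0$. Since $|F\cap B_0|\geq$ ... is not guaranteed for $F\in\mathcal{F}$, I would use $\mathcal{T}_f$ via Fact \ref{s-tt}, or use $G_0$ together with $B_0$ through \eqref{abs}: every member of $[B_0\rangle_\ell$ lies in... Concretely, \eqref{abs} says $[A\rangle_\ell\subseteq\mathcal{G}$ for each $A\in\mathcal{T}_f$. Hence each $F\in\mathcal{F}$ meets every $\ell$-superset of $A$ in $t$ elements, and for large $n$ this forces $|F\cap A|\geq t$.

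So I take $A\in\mathcal{T}_f$ (size $t+1$). Each $F$ contains one of the $t+1$ $t$-subsets $S$ of $A$, giving the factor $(t+1)$. Choosing $G_1\in\mathcal{G}$ with $|G_1\cap S|\leq t-1$, $F$ must contain at least one element of $G_1\setminus S$, of which there are at most $\ell-t+1$; this gives the factor $(\ell-t+1)$. The resulting $(t+1)$-set is not in $\mathcal{T}_g$, so some $G_2$ meets it in at most $t-1$ elements, and $F$ picks one of at most $\ell$ elements of $G_2$; this gives the factor $\ell$. The remaining $k-t-2$ elements are free, giving ${n-t-2\choose k-t-2}$.

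The main obstacle is verifying the step $|F\cap A|\geq t$ and handling the choice of $G_1$ so that $|G_1\setminus S|\leq \ell-t+1$. This needs $|G_1\cap S|=t-1$ exactly, and I would pick $G_1$ from $[A\rangle_\ell$-type sets or argue with minimality to ensure this.
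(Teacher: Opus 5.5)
Your skeleton matches the paper's. You split off $\bigcup_{B\in\mathcal{T}_g}\mathcal{F}_B$ and then bound $\mathcal{F}'$ by branching three times: first over the $t+1$ $t$-subsets of some $A\in\mathcal{T}_f$, then twice using members of $\mathcal{G}$ that fail to $t$-cover the current set. At the last branching, $F\in\mathcal{F}'$ guarantees the $(t+1)$-set is not in $\mathcal{T}_g$. The step $|F\cap A|\ge t$, which you list as an obstacle, is immediate. $\mathcal{T}_f$ is by definition the family of $t$-covers of $\mathcal{F}$ of size $t+1$, so you need neither \eqref{abs} nor large $n$ there.

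\textbf{The gap.} The genuine gap is the factor $\ell-t+1$ at your second branching. It requires $|G_1\cap S|=t-1$, and you do not show that such a $G_1\in\mathcal{G}$ exists. Your suggested way of finding it cannot work: every member of $[A\rangle_\ell$ contains $A\supseteq S$, so it meets $S$ in $t$ elements, not $t-1$. For $t\ge 2$, nothing in your argument rules out that every $G\in\mathcal{G}$ with $|G\cap S|\le t-1$ has $w=|G\cap S|\le t-2$. In that case $|G_1\setminus S|=\ell-w>\ell-t+1$, and the count as written fails.

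\textbf{The missing idea.} You need to handle general $w$. Then $F$ must contain $t-w$ elements of $G_1\setminus S$, so there are at most $\binom{\ell-w}{t-w}\binom{n-2t+w}{k-2t+w}$ such $F$. By Lemma \ref{notknow}~(i) with $s=t$, this is increasing in $w$ for large $n$. Hence for $w\le t-2$ you already land in a $(t+2)$-set, with at most $\binom{\ell-t+2}{2}\binom{n-t-2}{k-t-2}\le \ell(\ell-t+1)\binom{n-t-2}{k-t-2}$ members. Only the case $w=t-1$ needs your third branching.

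\textbf{How the paper does it.} The paper avoids the issue by reversing the order of the two factors. At the second step it uses only $|G_1\setminus U_1|\le \ell$, which holds for any $w$. It puts the sharp factor at the last step: for $|G_2\cap U_2|=w\le t-1$ it bounds $|\mathcal{F}'_{U_2}|\le\binom{\ell-w}{t-w}\binom{n-2t-1+w}{k-2t-1+w}$ via \cite[Lemma~2.7]{Cao-Lu-Lv-Wang-2024}. It then maximizes at $w=t-1$ using Lemma \ref{notknow}~(i), obtaining $(\ell-t+1)\binom{n-t-2}{k-t-2}$. Either way, the monotonicity-in-$w$ estimate is the ingredient your proposal lacks.
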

\proof By the symmetry of $\mathcal{F}$ and $\mathcal{G}$, we only prove $|\mathcal{F}|\leq g(|\mathcal{T}_g|,k,\ell,t)$.
Let $\mathcal{F}^{\prime}$ be the subfamily of $\mathcal{F}$ consisting of those members that contain no element of $\mathcal{T}_g$. From \eqref{abs}, it is obvious that $\mathcal{F}=\big(\cup_{B\in \mathcal{T}_g}\mathcal{F}_B\big)\cup~ \mathcal{F}'$, and so
\begin{align}\label{F'}
|\mathcal{F}|\leq |\mathcal{T}_g|{n-t-1\choose k-t-1}+ |\mathcal{F}^{\prime}|.
\end{align}
If $\mathcal{F}'=\emptyset$, the inequality holds trivially. In the following, assume that $\mathcal{F}'\neq\emptyset$. If $k=t+1$, then $\mathcal{F}=\mathcal{T}_g$, and so
$\mathcal{F}^{\prime}=\emptyset$, a contradiction. Hence $k\geq t+2$.

Let $A_0\in \mathcal{T}_f$.
Then
$\mathcal{F}^{\prime}= \cup_{U\in {A_0\choose t}}\mathcal{F}^{\prime}_U$.
Let $U_1\in{A_0\choose t}$ with $|\mathcal{F}^{\prime}_{U_1}|= \max\big\{|\mathcal{F}^{\prime}_{U}| : U\in {A_0\choose t}\big\}$. Then
$$|\mathcal{F}^{\prime}|\leq (t+1)|\mathcal{F}^{\prime}_{U_1}|.$$

Since $\tau_t(\mathcal{G})= t+1$, there exists $G_1\in \mathcal{G}$ with $|G_1\cap U_1|\leq t-1$. For each $F\in \mathcal{F}^{\prime}_{U_1}$, it follows from $|F\cap G_1|\geq t$ that $|F\cap (U_1\cup G_1)|\geq t+1$. Then
$$\mathcal{F}^{\prime}_{U_1}\subseteq \bigcup_{R\in {U_1\cup G_1\choose t+1},~U_1\subseteq R}\mathcal{F}^{\prime}_R.$$
Let $U_2\in{U_1\cup G_1\choose t+1}$ such that $U_1\subseteq U_2$ and $|\mathcal{F}^{\prime}_{U_2}|=\max\big\{|\mathcal{F}^{\prime}_R| : R\in {U_1\cup G_1\choose t+1},~U_1\subseteq R\big\}$. Then
$$|\mathcal{F}^{\prime}_{U_1}|\leq \ell|\mathcal{F}^{\prime}_{U_2}|.$$

Since every member of $\mathcal{F}'$ avoids all sets in $\mathcal{T}_g$, we have $U_2\notin \mathcal{T}_g$, and then there exists $G_2\in \mathcal{G}$ with $|G_2\cap U_2|=w\leq t-1$. It follows from \cite[Lemma~2.7]{Cao-Lu-Lv-Wang-2024} and Lemma \ref{notknow}~(i) that
$$|\mathcal{F}^{\prime}_{U_2}|\leq {\ell-w\choose t-w}{n-(t+1)-t+w\choose k-(t+1)-t+w}\leq (\ell-t+1){n-t-2\choose k-t-2}.$$
Therefore
$$|\mathcal{F}^{\prime}|\leq \ell(t+1)(\ell-t+1){n-t-2\choose k-t-2},$$
and so
$|\mathcal{F}|\leq
g(|\mathcal{T}_g|,k,\ell,t)$
from \eqref{F'}, which completes the proof.$\qed$

As shown in Lemma \ref{not-contain}, $|\mathcal{T}_g|$ directly determines the leading term of the inequality. In particular, when $n$ is sufficiently large, the leading term dominates the bound in magnitude, and the second term becomes negligible by comparison. Thus, $|\mathcal{T}_g|$ plays a central role in determining the upper bound of  $|\mathcal{F}|$.

Next, we divide the discussion into the following cases: both $\mathcal{T}_f$ and $\mathcal{T}_g$ are $t$-intersecting families; exactly one of them is $t$-intersecting; and neither of them is $t$-intersecting.
By Fact \ref{s-tt}, we have $\tau_t(\mathcal{T}_f),\tau_t(\mathcal{T}_g)\in \{t,t+1\}$, and both $\mathcal{T}_f$ and $\mathcal{T}_g$ are $t$-intersecting if and only if $\mathcal{T}_f\cup \mathcal{T}_g$ is $t$-intersecting.

\begin{pro}\label{l+1_t+1_t}
If $\mathcal{T}_f$ is $t$-intersecting and $\ell=t+1$, then $\tau_t(\mathcal{T}_f)=t+1$, and $\mathcal{T}_g$ is $t$-intersecting.
\end{pro}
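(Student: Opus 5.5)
The plan is to exploit the fact that when $\ell=t+1$ the members of $\mathcal{G}$ are themselves $(t+1)$-sets, which forces $\mathcal{G}=\mathcal{T}_f$. Then Fact \ref{DRG} pins down $\mathcal{T}_f$ inside the $(t+1)$-subsets of a $(t+2)$-set, and a direct counting argument confines $\mathcal{T}_g$ there as well.

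\emph{Step 1: $\mathcal{G}=\mathcal{T}_f$.} Every $G\in\mathcal{G}$ has size $t+1$ and satisfies $|G\cap F|\geq t$ for all $F\in\mathcal{F}$, so $G$ is a $t$-cover of $\mathcal{F}$ of size $t+1$; hence $\mathcal{G}\subseteq\mathcal{T}_f$. Conversely, by \eqref{abs} and maximality, $\mathcal{G}\supseteq\bigcup_{A\in\mathcal{T}_f}[A\rangle_{t+1}=\mathcal{T}_f$, since $[A\rangle_{t+1}=\{A\}$. Therefore $\tau_t(\mathcal{T}_f)=\tau_t(\mathcal{G})=t+1$, which is the first claim.

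\emph{Step 2: locate $\mathcal{T}_f$.} Since $\mathcal{T}_f$ is $t$-intersecting, it lies in some maximal $t$-intersecting subfamily of ${[n]\choose t+1}$. By Fact \ref{DRG}, this maximal family is isomorphic either to ${[t+2]\choose t+1}$ or to $\big[[t]\big\rangle_{t+1}$.
\begin{enumerate}[(a)]
\item The second option is impossible, because every member of $\big[[t]\big\rangle_{t+1}$ contains a common $t$-set, which would give $\tau_t(\mathcal{T}_f)=t$.
\item So $\mathcal{T}_f\subseteq{S\choose t+1}$ for some $(t+2)$-set $S$.
\item Any two distinct members of ${S\choose t+1}$ share a common $t$-set. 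So $\tau_t(\mathcal{T}_f)=t+1$ forces $\mathcal{T}_f$ to contain at least three members $S\setminus\{a\}$, $S\setminus\{b\}$, $S\setminus\{c\}$ with $a,b,c$ distinct.
\end{enumerate}

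\emph{Step 3: $\mathcal{T}_g\subseteq{S\choose t+1}$.} Take $B\in\mathcal{T}_g$. Since $B$ is a $t$-cover of $\mathcal{G}=\mathcal{T}_f$ (this is also Fact \ref{s-tt}), it meets each of the three sets above in at least $t$ elements.
\begin{enumerate}[(a)]
\item Suppose $B\not\subseteq S$. Then $|B\cap S|\leq t$, so each of the three intersections must equal $B\cap S$ and have size exactly $t$.
\item This means $B\cap S$ is a $t$-set contained in $S\setminus\{a,b,c\}$.
\item But $|S\setminus\{a,b,c\}|=t-1$, a contradiction.
\end{enumerate}
Hence $B\subseteq S$, so $\mathcal{T}_g\subseteq{S\choose t+1}$, which is $t$-intersecting. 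This proves the second claim.

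There is no real obstacle. The only point needing care is Step 1, where the maximality of $(\mathcal{F},\mathcal{G})$ is used through \eqref{abs} to obtain the equality $\mathcal{G}=\mathcal{T}_f$ rather than a mere inclusion.
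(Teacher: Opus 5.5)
Your proof is correct and follows essentially the paper's route: maximality via \eqref{abs} gives $\mathcal{G}=\mathcal{T}_f$, Fact~\ref{DRG} places $\mathcal{T}_f$ inside ${S\choose t+1}$ for some $(t+2)$-set $S$, and any $B\in\mathcal{T}_g$ with $B\not\subseteq S$ would make $B\cap S$ a common $t$-subset of every member of $\mathcal{G}$, contradicting $\tau_t(\mathcal{G})=t+1$. Your choice of three explicit members $S\setminus\{a\}$, $S\setminus\{b\}$, $S\setminus\{c\}$ only unpacks this last step, which the paper states directly.
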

\proof Since $\ell=\tau_t(\mathcal{G})$, the maximality of $\mathcal{F}$ and $\mathcal{G}$ implies that $\mathcal{G}=\mathcal{T}_f$. Then $\tau_t(\mathcal{T}_f)=t+1$ and $\mathcal{G}$ is $t$-intersecting.
From Fact \ref{DRG} and $\tau_t(\mathcal{G})=t+1$, there exists some $M\in {[n]\choose t+2}$ such that $\mathcal{G}\subseteq {M\choose t+1}$. For each $B\in \mathcal{T}_g$ and each $G\in\mathcal{G}$, it is clear that $|B\cap M\cap G|\geq t$. Hence $|B\cap M|=t+1$ from $\tau_t(\mathcal{G})=t+1$ again. Therefore, $\mathcal{T}_g\subseteq {M\choose t+1}$ is $t$-intersecting. $\qed$

\begin{lem}\label{m}
Suppose that $\mathcal{T}_f\cup \mathcal{T}_g$ is a $t$-intersecting family with $\tau_t(\mathcal{T}_f\cup \mathcal{T}_g)=t$.
\begin{enumerate}[{\rm (i)}]
  \item $\min\{k,\ell\}\geq t+2$, $|\mathcal{T}_f|\leq k-t+1$ and $|\mathcal{T}_g|\leq \ell-t+1$.
  \item Assume that $(|\mathcal{T}_f|,|\mathcal{T}_g|)=(k-t+1,\ell-t+1)$. Then $|M_f\cap M_g|\geq t+1$, and the following statements hold.
      \begin{enumerate}
  \item[{\rm (iia)}] If $|M_f\cap M_g|= t+1$, then
  $|\mathcal{F}||\mathcal{G}|=(h(k,\ell,t)-t+1)(h(\ell, k,t)-t+1)$. Moreover, when $t=1$, we have $(\mathcal{F},\mathcal{G})\simeq (\mathcal{H}(k,1;X,Y), ~\mathcal{H}(\ell,1;Y,X))$, where $X\in {[2,n]\choose k}$ and $Y\in {[2,n]\choose \ell}$ satisfy $|X\cap Y|=1$;
  \item[{\rm (iib)}] If $|M_f\cap M_g|\geq t+2$, then
      $(\mathcal{F},\mathcal{G})\simeq (\mathcal{H}(k,t;X,Y), ~\mathcal{H}(\ell,t;Y,X))$, where $X\in {[t+1,n]\choose k-t+1}$ and $Y\in {[t+1,n]\choose \ell-t+1}$ satisfy $|X\cap Y|\geq 2$.
\end{enumerate}
\end{enumerate}
\end{lem}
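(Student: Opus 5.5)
Since $\mathcal{T}_f\cup\mathcal{T}_g$ is $t$-intersecting with $\tau_t=t$, all members of $\mathcal{T}_f\cup\mathcal{T}_g$ contain a common $t$-set, which after relabelling we take to be $[t]$. Thus $\mathcal{T}_f=\{[t]\cup\{x\}: x\in X\}$ and $\mathcal{T}_g=\{[t]\cup\{y\}:y\in Y\}$ for some $X,Y\subseteq[t+1,n]$, with $M_f=[t]\cup X$, $M_g=[t]\cup Y$, and $|M_f\cap M_g|=t+|X\cap Y|$.

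For (i), first note that $k=t+1$ is impossible. If $k=t+1$, then $\mathcal{F}=\mathcal{T}_g$, because every $k$-set of $\mathcal{F}$ is a $t$-cover of $\mathcal{G}$ (by \eqref{abs} and cross intersection). But $\mathcal{T}_g\subseteq[[t]\rangle_{t+1}$, so $\tau_t(\mathcal{F})=t$, a contradiction. The same argument excludes $\ell=t+1$. To bound $|X|$: since $\tau_t(\mathcal{F})=t+1$, some $F_0\in\mathcal{F}$ misses $[t]$ in at least one point. Every $A=[t]\cup\{x\}\in\mathcal{T}_f$ satisfies $|A\cap F_0|\ge t$, so $|F_0\cap[t]|=t-1$ and $X\subseteq F_0\setminus[t]$. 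Hence $|X|\le k-(t-1)=k-t+1$, and symmetrically $|Y|\le \ell-t+1$.

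For (ii), equality forces every such $F_0$ to equal $([t]\setminus\{i\})\cup X$. By $\tau_t(\mathcal{F})=t+1$ there is an $F\in\mathcal{F}$ with $|F\cap[t]|\le t-1$, and similarly a $G\in\mathcal{G}$ of the form $([t]\setminus\{j\})\cup Y$. The cross condition gives
\begin{equation*}
|F\cap G|\ge t \quad\text{with } F=([t]\setminus\{i\})\cup X,\ G=([t]\setminus\{j\})\cup Y .
\end{equation*}
If $i=j$ this yields $|X\cap Y|\ge 1$. If $i\ne j$ it yields $|X\cap Y|\ge 2$, and when $t=1$ only $i=j=1$ occurs. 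Either way $|M_f\cap M_g|\ge t+1$.

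The structure then follows. Let $I=\{i:([t]\setminus\{i\})\cup X\in\mathcal{F}\}$ and $J$ be defined analogously for $\mathcal{G}$. The members of $\mathcal{F}$ containing $[t]$ must meet every $G=([t]\setminus\{j\})\cup Y$ in at least $t$ points, so they meet $Y$. Conversely, maximality puts all of $\{F\supseteq[t]: F\cap Y\ne\emptyset\}$ into $\mathcal{F}$, since such sets $t$-intersect every member of $\mathcal{G}$: members of $\mathcal{G}$ either contain $[t]$ and meet $X$, or have the form $([t]\setminus\{j\})\cup Y$.

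In case (iib), $|X\cap Y|\ge2$, so every pair $(i,j)$ is compatible. Maximality then gives $I=J=[t]$, which yields exactly $\mathcal{H}(k,t;X,Y)$ and $\mathcal{H}(\ell,t;Y,X)$.

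In case (iia), $|X\cap Y|=1$, so compatibility forces $i=j$. Hence $I=J=\{i\}$ is a single index, and one must check that no $i'\neq i$ can be added. The sizes are then $h(k,\ell,t)-(t-1)$ and $h(\ell,k,t)-(t-1)$, giving the stated product. When $t=1$ this is exactly $\mathcal{H}$ with $|X\cap Y|=1$.

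The main obstacle is justifying that every member of $\mathcal{F}$ not containing $[t]$ has the form $([t]\setminus\{i\})\cup X$. The argument is to show that any $F$ with $|F\cap[t]|\le t-2$ cannot $t$-intersect all of $[t]\cup\{x\}$, and then to use $|X|=k-t+1$ to pin down $F$. One also has to handle $I$ and $J$ carefully using maximality in both directions.
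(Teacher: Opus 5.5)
Your proposal is correct and follows essentially the same route as the paper. You normalize the common $t$-set, show every member of $\mathcal{F}$ not containing $[t]$ is $([t]\setminus\{i\})\cup X$ (and symmetrically for $\mathcal{G}$), get $|X\cap Y|\geq 1$ or $2$ from a single cross pair, and then use maximality to include all of $\{F\supseteq[t] : F\cap Y\neq\emptyset\}$ and to determine the index sets $I,J$. The only deviation is that you rule out $k=t+1$ directly via $\mathcal{F}\subseteq\mathcal{T}_g\subseteq\big[[t]\big\rangle_{t+1}$ instead of citing Proposition~\ref{l+1_t+1_t}, which is equally valid.
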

\proof (i) It follows from Proposition~\ref{l+1_t+1_t} that $k\geq t+2$ and $\ell\geq t+2$.
Since $\tau_t(\mathcal{T}_f\cup \mathcal{T}_g)=t$, there exists a $t$-subset $T$ of $[n]$ such that  $\mathcal{T}_f\cup \mathcal{T}_g\subseteq [T\rangle_{t+1}$. It is clear that $\mathcal{T}_f=[T, M_f]_{t+1}$, $\mathcal{T}_g=[T,M_g]_{t+1}$, and
\begin{align}\label{TMTM}
(|\mathcal{T}_f|,|\mathcal{T}_g|)=(|M_f|-t,|M_g|-t).
\end{align}

It follows from \eqref{abs} and $\tau_t(\mathcal{F})=\tau_t(\mathcal{G})=t+1$ that all of $\mathcal{F}_T$, $\mathcal{G}_T$, $\mathcal{F}\setminus \mathcal{F}_T$ and $\mathcal{G}\setminus \mathcal{G}_T$ are nonempty.
For each $F \in \mathcal{F}\setminus \mathcal{F}_T$ and $A\in \mathcal{T}_f$, we have
$$t\geq |F\cap A|=|F\cap T|+|F\cap (A\setminus T)|\geq t,$$
implying that $|F\cap T|=t-1$ and $A\setminus T\subseteq F$. Therefore,
$$M_f=\cup_{A\in \mathcal{T}_f}A\subseteq F\cup T~\text{for~each}~F\in \mathcal{F}\setminus \mathcal{F}_T,$$
and so $|M_f|\leq k+1$. Similarly, for each $G\in \mathcal{G}\setminus \mathcal{G}_T$, we also have $|G\cap T|=t-1$ and $M_g\subseteq G\cup T$, and so $|M_g|\leq \ell+1$. By \eqref{TMTM}, (i) holds.

(ii) Observe that $(|\mathcal{T}_f|,|\mathcal{T}_g|)=(k-t+1,\ell-t+1)$, equivalent to $(|M_f|,|M_g|)= (k+1,\ell+1)$.
Then $M_f=F\cup T$ for each $F\in \mathcal{F}\setminus \mathcal{F}_T$, and $M_g=G\cup T$ for each $G\in \mathcal{G}\setminus \mathcal{G}_T$. Hence,
\begin{align}\label{F-TG-T}
\mathcal{F}\setminus \mathcal{F}_T\subseteq \big\{M_f\setminus \{x\} : x\in T\big\}
~\text{and}~
\mathcal{G}\setminus \mathcal{G}_T\subseteq \big\{ M_g\setminus \{y\} : y\in T\big\}.
\end{align}
Choose $M_f\setminus \{x\}\in \mathcal{F}\setminus \mathcal{F}_T$ and $M_g\setminus \{y\}\in \mathcal{G}\setminus \mathcal{G}_T$, where $x,y\in T$. Then
\begin{align}\label{MfcapMg=}
|M_f\cap M_g|=|(M_f\setminus \{x\})\cap (M_g\setminus \{y\})|+|\{x\}\cup \{y\}|\geq t+1.
\end{align}

Set
\begin{align*}
\mathcal{F}_0&=\left\{F\in {[n]\choose k} : T\subseteq F,~|F\cap (M_g\setminus T)|\geq 1\right\},\\
\mathcal{G}_0&=\left\{G\in {[n]\choose \ell} : T\subseteq G,~|G\cap (M_f\setminus T)|\geq 1\right\}.
\end{align*}
We can prove that
\begin{align}\label{FGF0G0+}
\mathcal{F}\subseteq \mathcal{F}_0\cup \big\{M_f\setminus \{x\} : x\in T\big\},~\mathcal{G}\subseteq \mathcal{G}_0\cup \big\{ M_g\setminus \{y\} : y\in T\big\}.
\end{align}
It is sufficient to prove that $\mathcal{F}_T\subseteq \mathcal{F}_0$ and $\mathcal{G}_T\subseteq \mathcal{G}_0$.
Choose $G\in \mathcal{G}\setminus \mathcal{G}_T$. By \eqref{F-TG-T}, there exists $y\in T$ such that $G=M_g\setminus \{y\}$. For each $F\in \mathcal{F}_T$, we have
$$|F\cap G|=|F\cap (M_g\setminus T)|+|F\cap (T\setminus \{y\})|\geq t,$$
implying that $|F\cap (M_g\setminus T)|\geq 1$.
Hence, $F\in \mathcal{F}_0$, and so $\mathcal{F}_T\subseteq\mathcal{F}_0$.
Similarly, one can prove  $\mathcal{G}_T\subseteq \mathcal{G}_0$.

(iia) Suppose $|M_f\cap M_g|=t+1$. By \eqref{FGF0G0+}, there exists an element $x\in T$ such that
$\mathcal{F}\setminus\mathcal{F}_T=\big\{M_f\setminus \{x\}\big\}~\text{and}~\mathcal{G}\setminus \mathcal{G}_T=\big\{M_g\setminus \{x\}\big\}$. Since $\mathcal{F}_0\cup \big\{M_f\setminus \{x\}\big\}$ and $\mathcal{G}_0\cup \big\{M_g\setminus \{x\}\big\}$ are cross $t$-intersecting, by the maximality of $\mathcal{F}$ and $\mathcal{G}$, we have
$$\mathcal{F}=\mathcal{F}_0\cup \big\{M_f\setminus \{x\}\big\}~\text{and}~\mathcal{G}=\mathcal{G}_0\cup \big\{M_g\setminus \{x\}\big\}.$$
Direct computation yields $|\mathcal{F}|= h(k,\ell,t)-t+1$ and $|\mathcal{G}|= h(\ell, k,t)-t+1$. In particular, when $t=1$, it follows directly that $|(M_f\setminus T)\cap (M_g\setminus T)|=1$ and
$$(\mathcal{F},\mathcal{G})= (\mathcal{H}(k,1;M_f\setminus T,M_g\setminus T), ~\mathcal{H}(\ell,1;M_g\setminus T,M_f\setminus T)).$$
Then (iia) holds.

(iib) Suppose $|M_f\cap M_g|\geq t+2$. It is routine to check that $\mathcal{F}_0\cup \big\{M_f\setminus \{x\} : x\in T\big\}$ and $\mathcal{G}_0\cup \big\{ M_g\setminus \{y\} : y\in T\big\}$ are cross $t$-intersecting. By the maximality of $\mathcal{F}$ and $\mathcal{G}$, we have $$\mathcal{F}=\mathcal{F}_0\cup \big\{M_f\setminus \{x\} : x\in T\big\}~\text{and}~\mathcal{G}=\mathcal{G}_0\cup \big\{ M_g\setminus \{y\} : y\in T\big\}.$$
It follows immediately that $|(M_f\setminus T)\cap (M_g\setminus T)|\geq 2$ and
$$(\mathcal{F},\mathcal{G})= (\mathcal{H}(k,t;M_f\setminus T,M_g\setminus T), ~\mathcal{H}(\ell,t;M_g\setminus T,M_f\setminus T)).$$
Then (iib) holds. $\qed$

Let $x$ be a positive integer and $M\subseteq [n]$. Denote
$$\mathcal{A}_x(M)= \left\{F\in {[n]\choose x} : |F\cap M|\geq t+1\right\}.$$

\begin{lem}\label{equivalent}
  Suppose that $\mathcal{T}_f\cup\mathcal{T}_g$ is a $t$-intersecting family with $\tau_t(\mathcal{T}_f\cup\mathcal{T}_g)= t+1$. Then $\max\big\{|\mathcal{T}_f|, |\mathcal{T}_g|\big\}\leq t+2$. Moreover, when $|\mathcal{T}_f|=|\mathcal{T}_g|=t+2$, we have $(\mathcal{F},\mathcal{G})\simeq(\mathcal{A}(k,t), \mathcal{A}(\ell,t))$.
\end{lem}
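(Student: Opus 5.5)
Since $\mathcal{T}_f\cup\mathcal{T}_g\subseteq{[n]\choose t+1}$ is $t$-intersecting with $\tau_t(\mathcal{T}_f\cup\mathcal{T}_g)=t+1$, we may extend it to a maximal $t$-intersecting subfamily of ${[n]\choose t+1}$. By Fact \ref{DRG}, this extension is either ${M\choose t+1}$ for some $M\in{[n]\choose t+2}$ or $\big[T\rangle_{t+1}$ for some $t$-set $T$. The second alternative would force $\tau_t(\mathcal{T}_f\cup\mathcal{T}_g)=t$, contradicting the hypothesis. Hence $\mathcal{T}_f\cup\mathcal{T}_g\subseteq{M\choose t+1}$, and in particular $\max\{|\mathcal{T}_f|,|\mathcal{T}_g|\}\leq {t+2\choose t+1}=t+2$. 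This settles the first assertion.

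Now assume $|\mathcal{T}_f|=|\mathcal{T}_g|=t+2$, so $\mathcal{T}_f=\mathcal{T}_g={M\choose t+1}$; after a permutation we take $M=[t+2]$. By \eqref{abs}, every $F\in{[n]\choose k}$ with $|F\cap M|\geq t+1$ lies in $\mathcal{F}$, so $\mathcal{A}_k(M)\subseteq\mathcal{F}$. Likewise $\mathcal{A}_\ell(M)\subseteq\mathcal{G}$. It remains to prove the reverse inclusions.

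Let $F\in\mathcal{F}$. Because each $(t+1)$-subset of $M$ is a $t$-cover of $\mathcal{F}$, we have $|F\cap A|\geq t$ for every $A\in{M\choose t+1}$. Suppose $|F\cap M|=s\leq t$. Choose $A\subseteq M$ of size $t+1$ that omits as many points of $F\cap M$ as possible. Since $|M\setminus F|=t+2-s\geq 2$, we can take $A\supseteq M\setminus F$ when $t+2-s\leq t+1$, which gives $|F\cap A|=(t+1)-(t+2-s)=s-1\leq t-1$, a contradiction. (If $t+2-s>t+1$, then $s=0$ and $|F\cap A|=0<t$ trivially.) Therefore $|F\cap M|\geq t+1$, i.e.\ $\mathcal{F}\subseteq\mathcal{A}_k(M)$. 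The symmetric argument gives $\mathcal{G}\subseteq\mathcal{A}_\ell(M)$.

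Consequently $\mathcal{F}=\mathcal{A}_k([t+2])=\mathcal{A}(k,t)$ and $\mathcal{G}=\mathcal{A}(\ell,t)$ up to isomorphism. The only step needing care is ruling out the star case in Fact \ref{DRG}, and the hypothesis $\tau_t=t+1$ does exactly that. Maximality of $(\mathcal{F},\mathcal{G})$ enters only through \eqref{abs}.
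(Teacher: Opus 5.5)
Your proof is correct and follows essentially the same route as the paper: Fact \ref{DRG} (with the star alternative excluded by $\tau_t(\mathcal{T}_f\cup\mathcal{T}_g)=t+1$) places $\mathcal{T}_f\cup\mathcal{T}_g$ inside ${M\choose t+1}$ with $|M|=t+2$, and $\mathcal{T}_f={M\choose t+1}$ then forces $\mathcal{F}\subseteq\mathcal{A}_k(M)$ (symmetrically for $\mathcal{G}$). The only cosmetic differences are that you obtain the reverse inclusion $\mathcal{A}_k(M)\subseteq\mathcal{F}$ from \eqref{abs} rather than by citing maximality directly, and you spell out the elementary check that $|F\cap M|\geq t+1$, which the paper states without detail.
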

\proof  Since $\mathcal{T}_f\cup\mathcal{T}_g$ is a $t$-intersecting family with $\tau_t(\mathcal{T}_f\cup\mathcal{T}_g)= t+1$, by Fact~\ref{DRG}, we can choose a $(t+2)$-subset $M$ such that $\mathcal{T}_f\cup\mathcal{T}_g\subseteq {M\choose t+1}$. Then $\max\{|\mathcal{T}_f|, |\mathcal{T}_g|\}\leq t+2$.

Suppose $|\mathcal{T}_f|=t+2$. From $|{M\choose t+1}|=t+2$, it follows that $\mathcal{T}_f={M\choose t+1}$. Then $|F\cap M|\geq t+1$ for every $F\in \mathcal{F}$, and so $\mathcal{F}\subseteq\mathcal{A}_k(M)$. Similarly, we have $\mathcal{G}\subseteq\mathcal{A}_\ell(M)$ when $|\mathcal{T}_g|=t+2$.
It follows from $|M|=t+2$ and the maximality of $\mathcal{F}$ and $\mathcal{G}$, we have
$$(\mathcal{F},\mathcal{G})\simeq(\mathcal{A}(k,t), \mathcal{A}(\ell,t)),$$
and the lemma follows.$\qed$

In the following lemmas, we consider the case when $\mathcal{T}_f\cup\mathcal{T}_g$ is not $t$-intersecting. Without loss of generality, we may assume that $\mathcal{T}_g$ is not a $t$-intersecting family. Then there exist $B_1,~B_2\in \mathcal{T}_g$ such that $|B_1\cap B_2|\leq t-1$. Let $A\in \mathcal{T}_f$. From Fact \ref{s-tt} and $|A|=t+1$, we have
\begin{align*}
t-1\geq |A\cap B_1\cap B_2|&=|A\cap B_1|+|A\cap B_2|-|(A\cap B_1)\cup(A\cap B_2)|\\
&\geq 2t-|A\cap(B_1\cup B_2)|\geq t-1.
\end{align*}
Therefore,
\begin{align}\label{bbabb}
|B_1\cap B_2|=t-1,~\text{and}~B_1\cap B_2\subseteq A\subseteq B_1\cup B_2~\text{for~every}~A\in \mathcal{T}_f.
\end{align}

\begin{lem}\label{T=1}
Suppose $\mathcal{T}_g$ is not $t$-intersecting, and $|\mathcal{T}_f|=1$. Then the following statements hold.
\begin{enumerate}[{\rm(i)}]
  \item If $\mathcal{T}_f\nsubseteq \mathcal{T}_g$, then $|\mathcal{T}_g|\leq 2(\ell-t+1)$.
  \item If $\mathcal{T}_f\subseteq \mathcal{T}_g$, then $|\mathcal{T}_g|\leq (t+1)(\ell-t)+1$. Moreover, when $t\geq 2$ and $|\mathcal{T}_g|=(t+1)(\ell-t)+1$, we have $(\mathcal{F},\mathcal{G})\simeq(\mathcal{C}_2(k,t;\ell), ~\mathcal{C}_1(\ell,t))$.
\end{enumerate}
\end{lem}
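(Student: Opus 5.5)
We work throughout with the configuration fixed just before the statement. Write $\mathcal{T}_f=\{A\}$. By Fact~\ref{s-tt}, every $B\in\mathcal{T}_g$ satisfies $|B\cap A|\geq t$. Hence every $B\in\mathcal{T}_g\setminus\{A\}$ has the form $B=(A\setminus\{a\})\cup\{y\}$ with $a\in A$ and $y\notin A$. The plan is to split $\mathcal{T}_g\setminus\{A\}$ into classes according to the removed element $a\in A$, and to bound each class using one suitably chosen member of $\mathcal{G}$.

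For (i), assume $A\notin\mathcal{T}_g$. Then some $G_0\in\mathcal{G}$ has $|G_0\cap A|\leq t-1$. Take any $B=(A\setminus\{a\})\cup\{y\}\in\mathcal{T}_g$. From $|G_0\cap B|\geq t$ we get $|G_0\cap(A\setminus\{a\})|\geq t-1$, so $|G_0\cap A|=t-1$, $a\notin G_0$ and $y\in G_0$. Thus $a$ is one of the two elements of $A\setminus G_0$, and $y\in G_0\setminus A$, which leaves at most $\ell-t+1$ choices for $y$. This gives $|\mathcal{T}_g|\leq 2(\ell-t+1)$.

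For (ii), assume $A\in\mathcal{T}_g$. Fix $a\in A$ and set $U=A\setminus\{a\}$. Since $|U|=t$ and $\tau_t(\mathcal{G})=t+1$, there is $G_a\in\mathcal{G}$ with $U\nsubseteq G_a$. Every $B=U\cup\{y\}\in\mathcal{T}_g$ forces $|G_a\cap U|=t-1$ and $y\in G_a$. Since $A$ is a $t$-cover of $\mathcal{G}$, we also get $a\in G_a$, so $y\in Y_a:=G_a\setminus A$ and $|Y_a|=\ell-t$. Summing over the $t+1$ choices of $a$ and adding $A$ itself gives $|\mathcal{T}_g|\leq (t+1)(\ell-t)+1$.

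In the equality case every class is full, that is, $\{y:(A\setminus\{a\})\cup\{y\}\in\mathcal{T}_g\}=Y_a$ for each $a$. Let $u_a$ denote the unique element of $U\setminus G_a$, so $G_a\cap A=A\setminus\{u_a\}$. For $a'\neq a$ with $a'\neq u_a$ and any $y'\in Y_{a'}$, the set $B'=(A\setminus\{a'\})\cup\{y'\}$ lies in $\mathcal{T}_g$. Since $|G_a\cap(A\setminus\{a'\})|=t-1$, the condition $|G_a\cap B'|\geq t$ forces $y'\in G_a$. Hence $Y_{a'}\subseteq Y_a$, and equal sizes give $Y_{a'}=Y_a$. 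Because $t\geq2$ gives $|A|\geq3$, these relations connect all elements of $A$, so all the $Y_a$ equal one set $Y$ with $|Y|=\ell-t$. Checking that the connecting graph on $A$ really is connected is the step I expect to be most delicate. It is exactly where $t\geq 2$ is needed, and I would handle it by a short case check on the edges $\{a,a'\}$ with $a'\notin\{a,u_a\}$.

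It follows that $\mathcal{T}_g$ consists of all $(t+1)$-subsets of $A\cup Y$ meeting $A$ in at least $t$ elements, where $|A\cup Y|=\ell+1$. Now let $G\in\mathcal{G}$ with $A\nsubseteq G$. Then $|G\cap A|=t$, and meeting every $(A\setminus\{a\})\cup\{y\}$ in at least $t$ elements forces $Y\subseteq G$, so $G=(A\cup Y)\setminus\{a\}$ with $a\in A$. Thus $\mathcal{G}\subseteq\mathcal{C}_1$ after relabelling $A=[t+1]$ and $Y=[t+2,\ell+1]$. Every $F\in\mathcal{F}$ satisfies $|F\cap A|\geq t$ because $A$ is a $t$-cover of $\mathcal{F}$. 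If $|F\cap A|=t$, then $F$ must meet $Y$, since otherwise $F$ contains no member of $\mathcal{T}_g$ and fails to $t$-intersect some $(A\cup Y)\setminus\{a\}$. Hence $\mathcal{F}\subseteq\mathcal{C}_2(k,t;\ell)$. Since $(\mathcal{C}_2(k,t;\ell),\mathcal{C}_1(\ell,t))$ is cross $t$-intersecting, maximality of $(\mathcal{F},\mathcal{G})$ gives equality, as claimed.
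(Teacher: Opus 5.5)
Your proof is correct and follows the paper's argument: it decomposes $\mathcal{T}_g\setminus\{M_f\}$ by the $t$-subset $A\setminus\{a\}$, bounds each class by $\ell-t$ using a member of $\mathcal{G}$ that omits an element of that $t$-subset, and finishes the equality case by showing $\mathcal{G}\subseteq\mathcal{C}_1(\ell,t)$ and $\mathcal{F}\subseteq\mathcal{C}_2(k,t;\ell)$ and then using maximality. The only difference is in the equality case. The paper avoids your connectivity check by taking just two members $G_1,G_2\in\mathcal{G}$ with different traces on $M_f$ and a $t$-subset $T_0$ distinct from both traces, which exists since $t+1\geq 3$. Your check also works: each $a$ has $t-1\geq 1$ neighbours outside $\{a,u_a\}$, so a component closed under these edges has at least $t$ vertices, and two disjoint such components cannot fit in $t+1$ vertices.
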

\proof (i) Suppose $\mathcal{T}_f=\{M_f\}\nsubseteq \mathcal{T}_g$. Then there exists $G'\in \mathcal{G}$ such that $|G'\cap M_f|\leq t-1$. Let $B\in \mathcal{T}_g$. Since $|G'\cap B|\geq t$ and $|B\cap M_f|\geq t$, we have
\begin{align*}
t-1\geq |G'\cap M_f|\geq |G'\cap M_f\cap B|=|G'\cap B|+|M_f\cap B|-|(G'\cup M_f)\cap B|\geq t-1.
\end{align*}
Hence,
$$|G'\cap M_f|=t-1,~G'\cap M_f\subseteq B~\text{and}~|G'\cap B|=|M_f\cap B|=t.$$
Then we may write $B=(G'\cap M_f)\cup \{x_B, y_B\}$, where $x_B\in M_f\setminus G'$ and $y_B\in G'\setminus M_f$. From $|M_f\setminus G'|=2$ and $|G'\setminus M_f|=\ell-t+1$, it follows that $|\mathcal{T}_g|\leq 2(\ell-t+1)$.

(ii) Since $\mathcal{T}_f=\{M_f\}\subseteq \mathcal{T}_g$, by Fact \ref{s-tt}, we have $|B\cap M_f|=t$ for each $B\in \mathcal{T}_g\setminus \{M_f\}$, and thus
\begin{align*}
\mathcal{T}_g\setminus \{M_f\}=\cup_{T\in {M_f\choose t}}\big((\mathcal{T}_g)_T\setminus \{M_f\}\big).
\end{align*}
Let $T\in  {M_f\choose t}$, and set
$$N(T)=\big\{x\in [n]\setminus M_f : T\cup \{x\}\in (\mathcal{T}_g)_T\setminus \{M_f\}\big\}.$$
Then
\begin{align}\label{TgNT}
\mathcal{T}_g=\cup_{T\in {M_f\choose t}}\big\{T\cup \{x\} : x\in N(T)\big\}\bigcup \big\{M_f\big\}.
\end{align}
Since $M_f\in \mathcal{T}_g$ and $\tau_t(\mathcal{G})=t+1$, there exist $G_1,G_2\in \mathcal{G}\setminus \mathcal{G}_{M_f}$ such that $G_1\cap M_f\neq G_2\cap M_f$.
Let $G_T\in \{G_1,G_2\}$ with $T\neq G_T\cap M_f$. Observe that $|G_T\cap M_f|=t$ and $|G_T\cap T|<t$.
For each $x\in N(T)$, since $|G_T\cap (T\cup \{x\})|\geq t$, we have $x\in G_T\setminus M_f$. It follows that $N(T)\subseteq G_T\setminus M_f$, and so $|N(T)|\leq \ell-t$. Therefore,
$$|\mathcal{T}_g|=\sum_{T\in {M_f\choose t}}|(\mathcal{T}_g)_T\setminus \{M_f\}|+1=\sum_{T\in {M_f\choose t}}|N(T)|+1\leq (t+1)(\ell-t)+1.$$

Suppose that $t\geq 2$ and $|\mathcal{T}_g|=(t+1)(\ell-t)+1$. Then for every $T\in {M_f\choose t}$, we have $|N(T)|=\ell-t$, and so $N(T)=G_T\setminus M_f$ for each $G_T\in \{G_1,G_2\}$ with $|G_T\cap T|<t$. By $t\geq 2$, it is clear that $|{M_f\choose t}|\geq 3$, and there exists $T_0\in {M_f\choose t}$ such that $|T_0\cap G_1|<t$ and $|T_0\cap G_2|<t$. Thus $N(T_0)=G_1\setminus M_f=G_2\setminus M_f$, implying that $N(T)=G_1\setminus M_f$ for all $T\in {M_f\choose t}$.
Observe that $N(T)=\cup_{T'\in {M_f\choose t}}N(T')=M_g\setminus M_f$. Hence, by \eqref{TgNT}, we have
\begin{align}\label{Tg=TCx}
\mathcal{T}_g=\left\{T\cup \{x\} : x\in M_g\setminus M_f,~ T\in {M_f\choose t}\right\}\bigcup \big\{M_f\big\}.
\end{align}

Set
\begin{align*}
\mathcal{F}'&=\left\{F\in {[n]\choose k} : |F\cap M_f|=t,~|F\cap (M_g\setminus M_f)|\geq 1\right\}\bigcup [M_f\rangle_k,\\
\mathcal{G}'&=\left\{T\cup (M_g\setminus M_f) : T\in {M_f\choose t}\right\}\bigcup \big[M_f\big\rangle_\ell.
\end{align*}
Note that $\mathcal{F}'$ and $\mathcal{G}'$ are cross $t$-intersecting.
By \eqref{abs}, we see that $[M_f\rangle_\ell\subseteq \mathcal{G}$. Let $G\in \mathcal{G}\setminus [M_f\rangle_\ell$, then there exists $T'\in {M_f\choose t}$ such that $G\cap M_f\neq T'$.
For each $x\in M_g\setminus M_f$, since $T'\cup \{x\}\in \mathcal{T}_g$, we have $$|G\cap (T'\cup \{x\})|=|G\cap T'|+|G\cap \{x\}|\geq t,$$
implying that $x\in G$. Hence, $G=(G\cap M_f)\cup(M_g\setminus M_f)\in \mathcal{G}'.$
Therefore,
$\mathcal{G}\subseteq \mathcal{G}'$.
Let $F\in \mathcal{F}$. If $M_f\subseteq F$, then $F\in [M_f\rangle_k\subseteq \mathcal{F}'$. Suppose $M_f\nsubseteq F$. Then $|F\cap M_f|=t$ from $\mathcal{T}_f=\{M_f\}$. By $\tau_t(\mathcal{G})=t+1$, there exists $G\in \mathcal{G}\setminus [M_f\rangle_\ell$ with $F\cap M_f\neq G\cap M_f$. Then $|F\cap (G\cap M_f)|=|(F\cap M_f)\cap (G\cap M_f)|\leq t-1$, and $G=(G\cap M_f) \cup (M_g\setminus M_f)$ from $\mathcal{G}\subseteq \mathcal{G}'$. Since
$$|F\cap G|=|F\cap (G\cap M_f)|+|F\cap (M_g\setminus M_f)|\geq t,$$
we obtain $|F\cap (M_g\setminus M_f)|\geq 1$, implying that $F\in \mathcal{F}'$. Hence, $\mathcal{F}\subseteq \mathcal{F}'$.
From the maximality of $\mathcal{F}$ and $\mathcal{G}$, it follows that $(\mathcal{F},\mathcal{G}) =(\mathcal{F}', \mathcal{G}')$.
Furthermore, it is routine to check that $(\mathcal{F},~\mathcal{G})\simeq (\mathcal{C}_2(k,t;\ell),~\mathcal{C}_1(\ell,t))$, and the latter part of (ii) holds.$\qed$

\begin{lem}\label{T=2-C}
 Suppose that $\mathcal{T}_g$ is not $t$-intersecting, and $\mathcal{T}_f$ is a $t$-intersecting family with $|\mathcal{T}_f|\geq 2$. Then the following statements hold.
\begin{enumerate}[{\rm(i)}]
  \item $|\mathcal{T}_f|=2$ and $|\mathcal{T}_g|\leq \ell+1$.
  \item If $|\mathcal{T}_g|=\ell+1$, then $|\mathcal{F}||\mathcal{G}|\leq(\ell+1) {n-t-1\choose k-t-1}h(\ell,t+1,t).$ Moreover, equality holds if and only if $k=t+1$ and there exist $X\in{[t+1,n]\choose 2}$ and $Y\in{[t+1,n]\choose \ell-t+1}$ with $|X\cap Y|\geq  2-\delta_{1,t}$ such that $(\mathcal{F},\mathcal{G})\simeq \left(\mathcal{H}(t+1,t;X,Y), ~\mathcal{H}(\ell,t;Y,X) \right)$.
\end{enumerate}
\end{lem}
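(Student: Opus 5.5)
The plan is to pin down the local structure forced by \eqref{bbabb} first, and only then count. Write $B_1=C\cup\{b_1,b_1'\}$ and $B_2=C\cup\{b_2,b_2'\}$, where $C=B_1\cap B_2$ has size $t-1$.

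By \eqref{bbabb}, every $A\in\mathcal{T}_f$ has the form $C\cup P$ with $P$ a $2$-subset of $\{b_1,b_1',b_2,b_2'\}$. By Fact \ref{s-tt}, $|A\cap B_1|,|A\cap B_2|\ge t$, so $P$ takes one element from each side; this leaves only four candidates. Two candidates with disjoint $P$ meet in just $C$, which has size $t-1$. Since $\mathcal{T}_f$ is $t$-intersecting, all its members therefore share a common element of $P$, so $|\mathcal{T}_f|\le 2$, and hence $|\mathcal{T}_f|=2$.

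Relabelling, $\mathcal{T}_f=\{T\cup\{y_1\},\,T\cup\{y_2\}\}$ with $T=C\cup\{x\}$, $x=b_1$ and $\{y_1,y_2\}=\{b_2,b_2'\}$. Thus $B_2=C\cup\{y_1,y_2\}$ and $M_f=T\cup\{y_1,y_2\}$ has size $t+2$.

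To bound $|\mathcal{T}_g|$, I will classify $B\in\mathcal{T}_g$ by $B\cap M_f$, using $|B\cap A|\ge t$ for both $A\in\mathcal{T}_f$.
\begin{itemize}
\item If $x\notin B$, then $B\supseteq C\cup\{y_1,y_2\}$, so $B=B_2$.
\item If $x\in B$ but some $c\in C$ is missing, then $B=M_f\setminus\{c\}$, which gives at most $t-1$ sets.
\item Otherwise $B=T\cup\{z\}$.
\end{itemize}
For the last type, since $\tau_t(\mathcal{G})=t+1$, there is $G_0\in\mathcal{G}$ with $T\nsubseteq G_0$. Because $B$ is a $t$-cover of $\mathcal{G}$, the condition $|G_0\cap(T\cup\{z\})|\ge t$ forces $|G_0\cap T|=t-1$ and $z\in G_0\setminus T$. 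This gives at most $\ell-t+1$ choices of $z$. In total $|\mathcal{T}_g|\le 1+(t-1)+(\ell-t+1)=\ell+1$, which proves (i).

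For (ii), equality forces each of the three types to be fully present:
\begin{itemize}
\item $B_2\in\mathcal{T}_g$;
\item $M_f\setminus\{c\}\in\mathcal{T}_g$ for every $c\in C$;
\item $T\cup\{z\}\in\mathcal{T}_g$ for every $z\in Y':=G_0\setminus T$.
\end{itemize}
In particular $G_0$ is determined up to the missing element of $T$, and by the same argument every $G\in\mathcal{G}$ with $T\nsubseteq G$ contains $Y'$.

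Next I will show that every $F\in\mathcal{F}$ contains some member of $\mathcal{T}_g$. The input is that $F$ must $t$-intersect every such $G$ and $t$-intersect $B_2$ and each $M_f\setminus\{c\}$. This gives $|\mathcal{F}|\le(\ell+1){n-t-1\choose k-t-1}$ by a union bound over $\mathcal{T}_g$.

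On the other side, every $G\in\mathcal{G}$ has $|G\cap A|\ge t$ for both members of $\mathcal{T}_f$. So either $T\subseteq G$ and $G$ meets $\{y_1,y_2\}$, or $G$ is one of at most $t$ sets of the form $(M_f\setminus\{u\})\cup\cdots$ with $u\in T$, using $M_f\subseteq G\cup T$ as in Lemma \ref{m}. Counting these gives $|\mathcal{G}|\le{n-t\choose \ell-t}-{n-t-2\choose \ell-t}+t=h(\ell,t+1,t)$. Multiplying the two bounds yields the stated product bound.

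The equality case is the main obstacle. The union bound on $|\mathcal{F}|$ is tight only if the families $[B\rangle_k$, $B\in\mathcal{T}_g$, are pairwise disjoint. For $k\ge t+2$, two members such as $T\cup\{z\}$ and $T\cup\{z'\}$ have a common $k$-superset, so this forces $k=t+1$, and then $\mathcal{F}=\mathcal{T}_g$. Having $|\mathcal{G}|=h(\ell,t+1,t)$ also requires all $t$ sets $M_f\cup Y'\setminus\{u\}$, $u\in T$, to lie in $\mathcal{G}$. By maximality, $(\mathcal{F},\mathcal{G})$ is then exactly $(\mathcal{H}(t+1,t;X,Y),\mathcal{H}(\ell,t;Y,X))$ with $X=\{y_1,y_2\}\cup\{x\}\setminus\{\text{the relabelled point}\}$ and $Y=Y'\cup\{\cdot\}$, after identifying $T$ with $[t]$.

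I expect the delicate part to be verifying the condition $|X\cap Y|\ge 2-\delta_{1,t}$. For this I would check that $y_1,y_2\in Y'$ when $t\ge2$; this uses $B_1$ and the sets $M_f\setminus\{c\}$ forcing $y_1,y_2\in G_0$. When $t=1$ the set $C$ is empty and only one of them is forced, which is where the $\delta_{1,t}$ comes from. Conversely, I would confirm directly that these pairs attain the bound.
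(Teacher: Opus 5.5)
Your part (i) is correct, and more direct than the paper's: the paper gets $|\mathcal{T}_f|=2$ from Fact~\ref{DRG} and bounds $|\mathcal{T}_g|$ through $|M_g|$. The gap is in (ii), where both bounding steps use the wrong incidences. Members of $\mathcal{T}_f$ are $t$-covers of $\mathcal{F}$, and members of $\mathcal{T}_g$ are $t$-covers of $\mathcal{G}$. You instead let $F\in\mathcal{F}$ $t$-intersect $B_2$ and $M_f\setminus\{c\}$, and let $G\in\mathcal{G}$ $t$-intersect $A_1=T\cup\{y_1\}$ and $A_2=T\cup\{y_2\}$. Both premises are false.

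For the first: $B_1\in\mathcal{T}_g$, so by \eqref{abs} every $k$-superset of $B_1$ avoiding $y_1,y_2$ lies in $\mathcal{F}$. Such a set meets $B_2$ only in $C$, which has $t-1$ elements.

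For the second, take $t=1$, $T=\{x\}$, $\ell\ge 3$, and an $\ell$-set $Y'$ with $y_1\in Y'$ and $x,y_2\notin Y'$. Let $\mathcal{F}=\{\{x,z\}: z\in Y'\}\cup\{\{y_1,y_2\}\}$ and $\mathcal{G}=[A_1\rangle_\ell\cup[A_2\rangle_\ell\cup\{Y'\}$. This pair satisfies all hypotheses with $|\mathcal{T}_g|=\ell+1$ and attains the bound; it is the $|X\cap Y|=1$ instance of $\mathcal{H}$. Yet $Y'\cap A_2=\emptyset$. Consequently $M_f\subseteq G\cup T$ fails for $G=Y'$, and your sets $M_f\cup Y'\setminus\{u\}$ have $\ell+1$ elements. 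Your claim would also force $\{y_1,y_2\}\subseteq Y'$, which excludes exactly the $t=1$ extremal pairs that your last paragraph correctly admits.

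The repair uses only what you have already proved.

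\emph{Bounding $|\mathcal{F}|$.} The correct input is $|F\cap A_1|,|F\cap A_2|\ge t$. Your trichotomy for members of $\mathcal{T}_g$ used nothing else, so with containment in place of equality it gives three cases: $F\supseteq B_2$, or $F\supseteq M_f\setminus\{c\}$ for some $c\in C$, or $T\subseteq F$. In the last case, $|F\cap G_0|\ge t$ gives $F\cap Y'\neq\emptyset$. Since $|\mathcal{T}_g|=\ell+1$, all these contained sets lie in $\mathcal{T}_g$, which justifies your union bound.

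\emph{Bounding $|\mathcal{G}|$.} For $G\in\mathcal{G}$ with $T\subseteq G$, use $|G\cap B_2|\ge t$, now legitimately since $B_2\in\mathcal{T}_g$; this gives $G\cap\{y_1,y_2\}\neq\emptyset$. For $T\nsubseteq G$ you already showed $G=(T\setminus\{u\})\cup Y'$. Together these give $|\mathcal{G}|\le h(\ell,t+1,t)$, and in the equality case $X=\{y_1,y_2\}$ and $Y=Y'$.

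\emph{The bound on $|X\cap Y|$.} For $|X\cap Y|\ge 2$ when $t\ge 2$, the sets that force $y_1,y_2\in G_0$ are $M_f\setminus\{u'\}$ with $u'\in T\setminus\{u\}$, that is, $B_2$ or some $M_f\setminus\{c\}$. It is not $B_1$.

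Once corrected, your route genuinely differs from the paper's. The paper fixes $A_1\in\mathcal{T}_g$ and splits into $|M_g|=\ell+1$ and $|M_g|=\ell+2$, the latter forcing $t=1$. In each case it determines $(\mathcal{F},\mathcal{G})$ completely by maximality before comparing sizes. You instead bound the two factors separately and identify the structure only at equality. The paper's second case then appears in your argument simply as $|Y'\cap\{y_1,y_2\}|=1$.
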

\proof (i)
Let $B_1$ and $B_2$ be distinct elements in $\mathcal{T}_g$ with $|B_1\cap B_2|\leq t-1$. It follows from Fact \ref{s-tt} that $B_1,B_2\notin \mathcal{T}_f$. From \eqref{bbabb}, we know that $|B_1\cup B_2|=|B_1|+|B_2|-|B_1\cap B_2|=t+3$ and $M_f=\cup_{A\in \mathcal{T}_f}A\subseteq B_1\cup B_2\subseteq M_g$.

Assume that $|\mathcal{T}_f|\geq 3$. Let $A_1$, $A_2$ and $A_3$ be distinct elements in $\mathcal{T}_f$. For each $i\in [2]$, we have $\mathcal{E}_i=\{A_1,A_2,A_3,B_i\}\subseteq {B_1\cup B_2\choose t+1}$ is a $t$-intersecting family from Fact \ref{s-tt}. If $\tau_t(\mathcal{E}_i)=t+1$ for each $i\in [2]$, from Fact \ref{DRG} and $|A_1\cup A_2|=t+2$, then $B_i\in\mathcal{E}_i\subseteq {A_1\cup A_2\choose t+1}$, contradicting $|B_1\cap B_2|\leq t-1$. If there exists $j\in [2]$ such that $\tau_t(\mathcal{E}_j)=t$, then $\mathcal{E}_j\subseteq [A_1\cap A_2,B_1\cup B_2]_{t+1}$, and so  $4=|\mathcal{E}_j|\leq \big|[A_1\cap A_2,B_1\cup B_2]_{t+1}\big|=3$, a contradiction. Hence $|\mathcal{T}_f|=2$.

Let $\mathcal{T}_f=\{A_1,A_2\}$ and $T=A_1\cap A_2$. For each $B\in \mathcal{T}_g$, since $|B\cap A_1|\geq t$ and $|B\cap A_2|\geq t$, it follows from Fact \ref{DRG} that either $T\subseteq B$ or $B\subseteq M_f$. Then
\begin{align}\label{Tsub}
\mathcal{T}_g\subseteq [T,M_g]_{t+1}\cup {M_f\choose t+1},
\end{align}
and so
\begin{align}\label{Tsub-1}
|\mathcal{T}_g|\leq \biggl|[T,M_g]_{t+1}\cup {M_f\choose t+1}\biggl|=|M_g|,
\end{align}
from $[T,M_g]_{t+1}\cap {M_f\choose t+1}=\mathcal{T}_f$.

Let $y\in M_g\setminus M_f$. There exists $B_y\in \mathcal{T}_g$ such that $y\in B_y$. From $B_y\nsubseteq M_f$, we have $B_y=T\cup \{y\}$, and so $|G\cap (T\cup\{y\})|\geq t$ for all $G\in \mathcal{G}$. Hence,
\begin{align}\label{Mg-Mf<G}
\text{for~each}~G\in\mathcal{G}~\text{with}~|G\cap T|\leq t-1,~\text{we~have}~y\in G,~\text{and~so}~M_g\setminus M_f\subseteq G.
\end{align}
From $\tau_t(\mathcal{G})=t+1$, there exists $G'\in \mathcal{G}$ such that $|G'\cap T|\leq t-1$.
Then by \eqref{Mg-Mf<G}, we know
\begin{align}\label{Mg<l+1}
|M_g|\leq |G'\cup M_f| = \ell+t+2-|G'\cap M_f|.
\end{align}
Since $\mathcal{T}_g$ is not $t$-intersecting, by \eqref{Tsub}, we have $\mathcal{T}_g\cap {M_f\choose t+1}\neq \emptyset$, and so $|G'\cap M_f|\geq t$. If $|G'\cap M_f|\geq t+1$, then $|\mathcal{T}_g|\leq |M_g|\leq \ell+1$. Now suppose $|G'\cap M_f|=t$. Since $|{M_f\choose t+1}|\geq 3$, there exists $B\in {M_f\choose t+1}$ such that $|G'\cap B|<t$. Hence, $B\notin \mathcal{T}_g$, implying that $|\mathcal{T}_g|<|M_g|\leq \ell+2$ due to \eqref{Tsub-1} and \eqref{Mg<l+1}. Therefore, $|\mathcal{T}_g|\leq \ell+1$ holds.

(ii) Suppose $|\mathcal{T}_g|=\ell+1$.  By \eqref{Tsub-1}, we have $|M_g|\in \{\ell+1,\ell+2\}$ and $|\{A_1,A_2\}\cap \mathcal{T}_g|\geq 1$.
Without loss of generality, assume that $A_1\in \mathcal{T}_g$.
Since $\mathcal{T}_g$ is not $t$-intersecting, there exists $B\in \mathcal{T}_g$ such that  $B\in {M_f\choose t+1}$ and $T\nsubseteq B$.
Let $G\in \mathcal{G}\setminus ([A_1\rangle_\ell\cup[A_2\rangle_\ell)$. Thus,
$|G\cap T|\geq |G\cap A_1|-1\geq t-1.$
If $T\subseteq G$, then
$$|G\cap M_f|=|G\cap (A_1\cup B)|=|G\cap A_1|+|G\cap B|-|G\cap A_1\cap B|\geq 2t-|T\cap B|\geq t+1,$$
implying that $A_1\subseteq G$ or $A_2\subseteq G$, a contradiction. Hence,
\begin{align}\label{GCT=t-1}
|G\cap T|=t-1~\text{for~all}~G\in \mathcal{G}\setminus ([A_1\rangle_\ell\cup[A_2\rangle_\ell).
\end{align}

Next, we will discuss the two cases $|M_g|=\ell+1$ and $|M_g|=\ell+2$ separately.

\textbf{\textrm{Case~1.}} $|M_g|=\ell+1$.

Then $|\mathcal{T}_g|= |M_g|$, and from \eqref{Tsub}, we have $\mathcal{T}_g=[T,M_g]_{t+1}\cup {M_f\choose t+1}.$

Set
\begin{align*}
\mathcal{F}'&=\left\{F\in {[n]\choose k} : T\subseteq F,~|F\cap (M_g\setminus T)|\geq 1\right\}\bigcup \mathcal{A}_k(M_f),\\
\mathcal{G}'&= [A_1\rangle_\ell\bigcup[A_2\rangle_\ell \bigcup \big\{M_g\setminus\{x\} : x\in T\big\} .
\end{align*}
Note that $\mathcal{F}'$ and $\mathcal{G}'$ are cross $t$-intersecting.
Let $G\in \mathcal{G}\setminus ([A_1\rangle_\ell\cup[A_2\rangle_\ell)$.
Since ${M_f\choose  t+1}\subseteq \mathcal{T}_g$, we have $|G\cap M_f|\geq t+1$.
By \eqref{GCT=t-1}, we know $|G\cap T|=t-1$, and so $M_g\setminus M_f\subseteq G$ from \eqref{Mg-Mf<G}. Thus, $$\ell\geq|G\cap M_g|=|G\cap M_f|+|G\cap (M_g\setminus M_f)|\geq \ell,$$
implying that $G\subseteq M_g$, and so $G=M_g\setminus \{x\}$ for some $x\in T$. Hence, $\mathcal{G}\subseteq \mathcal{G}'$.
Let $F\in \mathcal{F}\setminus \mathcal{A}_k(M_f)$. Then $|F\cap M_f|=t$. It follows from $|F\cap A_1|\geq t$ and $|F\cap A_2|\geq t$ that $F\cap M_f=T$. Choose $G=M_g\setminus \{x\}$ for some $x\in T$. Since
$$|F\cap G|=|F\cap(M_g\setminus \{x\})|=|F\cap (M_g\setminus T)|+|F\cap (T\setminus \{x\})|\geq t,$$
and $|F\cap (T\setminus \{x\})|=|T\setminus \{x\}|=t-1$, we have $|F\cap (M_g\setminus T)|\geq 1$, implying that $F\in \mathcal{F}'$. Hence, $\mathcal{F}\subseteq \mathcal{F}'$.

Therefore, by the maximality of $\mathcal{F}$ and $\mathcal{G}$, we have $\mathcal{F}=\mathcal{F}'$ and $\mathcal{G}=\mathcal{G}'$.
When $k\geq t+2$, it is routine to check that $\mathcal{F}\subseteq \cup_{B\in \mathcal{T}_g}[B\rangle_k$, and so $|\mathcal{F}||\mathcal{G}|< (\ell+1) {n-t-1\choose k-t-1}h(\ell,t+1,t)$.
When $k=t+1$, it is straightforward to verify that $M_f\setminus T\subseteq M_g\setminus T$ and
 $$(\mathcal{F},~\mathcal{G})= \left(\mathcal{H}(t+1,t;M_f\setminus T,M_g\setminus T), ~\mathcal{H}(\ell,t;M_g\setminus T,M_f\setminus T) \right).$$
Therefore, (ii) holds when $|M_g|=\ell+1$.

\textbf{\textrm{Case~2.}} $|M_g|=\ell+2$.

Since $|[T,M_g]_{t+1}\setminus \mathcal{T}_f|=\ell-t$, it follows from \eqref{Tsub} that
$$\left|\mathcal{T}_g\cap {M_f\choose t+1}\right|\geq |\mathcal{T}_g|-\left|[T,M_g]_{t+1}\setminus \mathcal{T}_f\right|=t+1\geq 2.$$
If $|\mathcal{T}_g\cap {M_f\choose t+1}|\geq 3$, then by Fact \ref{DRG}, we have $|G\cap M_f|\geq t+1$ for all $G\in \mathcal{G}$, and so $|M_g|\leq \ell+1$ from \eqref{Mg<l+1}, a contradiction.
Hence $\left|\mathcal{T}_g\cap {M_f\choose t+1}\right|=2$, and so $t=1$.
Let ${M_f\choose 2}=\{A_1,A_2,B_0\}$. Recall our assumption that $A_1\in \mathcal{T}_g$. Since $\mathcal{T}_g$ is not $t$-intersecting, then $B_0\in \mathcal{T}_g$, and so $\mathcal{T}_g\cap {M_f\choose t+1}=\{A_1,B_0\}$.

Set
\begin{align*}
\mathcal{F}''&=\left\{F\in {[n]\choose k} : T\subseteq F,~|F\cap (M_g\setminus A_2)|\geq 1\right\}\bigcup [B_0\rangle_k,\\
\mathcal{G}''&= [A_1\rangle_\ell\bigcup[A_2\rangle_\ell \bigcup \big\{M_g\setminus A_2\big\}.
\end{align*}
Note that $\mathcal{F}''$ and $\mathcal{G}''$ are cross $t$-intersecting. Recall that $M_f\subseteq M_g$.
Let $G\in \mathcal{G}\setminus ([A_1\rangle_\ell\cup[A_2\rangle_\ell)$.
Then $M_g\setminus M_f\subseteq G$ by \eqref{Mg-Mf<G}, and $G\cap T=\emptyset$ by \eqref{GCT=t-1}. Since $|M_g\setminus M_f|=\ell-1$, we have $|G\cap M_f|=|G\cap A_1|=|G\cap B_0|=1$, and so $G\cap M_f=A_1\cap B_0=M_f\setminus A_2$ by $M_f=(A_1\cap B_0)\cup A_2$.
Thus, $(M_g\setminus M_f)\cup (M_f\setminus A_2)=M_g\setminus A_2\subseteq G$, and so $G=M_g\setminus A_2$. Hence, $\mathcal{G}\subseteq \mathcal{G}''$.
Let $F\in \mathcal{F}$. It is clear that $|F\cap G|=|F\cap (M_g\setminus A_2)|\geq 1$.
If $T\subseteq F$, then $F\in \mathcal{F}''$.
If $T\nsubseteq F$, then from $|F\cap A_1|\geq 1$, $|F\cap A_2|\geq 1$ and ${M_f\choose 2}=\{A_1,A_2,B_0\}$, we have $B_0\subseteq F$, implying $F\in [B_0\rangle_k$. Hence, $\mathcal{F}\subseteq \mathcal{F}''$.

Therefore, by the maximality of $\mathcal{F}$ and $\mathcal{G}$, we obtain $\mathcal{F}=\mathcal{F}''$ and $\mathcal{G}=\mathcal{G}''$.
When $k\geq 3$, a direct computation yields $|\mathcal{F}||\mathcal{G}|<(\ell+1) {n-2\choose k-2}h(\ell,2,1).$
When $k=2$, it is straightforward to verify that $|B_0\cap (M_g\setminus A_2)|=1$ and $$(\mathcal{F},~\mathcal{G})=\left(\mathcal{H}(2,1; B_0,M_g\setminus A_2), ~\mathcal{H}(\ell,1;M_g\setminus A_2,B_0)\right).$$
Then (ii) holds when $|M_g|=\ell+2$. This completes the proof.$\qed$

\begin{lem}\label{both-non-inter}
Suppose neither $\mathcal{T}_f$ nor $\mathcal{T}_g$ is $t$-intersecting. The following statements hold.
\begin{enumerate}[{\rm(i)}]
  \item $|\mathcal{T}_f|, |\mathcal{T}_g|\in \{2,3\}$, or $(|\mathcal{T}_f|, |\mathcal{T}_g|)\in \big\{(2,4),~(4,2)\big\}$.
  \item If $|\mathcal{T}_f|=|\mathcal{T}_g|=3$, then $|\mathcal{F}||\mathcal{G}|= \left(a(k,t)-(t-1){n-t-3\choose k-t-1}\right)\left(a(\ell,t)-(t-1){n-t-3\choose \ell-t-1}\right).$ Moreover, when $t=1$, we have
  $(\mathcal{F},~\mathcal{G})\simeq (\mathcal{B}(k;(1,3,2,4)), ~\mathcal{B}(\ell;(1,2,3,4)))$.
\end{enumerate}
\end{lem}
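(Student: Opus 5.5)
The plan is to reduce everything to a configuration of $2$-subsets of a $4$-set, classify the possible pairs $(\mathcal{T}_f,\mathcal{T}_g)$ there, and then use maximality to pin down $\mathcal{F}$ and $\mathcal{G}$ in case (ii).

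\textbf{Step 1: reduction to a $4$-set.} Since $\mathcal{T}_g$ is not $t$-intersecting, pick $B_1,B_2\in\mathcal{T}_g$ with $|B_1\cap B_2|\leq t-1$. By \eqref{bbabb}, $S:=B_1\cap B_2$ has size $t-1$, and $S\subseteq A\subseteq W:=B_1\cup B_2$ for every $A\in\mathcal{T}_f$; here $|W|=t+3$.
Since $\mathcal{T}_f$ is not $t$-intersecting either, the symmetric form of \eqref{bbabb} applies. It gives $A_1,A_2\in\mathcal{T}_f$ with $R:=A_1\cap A_2$ of size $t-1$ and $R\subseteq B\subseteq A_1\cup A_2$ for every $B\in\mathcal{T}_g$.
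Both $A_1$ and $A_2$ contain $S$, so $S\subseteq R$, and equal sizes force $S=R$. Also $A_1\cup A_2\subseteq W$ has size $t+3$, so $A_1\cup A_2=W$.
Hence every member of $\mathcal{T}_f\cup\mathcal{T}_g$ has the form $S\cup e$ with $e$ a $2$-subset of the $4$-set $W\setminus S$, and $|(S\cup e)\cap(S\cup e')|\geq t$ iff $e\cap e'\neq\emptyset$. The problem becomes: $E_f,E_g\subseteq\binom{W\setminus S}{2}$ are cross intersecting, and each contains a disjoint pair.

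\textbf{Step 2: classification, giving (i).} Let $M_1,M_2,M_3$ be the three perfect matchings of $K_4$ on $W\setminus S$.
Say $M_1\subseteq E_g$ and $M_2\subseteq E_f$; these are distinct, since two equal matchings would contain disjoint edges, one in each family. Cross intersection forces $E_f\cap M_1=\emptyset$ and $E_g\cap M_2=\emptyset$.
So $E_f=M_2\cup X$ and $E_g=M_1\cup Y$ with $X,Y\subseteq M_3=\{e,e'\}$, and no $x\in X$ may be disjoint from some $y\in Y$, that is, $X$ and $Y$ do not contain complementary edges.
The possibilities are therefore
\begin{itemize}
\item $X=Y=\emptyset$, giving sizes $(2,2)$;
\item $X=Y=\{e\}$, giving $(3,3)$;
\item one of $X,Y$ empty and the other of size $1$ or $2$, giving $(2,3),(3,2),(2,4),(4,2)$.
\end{itemize}
This is exactly (i).

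\textbf{Step 3: case (ii), $X=Y=\{e\}$.} Label $W\setminus S=\{1,2,3,4\}$ so that $E_g=\{12,23,34\}$ and $E_f=\{13,32,24\}$: each is a path of length $3$ and they share the middle edge.

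I claim
\begin{equation*}
\mathcal{F}=\bigcup_{B\in\mathcal{T}_g}[B\rangle_k\quad\text{and}\quad\mathcal{G}=\bigcup_{A\in\mathcal{T}_f}[A\rangle_\ell .
\end{equation*}
The inclusions "$\supseteq$" are \eqref{abs}, and these two families are cross $t$-intersecting because $\mathcal{T}_f,\mathcal{T}_g$ are (Fact~\ref{s-tt}). So the work is the reverse inclusions.

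Take $F\in\mathcal{F}$. Every $B\in\mathcal{T}_g$ is a $t$-cover of $\mathcal{F}$, so $|F\cap(S\cup e)|\geq t$ for $e\in\{12,23,34\}$.
If $S\subseteq F$, then $F$ meets each of $12,23,34$. A set meeting $12$ and $34$ either contains $2$ or $3$ together with an edge of the path, or contains $\{1,4\}$. In the first case $F$ contains some member of $\mathcal{T}_g$. In the second case $F\supseteq S\cup\{1,4\}$ and must also meet $23$, so again $F$ contains a member of $\mathcal{T}_g$.

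\textbf{Main obstacle.} The key step is excluding $S\not\subseteq F$. In that case $|F\cap S|=t-1$ and $F\supseteq\{1,2,3,4\}$.
To rule this out, I would first derive the analogous constraint for $\mathcal{G}$: every $G$ either contains $S$ and a member of $E_f$, or misses exactly one element of $S$ and contains $\{1,2,3,4\}$.
Next, since $\tau_t(\mathcal{F})=t+1$, the set $S\cup\{1,2\}$ (for instance) is not a $t$-cover of $\mathcal{F}$; likewise each $(t+1)$-set outside $\mathcal{T}_f$, respectively $\mathcal{T}_g$, fails to be a $t$-cover of $\mathcal{F}$, respectively $\mathcal{G}$.
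These failures produce members of $\mathcal{G}$ and $\mathcal{F}$ whose traces on $W$ are restricted. Intersecting such a member with the hypothetical $F$ (or with its $\mathcal{G}$-counterpart) should give $|F\cap G|\leq t-1$, a contradiction. Alternatively, one can show that adding or removing such exceptional sets contradicts maximality.
I expect this bookkeeping on $S$ and the $4$-set to be the delicate part, especially when $t=1$ and $S=\emptyset$, where "missing one element of $S$" cannot occur at all. That makes the $t=1$ case immediate, and it also yields the identification with $(\mathcal{B}(k;(1,3,2,4)),\mathcal{B}(\ell;(1,2,3,4)))$.

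\textbf{Step 4: counting.} It remains to evaluate $|\bigcup_{e\in\{12,23,34\}}[S\cup e\rangle_k|$ by inclusion–exclusion over the three stars and compare with $a(k,t)-(t-1)\binom{n-t-3}{k-t-1}$. This is routine; the same holds for $\mathcal{G}$ with $\ell$ in place of $k$.
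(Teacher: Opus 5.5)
Your Steps 1--2 are fine and amount to the paper's proof of (i): the paper writes $\{A_1,A_2\}\subseteq\mathcal{T}_f\subseteq\{A_1,A_2,C_1,C_2\}$ directly, and your perfect-matching phrasing is equivalent.

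The gap is in Step 3. For $t\geq 2$ and $k\geq t+2$, the claimed identity $\mathcal{F}=\bigcup_{B\in\mathcal{T}_g}[B\rangle_k$ is false, so the ``main obstacle'' of excluding $F$ with $S\not\subseteq F$ cannot be overcome. Let $y\in S$, and let $F$ be any $k$-set with $F\cap W=W\setminus\{y\}$. Here $|F\cap S|=t-2$; your ``$t-1$'' is a slip, since $|S|=t-1$. Every $G\in\mathcal{G}$ falls into one of two cases:
\begin{itemize}
\item $G$ contains $S\cup e$ for some $e\in E_f$, and then $|F\cap G|\geq (t-2)+2=t$;
\item $|G\cap W|\geq t+2$, and then $|F\cap G|\geq 2(t+2)-(t+3)=t+1$.
\end{itemize}
So $F$ is cross $t$-intersecting with all of $\mathcal{G}$, and maximality forces $F\in\mathcal{F}$, even though $F$ contains no member of $\mathcal{T}_g$. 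Accordingly, your Step 4 would give $3{n-t-1\choose k-t-1}-2{n-t-2\choose k-t-2}$. This falls short of the stated $a(k,t)-(t-1){n-t-3\choose k-t-1}$ by exactly $(t-1){n-t-3\choose k-t-2}$, which is the number of such sets.

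The repair, which is what the paper does, is to aim for
$\mathcal{F}_0=\bigcup_{B\in\mathcal{T}_g}[B\rangle_k\cup\{F\in{[n]\choose k} : |F\cap W|\geq t+2\}$
and the analogous $\mathcal{G}_0$, instead of excluding the extra sets.
\begin{itemize}
\item Your case analysis, once corrected, shows $\mathcal{F}\subseteq\mathcal{F}_0$ and $\mathcal{G}\subseteq\mathcal{G}_0$.
\item One checks directly that $\mathcal{F}_0$ and $\mathcal{G}_0$ are cross $t$-intersecting, so maximality gives equality.
\item The count is then $3{n-t-3\choose k-t-1}+(t+3){n-t-3\choose k-t-2}+{n-t-3\choose k-t-3}$, which equals the stated value.
\end{itemize}

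There is also a role slip in that case analysis. The constraints on $F\in\mathcal{F}$ come from $\mathcal{T}_f$, the $t$-covers of $\mathcal{F}$, not from $\mathcal{T}_g$. So an $F\supseteq S$ must meet $13$, $32$ and $24$, not $12$, $23$ and $34$. With your constraints the conclusion fails: $F\cap W=S\cup\{1,3\}$ meets $12,23,34$ but contains no member of $\mathcal{T}_g$. The corrected argument goes through because the $2$-element vertex covers of each path are exactly the edges of the other path.

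For $t=1$, every set with $|F\cap W|\geq 3$ already lies in the union of the three stars. So your identification with $(\mathcal{B}(k;(1,3,2,4)),\mathcal{B}(\ell;(1,2,3,4)))$ is correct in that case.
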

\proof
(i) Since neither $\mathcal{T}_f$ nor $\mathcal{T}_g$ is $t$-intersecting, we have $|\mathcal{T}_f|\geq 2$ and $|\mathcal{T}_g|\geq 2$.
Let $A_1$ and $A_2$ be distinct elements in $\mathcal{T}_f$ with $|A_1\cap A_2|\leq t-1$, and $B_1$ and $B_2$ be distinct elements in $\mathcal{T}_g$ with $|B_1\cap B_2|\leq t-1$.
By \eqref{bbabb}, we have $|A_1\cap A_2|=|B_1\cap B_2|=t-1$, and $A_1\cap A_2\subseteq B\subseteq A_1\cup A_2$ and $B_1\cap B_2\subseteq A \subseteq B_1\cup B_2$ for each $A\in \mathcal{T}_f$ and $B\in \mathcal{T}_g$, implying that $A_1\cap A_2=B_1\cap B_2$ and $M_g=A_1\cup A_2=B_1\cup B_2=M_f\in {[n]\choose t+3}$.

Let $A_1\cap A_2=T$. Suppose $A_1=T\cup \{x_1,x_2\}$, $A_2=T\cup \{x_3,x_4\}$, where $x_1$, $x_2$, $x_3$ and $x_4$ are the four elements of $M_f\setminus T$. It follows from Fact~\ref{s-tt} that $B_1,B_2\notin \mathcal{T}_f$ and $A_1,A_2\notin \mathcal{T}_g$. So we can let $B_1=T\cup \{x_1,x_3\}$ and $B_2=T\cup \{x_2,x_4\}$, and set $C_1=T\cup\{x_2,x_3\}$ and $C_2=T\cup \{x_1,x_4\}$.
Hence,
$$\{A_1,A_2\}\subseteq \mathcal{T}_f\subseteq \{A_1,A_2,C_1,C_2\}~\text{and}~\{B_1,B_2\}\subseteq \mathcal{T}_g\subseteq \{B_1,B_2,C_1,C_2\}.$$
If $|\mathcal{T}_f|=4$, then $\mathcal{T}_f= \{A_1,A_2,C_1,C_2\}$, and so $\mathcal{T}_g=\{B_1,B_2\}$. Therefore, by symmetry, we can obtain that
$|\mathcal{T}_f|, |\mathcal{T}_g|\in\{2,3\}~\text{or}~(|\mathcal{T}_f|, |\mathcal{T}_g|)\in \big\{(2,4),~(4,2)\big\},$
and (i) holds.

(ii) Suppose $|\mathcal{T}_f| = |\mathcal{T}_g| =3$. Without loss of generality, we assume $\mathcal{T}_f=\{A_1,A_2,C_1\}$. Then from Fact \ref{s-tt}, we have $\mathcal{T}_g = \{B_1,B_2,C_1\}$. Observe that $M_f=M_g=T\cup \{x_1,x_2,x_3,x_4\}$. Denote
\begin{align*}
\mathcal{F}_0&=[B_1\rangle_k\cup [B_2\rangle_k \cup [C_1\rangle_k\cup\left\{F\in {[n]\choose k} : |F\cap M_f|\geq t+2 \right\},\\
\mathcal{G}_0&=[A_1\rangle_\ell \cup [A_2\rangle_\ell \cup [C_1\rangle_\ell\cup\left\{G\in {[n]\choose \ell} : |G\cap M_g|\geq t+2 \right\}.
\end{align*}

Let $F\in \mathcal{F}$. If $|F\cap M_f|\geq t+2$ or $C_1\subseteq F$, then $F\in \mathcal{F}_0$. Now suppose $|F\cap M_f|\leq t+1$ and $C_1\nsubseteq F$. If $T\nsubseteq F$, then $t\geq 2$ and it follows from $A_1,A_2\in \mathcal{T}_f$ that $|F\cap T|=t-2$, $\{x_1,x_2\}\subseteq F$ and $\{x_3,x_4\}\subseteq F$, which contradicts $|F\cap M_f|\leq t+1$. Hence $T\subseteq F$ and $|F\cap \{x_2,x_3\}|=1$. From $A_1,A_2\in \mathcal{T}_f$ again, we have $|F\cap \{x_1,x_2\}|\geq 1$ and $|F\cap \{x_3,x_4\}|\geq 1$, implying that $\{x_1,x_3\}\subseteq F$ or $\{x_2,x_4\}\subseteq F$. Thus $B_1\subseteq F$ or $B_2\subseteq F$, and so $F\in \mathcal{F}_0$. Therefore, $\mathcal{F}\subseteq \mathcal{F}_0$.
Similarly, one can prove $\mathcal{G}\subseteq \mathcal{G}_0$. It is clear that $\mathcal{F}_0$ and $\mathcal{G}_0$ are cross $t$-intersecting. By the maximality of $\mathcal{F}$ and $\mathcal{G}$, we have $\mathcal{F}=\mathcal{F}_0$ and $\mathcal{G}=\mathcal{G}_0$.
A direct computation yields
\begin{align*}
|\mathcal{F}|&=3{n-t-3\choose k-t-1}+(t+3){n-t-3\choose k-t-2}+{n-t-3\choose k-t-3}=a(k,t)-(t-1){n-t-3\choose k-t-1},\\
|\mathcal{G}|&=3{n-t-3\choose \ell-t-1}+(t+3){n-t-3\choose \ell-t-2}+{n-t-3\choose \ell-t-3}=a(\ell,t)-(t-1){n-t-3\choose \ell-t-1}.
\end{align*}
Thus the former part of (ii) holds.

Let $t=1$. Then $\mathcal{F}=\big[\{x_1,x_3\}\big\rangle_k \cup\big[\{x_2,x_4\}\big\rangle_k \cup\big[\{x_2,x_3\}\big\rangle_k$, implying that $\mathcal{F}\simeq \mathcal{B}(k;(1,3,2,4))$. By symmetry, $\mathcal{G}\simeq \mathcal{B}(\ell;(1,2,3,4))$ is obvious. Hence, the latter part of (ii) holds, and the lemma follows.$\qed$

\section{The proofs of the main theorems}\label{2.2}

Before proving Theorem \ref{main-1}, we will give an upper bound of the product $|\mathcal{F}||\mathcal{G}|$ with
$(\tau_t(\mathcal{F}),\tau_t(\mathcal{G}))\neq (t+1,t+1)$.

\begin{lem}\label{cfcg-t+2}
Let $n$, $k$, $\ell$ and $t$ be positive integers satisfying $\min\{k, \ell\}\geq t+1$ and $n\geq (t+1)^2(k+\ell)^2(k-t+1)(\ell-t+1)+k+\ell-t$.
Suppose $\mathcal{F} \subseteq \binom{[n]}{k}$ and $\mathcal{G} \subseteq \binom{[n]}{\ell}$ are maximal cross $t$-intersecting families with
$(\tau_t(\mathcal{F}),\tau_t(\mathcal{G}))\neq (t+1,t+1)$. Then
$|\mathcal{F}||\mathcal{G}|<a(k,t)a(\ell,t).$
\end{lem}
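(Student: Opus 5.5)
By symmetry we may assume $\tau_t(\mathcal{F})=t$. Our plan is to compare the product with $a(k,t)a(\ell,t)$, whose leading term is $(t+2)^2\binom{n}{k-t-1}\binom{n}{\ell-t-1}$ up to factorials; that is, it has total degree $k+\ell-2t-2$ in $n$. There are two subcases.

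\textbf{Case 1: $\tau_t(\mathcal{G})=t$ as well.} Then all members of $\mathcal{F}$ contain a $t$-set $T_1$, and all members of $\mathcal{G}$ contain a $t$-set $T_2$. If $T_1=T_2$, maximality gives $\mathcal{F}=[T\rangle_k$ and $\mathcal{G}=[T\rangle_\ell$, which is excluded by nothing in the hypothesis. But then the product is $\binom{n-t}{k-t}\binom{n-t}{\ell-t}$, which exceeds $a(k,t)a(\ell,t)$.

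This makes me read the lemma as implicitly assuming $\tau_t(\mathcal{F}),\tau_t(\mathcal{G})\ge t+1$ with at least one of them at least $t+2$. That is the situation it is used for in proving Theorem~\ref{main-1}: there the families satisfy $\tau_t\ge t+1$, and the case where both equal $t+1$ is handled in Section~\ref{2}. I therefore plan the proof under the assumption $\tau_t(\mathcal{F})\ge t+2$, say.

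\textbf{Key step.} We bound $|\mathcal{F}|$ by a branching argument like the one in Lemma~\ref{not-contain}. Since $\tau_t(\mathcal{G})\ge t+1$, we have $\mathcal{G}\neq\emptyset$. Fix $G_0\in\mathcal{G}$. Every $F\in\mathcal{F}$ meets $G_0$ in at least $t$ elements, so
\[
\mathcal{F}=\bigcup_{U\in\binom{G_0}{t}}\mathcal{F}_U .
\]
Take any $t$-set $U$ and any $(t+1)$-set $R$.
\begin{itemize}
\item Since $\tau_t(\mathcal{G})\ge t+1$, there is $G\in\mathcal{G}$ with $|G\cap U|\le t-1$. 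Hence every $F\in\mathcal{F}_U$ contains some $(t+1)$-set $R\supseteq U$ with $R\subseteq U\cup G$. This gives at most $\ell$ branches.
\item Since $\tau_t(\mathcal{G})\ge t+2$, no $(t+1)$-set $R$ is a $t$-cover of $\mathcal{G}$. So there is $G'\in\mathcal{G}$ with $|G'\cap R|\le t-1$. By \cite[Lemma~2.7]{Cao-Lu-Lv-Wang-2024} and Lemma~\ref{notknow}(i), as in the proof of Lemma~\ref{not-contain},
\[
|\mathcal{F}_R|\le(\ell-t+1)\binom{n-t-2}{k-t-2}.
\]
\end{itemize}
Combining these,
\[
|\mathcal{F}|\le \binom{\ell}{t}\,\ell\,(\ell-t+1)\binom{n-t-2}{k-t-2},
\]
which is one degree lower in $n$ than $a(k,t)$. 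Symmetrically, for $\mathcal{G}$ we use only $\tau_t(\mathcal{F})\ge t+1$. This yields at most $\binom{k}{t}$ sets $U$, each with at most $k$ branches $R$, and each $|\mathcal{G}_R|\le\binom{n-t-1}{\ell-t-1}$. Hence
\[
|\mathcal{G}|\le k\binom{k}{t}\binom{n-t-1}{\ell-t-1}.
\]
The same bound also follows from Lemma~\ref{not-contain}-type reasoning.

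\textbf{Comparing sizes.} The product is then at most
\[
C\binom{n-t-2}{k-t-2}\binom{n-t-1}{\ell-t-1},
\qquad C=\binom{\ell}{t}\binom{k}{t}\,\ell\,k\,(\ell-t+1).
\]
Using $\binom{n-t-2}{k-t-2}\le\frac{k-t-1}{n-k}\binom{n-t-2}{k-t-1}$ and the bounds $a(k,t)\ge(t+2)\binom{n-t-2}{k-t-1}$ and $a(\ell,t)\ge\binom{n-t-1}{\ell-t-1}$, it suffices that $n-k$ exceeds $C(k-t-1)$ divided by a constant.

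\textbf{Main obstacle.} The main obstacle is that $\binom{\ell}{t}\binom{k}{t}$ is not polynomially bounded the way the hypothesis on $n$ is, which is only of order $(t+1)^2(k+\ell)^2(k-t+1)(\ell-t+1)$. So the counting must be sharpened. First, instead of branching over all of $\binom{G_0}{t}$, we branch over a minimal $t$-cover of $\mathcal{F}$ and use the $(t+1)$-set counting from Lemma~\ref{not-contain}, which gives factors $(t+1)$, $\ell$ and $(\ell-t+1)$. Second, for $\mathcal{G}$ we use Lemma~\ref{not-contain}'s bound
\[
|\mathcal{G}|\le g(|\mathcal{T}_f|,\ell,k,t),
\]
with $\mathcal{T}_f=\emptyset$. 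Then we check the numeric inequality against the assumed lower bound on $n$ via the inequalities of Section~\ref{3}. If $\tau_t(\mathcal{F})>t+1$ makes the "$(t+1)$-cover" starting point unavailable, I would start instead from a $(t+2)$-subset of any $t$-cover of minimal size. I would also bound that size by $k$ plus the maximality argument, accepting an extra factor of about $(k+\ell)$. This matches the $(k+\ell)^2$ appearing in the hypothesis on $n$.
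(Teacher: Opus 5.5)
Your reading of the hypothesis is correct. As literally stated the lemma fails (take $\mathcal{F}=[T\rangle_k$, $\mathcal{G}=[T\rangle_\ell$). The paper's proof, like its use in Theorem~\ref{main-1}, only treats the case $\tau_t(\mathcal{F}),\tau_t(\mathcal{G})\ge t+1$ with at least one of them $\ge t+2$.

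Within that setting, however, your argument has a genuine gap, and it is the one you call the main obstacle. The factor $\binom{\tau_t(\mathcal{F})}{t}$ (resp.\ $\binom{\tau_t(\mathcal{G})}{t}$) produced by the first branching step is never absorbed. Since $\tau_t(\mathcal{F})$ can be as large as $\ell$, this factor is not polynomially bounded. Your suggested repairs do not remove it:
\begin{itemize}
\item Branching over a minimal $t$-cover of $\mathcal{F}$ gives only $t+1$ pieces when $\tau_t(\mathcal{F})=t+1$, and $\binom{\tau_t(\mathcal{F})}{t}$ pieces in general.
\item The bound $|\mathcal{G}|\le g(|\mathcal{T}_f|,\ell,k,t)$ of Lemma~\ref{not-contain} is proved by branching over a $(t+1)$-cover of $\mathcal{G}$. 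It is therefore unavailable when $\tau_t(\mathcal{G})\ge t+2$, in particular when both covering numbers are $\ge t+2$. Where it is available, it gains only one power of $n$. That cannot compensate $\binom{\tau_t(\mathcal{F})}{t}$ when $n$ is only polynomial in $k,\ell,t$.
\item A $(t+2)$-subset of a minimal $t$-cover is not itself a $t$-cover, so it yields no decomposition of $\mathcal{F}$. Bounding the cover size by $\ell$ just brings back $\binom{\ell}{t}$, not an extra factor of order $k+\ell$.
\end{itemize}

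The missing idea is to let a large covering number of one family pay for itself in the bound for the other family. If $\tau_t(\mathcal{F})=m$, then no set of size less than $m$ is a $t$-cover of $\mathcal{F}$. So the branching for $\mathcal{G}$, which you stop at size $t+1$, can be continued up to size $m$. Each step costs a factor at most $k$ and gains a factor of order $n$. This is \cite[Lemma~2.9]{Cao-Lu-Lv-Wang-2024}, which the paper invokes. For $m\ge t+2$ it gives $|\mathcal{G}|\le k^{m-t-2}(k-t+1)^2\binom{\tau_t(\mathcal{G})}{t}\binom{n-m}{\ell-m}$. For $m=t+1$ it gives $|\mathcal{G}|\le (k-t+1)\binom{\tau_t(\mathcal{G})}{t}\binom{n-t-1}{\ell-t-1}$. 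The same holds symmetrically for $|\mathcal{F}|$.

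In the product, regroup $\binom{m}{t}$ (from the bound on $|\mathcal{F}|$) with $k^{m-t-2}(k-t+1)^2\binom{n-m}{\ell-m}$ (from the bound on $|\mathcal{G}|$) into $f(m,k,\ell,n,t)$. By Lemma~\ref{notknow}(ii) this is decreasing in $m$, and the same regrouping works for $\tau_t(\mathcal{G})$. Hence the worst case is covering number $t+2$, where every constant is polynomial: $(t+2)^4(k-t+1)^2(\ell-t+1)^2$, resp.\ $(t+2)^3(k-t+1)(\ell-t+1)^2$, times binomials of total degree at most $k+\ell-2t-3$. The assumed lower bound on $n$ then gives $|\mathcal{F}||\mathcal{G}|<a(k,t)a(\ell,t)$. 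Without this pairing-plus-monotonicity step, your plan does not prove the lemma under the stated bound on $n$.
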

\proof
Applying \cite[Lemma~2.9]{Cao-Lu-Lv-Wang-2024} to $\mathcal{F}$ and $\mathcal{G}$ respectively, we have
\begin{align}\label{s-upper-F1}
	|\mathcal{F}|\leq\left\{
	\begin{array}{ll}
		(\ell-t+1){\tau_t(\mathcal{F})\choose t}{n-t-1\choose k-t-1}, &\text{if}~\tau_t(\mathcal{G})=t+1,\vspace{0.1cm}\\
		{\ell}^{\tau_t(\mathcal{G})-t-2}(\ell-t+1)^2 {\tau_t(\mathcal{F})\choose t}{n-\tau_t(\mathcal{G})\choose k-\tau_t(\mathcal{G})},&\text{if}~\tau_t(\mathcal{G})\geq t+2,
	\end{array}
	\right.
\end{align}
and
\begin{align}\label{s-upper-F2}
	|\mathcal{G}|\leq\left\{
	\begin{array}{ll}
		(k-t+1){\tau_t(\mathcal{G})\choose t}{n-t-1\choose
        \ell-t-1},&\text{if}~\tau_t(\mathcal{F})=t+1,\vspace{0.1cm}\\
		{k}^{\tau_t(\mathcal{F})-t-2}(k-t+1)^2
       {\tau_t(\mathcal{G})\choose t} {n-\tau_t(\mathcal{F})\choose\ell-\tau_t(\mathcal{F})},
       &\text{if}~\tau_t(\mathcal{F})\geq t+2.
	\end{array}
	\right.
\end{align}

Suppose $\tau_t(\mathcal{G})\geq t+2$  and $\tau_t(\mathcal{F})\geq t+2$. From (\ref{s-upper-F1}), (\ref{s-upper-F2}) and Lemma~\ref{notknow}~(ii), we have
\begin{align*}
|\mathcal{F}||\mathcal{G}|
&\leq (t+2)^4(k-t+1)^2(\ell-t+1)^2{n-t-2\choose k-t-2}{n-t-2\choose \ell-t-2}\\
&\leq (t+2)^4\cdot \frac{(k-t+1)^3(\ell-t+1)^3}{(n-k)(n-\ell)}{n-t-2\choose k-t-1}{n-t-2\choose \ell-t-1}\\
&\leq  \frac{(t+2)^2}{(t+1)^4(k+\ell)^2}\cdot(t+2)^2{n-t-2\choose k-t-1}{n-t-2\choose \ell-t-1}<a(k,t)a(\ell,t).
\end{align*}

Suppose $\tau_t(\mathcal{G})\geq t+2$ and $\tau_t(\mathcal{F})=t+1$. From (\ref{s-upper-F1}), (\ref{s-upper-F2}) and Lemma~\ref{notknow}~(ii), we have
\begin{align*}
|\mathcal{F}||\mathcal{G}|&\leq (t+2)^3(k-t+1)(\ell-t+1)^2{n-t-2\choose k-t-2}{n-t-1\choose \ell-t-1}
\\
&\leq (t+2)^3\cdot\frac{(k-t+1)^2(\ell-t+1)^2(n-t-1)}{(n-k)(n-\ell)} {n-t-2\choose k-t-1}{n-t-2\choose \ell-t-1}
\\
&\leq \frac{(t+2)(n-t-1)}{(t+1)^2(n-\ell)}\cdot(t+2)^2{n-t-2\choose k-t-1}{n-t-2\choose \ell-t-1}<a(k,t)a(\ell,t).
\end{align*}
By symmetry, if $\tau_t(\mathcal{F})\geq t+2$ and $\tau_t(\mathcal{G})=t+1$, then $|\mathcal{F}||\mathcal{G}|<a(k,t)a(\ell,t)$ holds, and so this lemma follows.$\qed$

\textbf{\emph{The proof of Theorem \ref{main-1}.}}
Let $\mathcal{F}_1\subseteq {[n]\choose k_1}$ and $\mathcal{F}_2\subseteq {[n]\choose k_2}$ be cross $t$-intersecting families with $\tau_t(\mathcal{F}_1) \geq t+1$ and $\tau_t(\mathcal{F}_2) \geq t+1$ such that $|\mathcal{F}_1||\mathcal{F}_2|$ is maximized.
By Lemma~\ref{cfcg-t+2}, if $(\tau_t(\mathcal{F}_1),\tau_t(\mathcal{F}_2))\neq (t+1,t+1)$, then
$|\mathcal{F}_1||\mathcal{F}_2|<a(k_1,t)a(k_2,t),$ a contradiction.
Hence $\tau_t(\mathcal{F}_1)=\tau_t(\mathcal{F}_2)=t+1$.
Let $\mathcal{T}_1$ and $\mathcal{T}_2$ be the collections of all $t$-covers of $\mathcal{F}_1$ and $\mathcal{F}_2$ with size $t+1$, respectively.

\textbf{Case~1.} Both $\mathcal{T}_f$ and $\mathcal{T}_g$ are $t$-intersecting.

Then $\mathcal{T}_f\cup \mathcal{T}_g$ is $t$-intersecting by Fact \ref{s-tt}.
Suppose $\tau_t(\mathcal{T}_1\cup\mathcal{T}_2)=t$.
By Lemma~\ref{m}~(i), we know that $k_1\geq k_2\geq t+2$, $|\mathcal{T}_1|\leq k_1-t+1$ and $|\mathcal{T}_2|\leq k_2-t+1$.
Suppose $(|\mathcal{T}_1|,|\mathcal{T}_2|)\neq(k_1-t+1,k_2-t+1)$.
It follows from Lemmas~\ref{not-contain}~and~\ref{<hh} (ii)
that
$|\mathcal{F}_1||\mathcal{F}_2|<h(k_1,k_2,t)h(k_2,k_1,t)$, a contradiction.
Hence $(|\mathcal{T}_1|,|\mathcal{T}_2|)=(k_1-t+1,k_2-t+1)$. Then, by Lemma~\ref{m}~(ii), we have
$$(\mathcal{F}_1,\mathcal{F}_2)\simeq (\mathcal{H}(k_1,t;X,Y),\mathcal{H}(k_2,t;Y,X)),$$
where $X\in{[t+1,n]\choose k_1-t+1}$ and $Y\in{[t+1,n] \choose k_2-t+1}$ with $|X\cap Y|\geq 2-\delta_{1,t}$.

Suppose $\tau_t(\mathcal{T}_1\cup\mathcal{T}_2)=t+1$.
By Lemma~\ref{equivalent}, we know that $\max\{|\mathcal{T}_1|,|\mathcal{T}_2|\}\leq t+2$.
If $(|\mathcal{T}_1|,|\mathcal{T}_2|)\neq (t+2,t+2)$, from Lemmas~\ref{not-contain}~and~\ref{<aa}~(i), then $|\mathcal{F}_1||\mathcal{F}_2|<a(k_1,t)a(k_2,t)$, a contradiction.
Hence $|\mathcal{T}_1|=|\mathcal{T}_2|=t+2$. By Lemma~\ref{equivalent}, we have
$$(\mathcal{F}_1,\mathcal{F}_2)\simeq(\mathcal{A}(k_1,t), \mathcal{A}(k_2,t)).$$

\textbf{Case~2.} For $(i,j)\in \{(1,2),(2,1)\}$, $\mathcal{T}_i$ is not $t$-intersecting, and $|\mathcal{T}_j|=1$.

By Proposition \ref{l+1_t+1_t}, we know $k_i\geq t+2$.
Suppose $\mathcal{T}_j\nsubseteq \mathcal{T}_i$. By Lemma~\ref{T=1}~(i), we have $|\mathcal{T}_i|\leq 2(k_i-t+1)$. It follows from Lemmas~\ref{not-contain} and \ref{<hh}~(i) that $|\mathcal{F}_1||\mathcal{F}_2|<h(k_1,k_2,t)h(k_2,k_1,t)$, which yields a contradiction.
Hence, $\mathcal{T}_j\subseteq \mathcal{T}_i$. Consequently, Lemma~\ref{T=1}~(ii) implies that $|\mathcal{T}_i|\leq (t+1)(k_i-t)+1$. If $|\mathcal{T}_i|\leq (t+1)(k_i-t)$, then Lemmas~\ref{not-contain} and \ref{<cc}~(i) give  $|\mathcal{F}_1||\mathcal{F}_2|<c_1(k_i,t)c_2(k_j,k_i,t)$.
If $t=1$ and $|\mathcal{T}_i|=(t+1)(k_i-t)+1$, Lemmas~\ref{not-contain} and \ref{<hh}~(i) further implies $|\mathcal{F}_1||\mathcal{F}_2|<h(k_1,k_2,1)h(k_2,k_1,1)$. In both cases, we obtain a contradiction.

Therefore, $t\geq 2$ and $|\mathcal{T}_i|=(t+1)(k_i-t)+1$.
Then, by Lemma~\ref{T=1}~(ii), $(\mathcal{F}_i,\mathcal{F}_j)\simeq(\mathcal{C}_1(k_i,t), ~\mathcal{C}_2(k_j,t;k_i))$.
If $(i,j)=(2,1)$, then $|\mathcal{F}_1||\mathcal{F}_2|=c_1(k_2,t)c_2(k_1,k_2,t)$, and from Lemma~\ref{cc<hac}, we have
\begin{align*}
	c_1(k_2,t)c_2(k_1,k_2,t)
<\left\{
	\begin{array}{ll}
        h(k_1,k_2,t)h(k_2,k_1,t),&\text{if}~k_1=k_2\geq 2t, \vspace{0.1cm}\\
        a(k_1,t)a(k_2,t),&\text{if}~t+2\leq k_1=k_2\leq 2t-1,\vspace{0.1cm}\\
        c_1(k_1,t)c_2(k_2,k_1,t),&\text{if}~k_1\geq k_2+1.
	\end{array}
	\right.
\end{align*}
In all three cases, we reach a contradiction. Hence, $(i,j)=(1,2)$ and so $$(\mathcal{F}_1,\mathcal{F}_2)\simeq(\mathcal{C}_1(k_1,t), ~\mathcal{C}_2(k_2,t;k_1)).$$

\textbf{Case~3.} For $(i,j)\in\{(1,2),(2,1)\}$, $\mathcal{T}_i$ is not $t$-intersecting, and $\mathcal{T}_j$ is $t$-intersecting with $|\mathcal{T}_j|\geq 2$.

By Proposition \ref{l+1_t+1_t}, we have $k_i\geq t+2$. It follows from Lemma \ref{T=2-C} that $|\mathcal{T}_j|=2$ and $|\mathcal{T}_i|\leq k_i+1$.
If $|\mathcal{T}_i|\leq k_i$, by Lemmas~\ref{not-contain}, \ref{<hh}~(iii), \ref{<aa}~(iii) and \ref{<cc}~(ii), then
\begin{align*}
	|\mathcal{F}_1||\mathcal{F}_2|
<\left\{
	\begin{array}{ll}
        h(k_1,k_2,t)h(k_2,k_1,t),&\text{if}~t=1, \vspace{0.1cm}\\
        a(k_1,t)a(k_2,t),&\text{if}~t\geq 2~\text{and}~t+2\leq k_i\leq 2t+2, \vspace{0.1cm}\\
		c_1(k_i,t)c_2(k_j,k_i,t),&\text{if}~t\geq 2~\text{and}~k_i\geq 2t+3.
	\end{array}
	\right.
\end{align*}
If $|\mathcal{T}_i|=k_i+1$ and $k_j\geq t+2$, by Lemmas~\ref{T=2-C}~(ii) and \ref{lh<hh}, then
\begin{align*}
	|\mathcal{F}_1||\mathcal{F}_2|
<\left\{
	\begin{array}{ll}
		h(k_1,k_2,t)h(k_2,k_1,t),&\text{if}~t=1,~\text{or}~t\geq 2~\text{and}~k_i\geq 4t-1,\vspace{0.1cm}\\
        a(k_1,t)a(k_2,t),&\text{if}~t\geq 2~\text{and}~t+2\leq k_i\leq 4t-2.
	\end{array}
	\right.
\end{align*}
Each of the cases leads to a contradiction. Hence, $|\mathcal{T}_i|=k_i+1$ and $k_j=t+1$. Then $(i,j)=(1,2)$. It follows from Lemma~\ref{T=2-C}~(ii) that
$$(\mathcal{F}_1,\mathcal{F}_2)\simeq (\mathcal{H}(k_1,t;X,Y),~\mathcal{H}(t+1,t;Y ,X)),$$
where $X\in {[t+1,n]\choose k_1}$ and $Y\in {[t+1, n]\choose 2}$ satisfy $|X\cap Y|\geq 2-\delta_{1,t}$.

\textbf{Case~4.} Neither $\mathcal{T}_1$ nor $\mathcal{T}_2$ is $t$-intersecting.

By Lemma \ref{both-non-inter} (i), we have $(|\mathcal{T}_1|, |\mathcal{T}_2|)\in \big\{(2,2),~(2,3),~(3,2),~(3,3),~(2,4),~(4,2)\big\}$.
If $(|\mathcal{T}_1|, |\mathcal{T}_2|)\neq (3,3)$, by Lemmas \ref{not-contain} and \ref{<aa}~(ii),  then
\begin{align*}
|\mathcal{F}_1||\mathcal{F}_2|
&\leq g(|\mathcal{T}_2|,k_1,k_2,t) g(|\mathcal{T}_1|,k_2,k_1,t)\\ &\leq \max\big\{g(2,k_1,k_2,t)g(4,k_2,k_1,t),~ g(4,k_1,k_2,t)g(2,k_2,k_1,t)\big\}<a(k_1,t)a(k_2,t),
\end{align*}
a contradiction. Now suppose $(|\mathcal{T}_1|, |\mathcal{T}_2|)=(3,3)$. Since $|\mathcal{F}_1||\mathcal{F}_2|$ is maximized, it follows from Lemma~\ref{both-non-inter}~(ii) that $t=1$ and
$$(\mathcal{F}_1,\mathcal{F}_2) \simeq (\mathcal{B}(k_1;(1,3,2,4)),~\mathcal{B}(k_2;(1,2,3,4))).$$

This completes the proof of Theorem \ref{main-1}.$\qed$

\textbf{\emph{The proof of Theorem \ref{main-2}.}}
Assume that $\mathcal{F}\subseteq{[n]\choose k}$ is a maximal $t$-intersecting family such that $\tau_t(\mathcal{F})=t+1$. Let $\mathcal{T}$ be the collection of all $t$-covers of $\mathcal{F}$ with size $t+1$, and $M=\cup_{A\in \mathcal{T}}A$.
It follows from {\rm \cite[Lemma~2.1]{Cao-set}} that $\mathcal{T}$ is a $t$-intersecting family, and so $\tau_t(\mathcal{T})\in\{t,t+1\}$.

\textbf{Case~1.} $\tau_t(\mathcal{T})=t+1$.

From Fact~\ref{DRG}, there exists $M_0\in {[n]\choose t+2}$ such that $\mathcal{T}\subseteq {M_0\choose t+1}$, and there exist $A_1,A_2,A_3\in \mathcal{T}$ with $|A_1\cap A_2\cap A_3|\leq t-1$. Let $F\in \mathcal{F}$. Clearly, we know $|F\cap M_0|\geq t$. If $|F\cap M_0|=t$, then $F\cap M_0=F\cap A_1=F\cap A_2=F\cap A_3$, and so $|A_1\cap A_2\cap A_3|\geq t$, a contradiction. Hence, $|F\cap M_0|\geq t+1$, and thus ${M_0\choose t+1}\subseteq \mathcal{T}$, implying that $|\mathcal{T}|=t+2$. It follows from Lemma~\ref{equivalent} that $\mathcal{F}\simeq \mathcal{A}(k,t)=\mathcal{F}'$ in (i).

\textbf{Case~2.}  $\tau_t(\mathcal{T})=t$.

By Lemma \ref{m}~(i) with $\mathcal{F}=\mathcal{G}$, we have $|\mathcal{T}|\leq k-t+1$.

\textbf{Case~2.1.}  $|\mathcal{T}|=k-t+1$. Then $\mathcal{F}\simeq \mathcal{F}'$ in (ii) by Lemma \ref{m}~(iib).

\textbf{Case~2.2.} $|\mathcal{T}|=1$.
Then $\mathcal{T}=\{M\}$ and $[M\rangle_k\subseteq \mathcal{F}$ from \eqref{abs}. Since $\tau_t(\mathcal{F})=t+1$, we have $\mathcal{F}\setminus [M\rangle_k\neq \emptyset$ and $|F\cap M|=t$ for each $F\in \mathcal{F}\setminus [M\rangle_k$. Denote ${M\choose t}=\{T_i\}_{i\in [t+1]}$. Hence, $\mathcal{F}\setminus [M\rangle_k=\cup_{i\in [t+1]}\big(\mathcal{F}_{T_i}\setminus [M\rangle_k\big)$.

For all $i\in [t+1]$ and each $\mathcal{C}'_i\subseteq {[n]\setminus M\choose k-t}$, set $$U(\mathcal{C}'_i)=\big\{T_i~\cup~C : C\in \mathcal{C}'_i\big\}.$$
It is easy to verify that
$\cup_{i\in[t+1]}U(\mathcal{C}'_i)$ is $t$-intersecting if $\mathcal{C}'_1,\mathcal{C}'_2, \dots, \mathcal{C}'_{t+1}$ are pairwise cross intersecting.

Let
$$\mathcal{C}_i=\big\{F\setminus T_i : F\in \mathcal{F}_{T_i}\setminus [M\rangle_k\big\}.$$
Note that $\mathcal{C}_i\subseteq {[n]\setminus M\choose k-t}$ and $\mathcal{F}\setminus [M\rangle_k\subseteq \cup_{i\in[t+1]}U(\mathcal{C}_i)$.
It follows from $\tau_t(\mathcal{F})=t+1$ that at least two families of $\mathcal{C}_1,\mathcal{C}_2, \dots, \mathcal{C}_{t+1}$ are non-empty.
Let $C_i\in \mathcal{C}_i$ and $C_j\in \mathcal{C}_j$ with distinct $i,j\in[t+1]$. Then $T_i \cup C_i$ and $T_j \cup C_j$ are two elements of $\mathcal{F}$. Since
$$t\leq |(T_i \cup C_i)\cap (T_j \cup C_j)|=|T_i\cap T_j|+|C_i\cap C_j|=t-1+|C_i\cap C_j|,$$
we have $|C_i\cap C_j|\geq 1$, implying that $\mathcal{C}_i$ and $\mathcal{C}_j$ are cross intersecting. Hence, $\mathcal{C}_1,\mathcal{C}_2, \dots, \mathcal{C}_{t+1}$ are pairwise cross intersecting. Then $\cup_{i\in[t+1]}U(\mathcal{C}_i)$ is a $t$-intersecting family. Since $\mathcal{F}\setminus [M\rangle_k\subseteq \cup_{i\in[t+1]}U(\mathcal{C}_i)$, by the maximality of $\mathcal{F}$, we have $\mathcal{C}_1,\mathcal{C}_2, \dots, \mathcal{C}_{t+1}$ are maximal pairwise cross intersecting, and $\mathcal{F}=\cup_{i\in[t+1]}U(\mathcal{C}_i)\cup [M\rangle_k\simeq \mathcal{F}^{\prime}$ in (iii).

\textbf{Case~2.3.} $2\leq |\mathcal{T}|\leq k-t$.
Then there exists a $t$-subset $T$ such that $\mathcal{T}\subseteq [T\rangle_{t+1}$ by Fact \ref{DRG},  and $t+2\leq |M|\leq k$.
From \eqref{abs},  we obtain that
$$\cup_{A\in \mathcal{T}}[A\rangle_k=\left\{F\in {[n]\choose k} : T\subseteq F,~|F\cap M|\geq t+1\right\}\subseteq \mathcal{F}.$$
Let $F\in \mathcal{F}\setminus \cup_{A\in \mathcal{T}}[A\rangle_k$ and $A\in \mathcal{T}$.
Then $$t=|F\cap A|=|F\cap(T\cup (A\setminus T))|=|F\cap T|+|F\cap (A\setminus T)|.$$
It follows from $|A\setminus T|=1$ that either $|F\cap T|=t$ and $|F\cap (A\setminus T)|=0$,  or $|F\cap T|=t-1$ and $|F\cap (A\setminus T)|=1$.
If $|F\cap T|=t$, then $F\cap M=F\cap (\cup_{A\in \mathcal{T}}A)=T$, and so
$$F=T\cup C~\text{for~some}~C\in {[n]\setminus M\choose k-t}.$$
If $|F\cap T|=t-1$, then $M\setminus T=\cup_{A\in\mathcal{T}}(A\setminus T)\subseteq F$, and so
$$F=(M\setminus \{x\})\cup D~\text{for~some}~x\in T~\text{and}~D\in{[n]\setminus M\choose k-|M|+1}.$$

For each $\mathcal{C}'\subseteq {[n]\setminus M\choose k-t}$ and $\mathcal{D}'\subseteq {[n]\setminus M\choose k-|M|+1}$, set
$$U_1(\mathcal{C}')=\big\{T\cup C : C\in\mathcal{C}'\big\}~\text{and}~
U_2(\mathcal{D}')=\big\{(M\setminus \{x\})\cup D : x\in T,~ D\in\mathcal{D}'\big\}.$$
It is routine to check that $U_1(\mathcal{C}')\cup U_2(\mathcal{D}')$ is $t$-intersecting if $\mathcal{C}'$ and $\mathcal{D}'$ are cross intersecting.

Let
$$\mathcal{C}=\big\{F\setminus T : F\in \mathcal{F}_T\setminus \big(\cup_{A\in \mathcal{T}}[A\rangle_k\big)\big\}~\text{and}~\mathcal{D} =\big\{F\setminus M : F\in \mathcal{F}\setminus \mathcal{F}_T\big\}.$$
Observe that $\mathcal{C}\subseteq {[n]\setminus M\choose k-t}$, $\mathcal{D}\subseteq {[n]\setminus M\choose k-|M|+1}$ and $\mathcal{F}\setminus \big(\cup_{A\in \mathcal{T}}[A\rangle_k\big)\subseteq U_1(\mathcal{C})\cup U_2(\mathcal{D})$.
It follows from $\tau_t(\mathcal{F})=t+1$ that  $ \mathcal{D}\neq \emptyset$.
If  $\mathcal{C}=\emptyset$, then $\mathcal{C}$ and $\mathcal{D}$ are trivially cross intersecting. Suppose $\mathcal{C}\neq \emptyset$. Let $C\in \mathcal{C}$ and $D\in \mathcal{D}$. Then $C\cup T\in \mathcal{F}$, and $D\cup (M\setminus \{x\})\in \mathcal{F}$ for some $x\in T$. Since
$$t\leq |(C\cup T)\cap (D\cup (M\setminus \{x\}))|=|T\cap (M\setminus \{x\}) |+|C\cap D|=t-1+|C\cap D|,$$
we have $|C\cap D|\geq 1$, implying that $\mathcal{C}$ and $\mathcal{D}$ are cross intersecting.
Therefore, $U_1(\mathcal{C})\cup U_2(\mathcal{D})$ is $t$-intersecting. Since $\mathcal{F} \subseteq U_1(\mathcal{C})\cup U_2(\mathcal{D})\cup \big(\cup_{A\in \mathcal{T}}[A\rangle_k\big)$, by the maximality of $\mathcal{F}$, we have $\mathcal{F} = U_1(\mathcal{C})\cup U_2(\mathcal{D})\cup \big(\cup_{A\in \mathcal{T}}[A\rangle_k\big)$, and $\mathcal{C}$ and $\mathcal{D}$ are maximal cross intersecting.
Hence, $\mathcal{F}\simeq \mathcal{F}'$ in (iv).

This completes the proof of Theorem \ref{main-2}.$\qed$

\section{Some inequalities}\label{3}
In this section, we prove some inequalities used in the proof of Theorem \ref{main-1}. We always assume that $n$, $k$, $\ell$ and $t$ are positive integers with $k\geq t+1$, $\ell\geq t+1$ and $n\geq (t+1)^2(k+\ell)^2(k-t+1)(\ell-t+1)+k+\ell-t$.

\begin{lem}{\rm(\cite[Lemma~2.8]{Cao-Lu-Lv-Wang-2024})} \label{notknow}
Let $w$, $m$ and $s$ be non-negative integers.
\begin{enumerate}[{\rm(i)}]
  \item If $n\geq 2(k-t+1)(\ell-t+1)+t+1$ and $t\leq s<k$, then the function ${\ell-w\choose t-w}{n-s-t+w\choose k-s-t+w}$ is increasing in $w$ whenever $\max\{0,s+t-k\}\leq w\leq t-1$.
  \item If $n\geq (t+1)^2(k-t+1)(\ell-t+1)+t+1$, then $f(m,k,\ell,n,t)=k^{m-t-2}(k-t+1)^2{m\choose t}{n-m\choose \ell-m}$ is decreasing in $m$ whenever $t\leq m\leq \ell$, and $f(m,k,\ell,n,t)<(t+1)(k-t+1){n-t-1\choose \ell-t-1}$ if $t+2\leq m\leq \ell$.
\end{enumerate}
\end{lem}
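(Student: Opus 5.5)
Both parts of Lemma~\ref{notknow} are elementary monotonicity statements, and I would prove each by computing the ratio of consecutive terms.

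For (i), put $\phi(w)={\ell-w\choose t-w}{n-s-t+w\choose k-s-t+w}$. For $w$ and $w+1$ both in the range $[\max\{0,s+t-k\},t-1]$, I will compute $\phi(w+1)/\phi(w)$ and show it is at least $1$. The first factor contributes
$$\frac{{\ell-w-1\choose t-w-1}}{{\ell-w\choose t-w}}=\frac{t-w}{\ell-w}.$$
The second factor contributes
$$\frac{{n-s-t+w+1\choose k-s-t+w+1}}{{n-s-t+w\choose k-s-t+w}}=\frac{n-s-t+w+1}{k-s-t+w+1}.$$
So the ratio is $\frac{(t-w)(n-s-t+w+1)}{(\ell-w)(k-s-t+w+1)}$, and it suffices to show
$$(t-w)(n-s-t+w+1)\geq(\ell-w)(k-s-t+w+1).$$
I will bound the pieces crudely. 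On the left, $t-w\geq 1$, and since $s<k$ we have $n-s-t+w+1\geq n-k-t+1$. On the right, $\ell-w\leq \ell$. For the last factor, $s\geq t$ and $w\leq t-1$ give $k-s-t+w+1\leq k-t$. It is therefore enough that $n-k-t+1\geq \ell(k-t)$. This follows from $n\geq 2(k-t+1)(\ell-t+1)+t+1$ after checking that $2(k-t+1)(\ell-t+1)\geq \ell(k-t)+k-2t$; I expect a short expansion using $k,\ell\geq t+1$ to confirm this. If the crude bound $\ell-w\leq\ell$ turns out too weak for small $t$, I will keep the factor $(t-w)$ and use $\frac{\ell-w}{t-w}\leq \ell-t+1$, which is maximal at $w=t-1$, giving the sharper sufficient condition $n-k-t+1\geq(\ell-t+1)(k-t)$.

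For (ii), I will compute $f(m+1)/f(m)$ for $t\leq m<\ell$ and show it is below $1$:
$$\frac{f(m+1,k,\ell,n,t)}{f(m,k,\ell,n,t)}=k\cdot\frac{m+1}{m+1-t}\cdot\frac{\ell-m}{n-m}.$$
Bounding $\frac{m+1}{m+1-t}\leq t+1$ and $\ell-m\leq \ell-t$, the ratio is at most $\frac{k(t+1)(\ell-t)}{n-\ell}$. This is below $1$ under the hypothesis $n\geq (t+1)^2(k-t+1)(\ell-t+1)+t+1$, because $k\leq (t+1)(k-t+1)$ when $k\geq t+1$. Hence $f$ is decreasing in $m$ on $[t,\ell]$.

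For the final claim of (ii), monotonicity lets me take $m=t+2$, so it suffices to prove
$$k(k-t+1)^2{t+2\choose t}{n-t-2\choose \ell-t-2}<(t+1)(k-t+1){n-t-1\choose \ell-t-1}.$$
The binomials relate through ${n-t-1\choose \ell-t-1}=\frac{n-t-1}{\ell-t-1}{n-t-2\choose \ell-t-2}$, valid when $\ell\geq t+2$, which the range $t+2\leq m\leq\ell$ guarantees. The inequality therefore reduces to
$$k(k-t+1)\frac{(t+2)(t+1)}{2}(\ell-t-1)<(t+1)(n-t-1).$$
This holds with room to spare from the $n$-hypothesis, again via $k\leq (t+1)(k-t+1)$. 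The main obstacle is only bookkeeping: making sure every constant fits under the stated lower bound on $n$ in the boundary cases, such as $k=t+1$ in (i) or $m=\ell-1$ in (ii). These I would check directly.
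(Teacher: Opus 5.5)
The paper does not prove this lemma (it is imported from \cite{Cao-Lu-Lv-Wang-2024}). Your consecutive-ratio approach is the natural one, and both ratio formulas are correct. The gap is in the closing estimates: several of them do not follow from the stated lower bounds on $n$, so as written neither part is established. In (i) you bound $n-s-t+w+1$ from below using $s\le k-1$, and $k-s-t+w+1$ from above using $s\ge t$. This pulls $s$ in opposite directions, and the resulting sufficient conditions fail when $t$ is large compared with $k-t$ and $\ell-t$ (large $t$, not small). Take $k=t+2$, $\ell=t+1$, $s=t$, so $w\in\{t-2,t-1\}$ is a genuine range. The inequality you expect to confirm, $2(k-t+1)(\ell-t+1)\ge \ell(k-t)+k-2t$, reads $12\ge t+4$. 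The correct target has $k-2$ rather than $k-2t$ and reads $12\ge 3t+2$. Your fallback $n-k-t+1\ge(\ell-t+1)(k-t)$ needs $n\ge 2t+5$, whereas the hypothesis only guarantees $n\ge t+13$. The missing idea is to keep the coupling. With $K=k-s-t+w+1$, the other factor is exactly $K+(n-k)$, and $w+1\le t-1$, $s\ge t$ give $K\le k-t-1$. Hence $(t-w)(K+n-k)-(\ell-w)K=(t-w)(n-k)-(\ell-t)K\ge (n-k)-(\ell-t)(k-t-1)\ge(\ell-t+1)(k-t+3)>0$.

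In (ii), for the final claim you evaluated $f(t+2,k,\ell,n,t)$ with a spurious factor $k$; in fact $k^{m-t-2}=1$ at $m=t+2$. With that factor, your reduced inequality $k(k-t+1)\binom{t+2}{2}(\ell-t-1)<(t+1)(n-t-1)$ is quadratic in $k$ against a hypothesis that is linear in $k$. It is false, e.g., for $t=1$, $k=100$, $\ell=10$, $n=4002$ ($240000$ versus $8000$). Without the spurious factor, the claim reduces to $(k-t+1)\frac{t+2}{2}(\ell-t-1)<n-t-1$, which is immediate from $n-t-1\ge(t+1)^2(k-t+1)(\ell-t+1)$. Similarly, in the monotonicity step, substituting $k\le (t+1)(k-t+1)$ leaves you needing $(t+1)^2(k-t+1)(\ell-t)<n-\ell$. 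This fails when $\ell$ is large relative to $k$ (e.g. $t=1$, $k=2$, $\ell=100$, $n=802$: $792$ versus $702$). The needed bound $k(t+1)(\ell-t)<n-\ell$ is nevertheless true if you use $(t+1)(k-t+1)\ge k+1$ instead. Then $n-\ell-k(t+1)(\ell-t)\ge (t+1)(k+\ell-t+1)+t+1-\ell>0$. Each repair is short, but each of these steps has to be redone for the argument to hold under the lemma's hypotheses.
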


Let $g(m,x,y,t)$, $a(x,t)$, $c_1(y,t)$, $c_2(x,y,t)$ and $h(x,y,t)$ be as in \eqref{equ-3}--\eqref{equ-2}. Set
\begin{align}
\widetilde{a}(x,t)&=a(x,t){n-t-1\choose x-t-1}^{-1},\label{equ-4}\\
\widetilde{h}(x,y,t)&=h(x,y,t){n-t-1\choose x-t-1}^{-1},\label{equ-5}\\
\widetilde{c}_1(y,t)\widetilde{c}_2(x,y,t)&= c_1(y,t)c_2(x,y,t){n-t-1\choose y-t-1}^{-1}{n-t-1\choose x-t-1}^{-1},\label{equ-7}\\
\widetilde{g}(m,x,y,t)&=g(m,x,y,t) {n-t-1\choose x-t-1}^{-1}.\label{equ-6}
\end{align}

\begin{lem}\label{cor-3.2}
The following inequalities hold.
\begin{enumerate}[{\rm(i)}]
  \item $\widetilde{h}(k,\ell ,t)\widetilde{h}(\ell,k,t) >(k-t+1)(\ell-t+1)-\frac{1}{t+1}.$
  \item $\widetilde{a}(k,t)\widetilde{a}(\ell,t) >\left(t+\frac{19}{10}\right)^2$.
  \item If $\ell\geq t+2$, then $1+\frac{1}{(k+\ell)^2(k-t+1)}> \widetilde{c}_1(\ell,t)\widetilde{c}_2(k,\ell,t) -(t+1)(\ell-t)>1-\frac{\ell-t}{(t+1)(k+\ell)^2}$.
  \item $\widetilde{g}(m,k,\ell,t) \widetilde{g}(m',\ell,k,t) \leq \left(m+\frac{1} {16}\right) \left(m'+\frac{1} {16}\right)$, where $m$ and $m'$ are positive integers.
\end{enumerate}
\end{lem}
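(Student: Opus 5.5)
The four inequalities are all asymptotic comparisons of ratios of binomial coefficients. The plan is to express every term as a multiple of ${n-t-1\choose x-t-1}$ and control the error terms with the crude bounds
\[
\frac{{n-s\choose x-s}}{{n-t-1\choose x-t-1}}\le \Big(\frac{x}{n}\Big)^{s-t-1}\quad (s\ge t+1)
\]
and
\[
{n-t-2\choose x-t-1}\Big/{n-t-1\choose x-t-1}=\frac{n-x}{n-t-1}\ge 1-\frac{x}{n}.
\]
The hypothesis $n\ge (t+1)^2(k+\ell)^2(k-t+1)(\ell-t+1)+k+\ell-t$ makes every relative error at most of order $1/((t+1)^2(k+\ell)^2(k-t+1)(\ell-t+1))$, which absorbs all the constant slack in the claimed inequalities.

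\textbf{(i).} Write ${n-t\choose k-t}-{n-\ell-1\choose k-t}=\sum_{j=0}^{\ell-t}{n-t-1-j\choose k-t-1}$. This is at least $(\ell-t+1){n-\ell-1\choose k-t-1}$, hence at least $(\ell-t+1)\big(1-\frac{(\ell-t)(k-t-1)}{n-\ell}\big){n-t-1\choose k-t-1}$. So $\widetilde h(k,\ell,t)\ge(\ell-t+1)(1-\varepsilon_1)$, and symmetrically $\widetilde h(\ell,k,t)\ge(k-t+1)(1-\varepsilon_2)$. The product is then at least $(k-t+1)(\ell-t+1)(1-\varepsilon_1-\varepsilon_2)$. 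It remains to check that $(k-t+1)(\ell-t+1)(\varepsilon_1+\varepsilon_2)<\frac1{t+1}$, which follows from the lower bound on $n$.

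\textbf{(ii).} We have $\widetilde a(k,t)\ge (t+2)\frac{n-k}{n-t-1}\ge (t+2)(1-\frac{k}{n})$. Hence the product is at least $(t+2)^2(1-\frac{k+\ell}{n})>(t+1.9)^2$, since $(t+2)^2-(t+1.9)^2=0.2t+0.39$ dominates $(t+2)^2(k+\ell)/n$.

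\textbf{(iv).} Bound $(\ell)(t+1)(\ell-t+1){n-t-2\choose k-t-2}/{n-t-1\choose k-t-1}\le \ell(t+1)(\ell-t+1)\frac{k}{n}\le\frac1{16}$. Then $\widetilde g(m,\cdot)\le m+\frac1{16}$ for each factor. (The definition of $g$ in the paper has $y$ as its coefficient, i.e. $\ell$; $m\ge1$ is used only for positivity.)

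\textbf{(iii)} is the delicate part, because both a lower and an upper bound are required with explicit small errors. Here $\widetilde c_1(\ell,t)=1+(t+1)/{n-t-1\choose \ell-t-1}$, with the second term at most $(t+1)/(n-t-1)$ since $\ell\ge t+2$. Write $\widetilde c_2=1+(t+1)S$, where
\[
S=\Big[{n-t-1\choose k-t}-{n-\ell-1\choose k-t}\Big]\Big/{n-t-1\choose k-t-1}=\sum_{j=1}^{\ell-t}{n-t-1-j\choose k-t-1}\Big/{n-t-1\choose k-t-1}.
\]
Each summand lies in $[1-\frac{j(k-t-1)}{n-\ell},1]$. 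Therefore
\[
\ell-t-\frac{(\ell-t)^2k}{n-\ell}\le S\le \ell-t .
\]
Multiplying out gives
\[
\widetilde c_1\widetilde c_2=(1+(t+1)S)(1+\delta),\qquad \delta\le\frac{t+1}{n-t-1}.
\]
For the upper bound, $(1+(t+1)(\ell-t))\delta<\frac{1}{(k+\ell)^2(k-t+1)}$. For the lower bound, $\widetilde c_1\widetilde c_2-(t+1)(\ell-t)\ge 1-(t+1)\frac{(\ell-t)^2k}{n-\ell}$, and we check this is greater than $1-\frac{\ell-t}{(t+1)(k+\ell)^2}$. That check reduces to $(t+1)^2(\ell-t)k(k+\ell)^2<n-\ell$, which holds by the bound on $n$ (using $k\le (k-t+1)t$-type slack if needed, or directly since $k(\ell-t)\le (k-t+1)(\ell-t+1)\cdot$const). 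This bookkeeping is the main obstacle: verifying that each constant fits under the given $n$ threshold. I would do it by bounding $k\le t(k-t+1)$ and $\ell-t<\ell-t+1$ where necessary, with $t+1$ factors traded accordingly.
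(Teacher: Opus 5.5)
You have a genuine gap in the lower bound of (iii). Parts (i), (ii), (iv) and the upper bound in (iii) are fine and follow the paper's route: normalize by $\binom{n-t-1}{x-t-1}$ and bound binomial ratios, with your telescoping and Bernoulli-type estimates standing in for the Frankl--Wang inequality the paper quotes.

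\textbf{The failing step.} Your per-summand estimate gives $S\ge (\ell-t)-\frac{(\ell-t)^2(k-t-1)}{n-\ell}$. You then weaken $k-t-1$ to $k$, and the resulting requirement $(t+1)^2(\ell-t)k(k+\ell)^2<n-\ell$ does \emph{not} follow from $n\ge (t+1)^2(k+\ell)^2(k-t+1)(\ell-t+1)+k+\ell-t$. At the threshold it amounts to $k(\ell-t)\le (k-t+1)(\ell-t+1)$, i.e.\ $(t-1)(\ell-t)\le k-t+1$. This fails as soon as $t\ge 2$ and $\ell$ is large relative to $k$. The paper needs that regime, since the lower bound is applied with $\ell>k$, e.g.\ in Lemma~\ref{cc<hac}(iii). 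Concretely, take $t=2$, $k=4$, $\ell=20$ and $n=295510$, the threshold. Then $(t+1)^2(\ell-t)k(k+\ell)^2=373248$, while $n-\ell=295490$.

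Your suggested patches do not rescue this. Using $k\le t(k-t+1)$ introduces an extra factor $t$ that the fixed threshold cannot absorb, and there is no spare ``const''. The threshold is calibrated exactly to the factor $(k-t-1)(\ell-t)$, with margin only up to $(k-t+1)(\ell-t+1)$, so there is no room to spend. Your opening heuristic is where the misjudgment starts. Relative errors such as $\frac{(\ell-t)(k-t-1)}{n}$ are of order $\frac{1}{(t+1)^2(k+\ell)^2}$, not $\frac{1}{(t+1)^2(k+\ell)^2(k-t+1)(\ell-t+1)}$.

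\textbf{The repair.} Keep the factor $k-t-1$, as the paper does via $\binom{n-\ell-1}{k-t-1}\ge\bigl(1-\frac{(k-t-1)(\ell-t)}{n-t-1}\bigr)\binom{n-t-1}{k-t-1}$; your own summand bound already gives this. The check then becomes $(t+1)^2(\ell-t)(k-t-1)(k+\ell)^2<n-\ell$. This holds because $(\ell-t)(k-t-1)<(k-t+1)(\ell-t+1)$ and $n-\ell\ge (t+1)^2(k+\ell)^2(k-t+1)(\ell-t+1)+k-t$. Strictness is preserved, since $\widetilde c_1(\ell,t)>1$ gives $\widetilde c_1\widetilde c_2>1+(t+1)S$.
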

\proof Let $x$ and $y$ be integers with $\min\{x,y\}\geq t+1$. By {\rm \cite[Proposition~1.6]{Frankl-Wang-2022}},
we obtain that
 \begin{align}\label{y-t}
 {n-y-1\choose x-t-1}\geq \left(1- \frac{(x-t-1)(y-t)}{n-t-1}\right){n-t-1\choose x-t-1}.
 \end{align}

(i) It follows from Pascal's formula and \eqref{y-t} that
\begin{align*}
\widetilde{h}(x,y,t)&=\left(\sum_{i=t+1}^{y+1}{n-i\choose x-t-1}+t\right){n-t-1\choose x-t-1}^{-1}\notag \\
&> (y-t+1){n-y-1\choose x-t-1}{n-t-1\choose x-t-1}^{-1}\notag\\
&\geq (y-t+1)\left(1-\frac{(x-t)(y-t)}{n-t-1}\right).
\end{align*}
Then
\begin{align*}
\widetilde{h}(k,\ell ,t)\widetilde{h}(\ell,k,t) &>(k-t+1)(\ell-t+1) \left(1-\frac{(k-t)(\ell-t)}{n-t-1}\right)^2\\
&>(k-t+1)(\ell-t+1) \left(1-\frac{2(k-t)(\ell-t)}{n-t-1}\right)\\
&\geq(k-t+1)(\ell-t+1) -\frac{1}{t+1},
\end{align*}
and thus (i) holds.

(ii) It follows from Pascal's formula and \eqref{equ-4} that
$$\widetilde{a}(x,t)=a(x,t){n-t-1\choose x-t-1}^{-1} =t+2-\frac{(t+1)(x-t-1)}{n-t-1}>t+\frac{19}{10},$$
whenever $n\geq 10(t+1)(x-t-1)+t+2$.
Then (ii) holds.

(iii) From Pascal's formula, we have $c_2(k,\ell,t)=(t+1)\sum_{i=t+2}^{\ell+1}{n-i\choose k-t-1}+{n-t-1\choose k-t-1}$, and so
\begin{align*}
\big((t+1)(\ell-t)+1\big){n-t-1\choose k-t-1}> c_2(k,\ell,t)>(t+1)(\ell-t){n-\ell-1\choose k-t-1}+{n-t-1\choose k-t-1}.
\end{align*}
As $\ell\geq t+2$ and $c_1(\ell,t){n-t-1\choose \ell-t-1}^{-1}=1+(t+1){n-t-1\choose \ell-t-1}^{-1}$, we have
\begin{align*}
1+\frac{t+1}{n-t-1}>c_1(\ell,t){n-t-1\choose \ell-t-1}^{-1}>1.
\end{align*}
Since $n\geq (t+1)^2(k+\ell)^2(k-t+1)(\ell-t+1)+k+\ell-t$, we can obtain
\begin{align*}
\widetilde{c}_1(\ell,t)\widetilde{c}_2(k,\ell,t) &<\left((t+1)(\ell-t)+1\right) \left(1+\frac{t+1}{n-t-1}\right)\\ &<(t+1)(\ell-t)+1+\frac{1}{(k+\ell)^2(k-t+1)},
\end{align*}
and the former part of (iii) holds.
From \eqref{y-t}, it follows that
\begin{align*}
\widetilde{c}_1(\ell,t)\widetilde{c}_2(k,\ell,t) &> (t+1)(\ell-t)\left(1-\frac{(k-t-1)(\ell-t)}{n-t-1}\right)+1\\
&>(t+1)(\ell-t)+1-\frac{\ell-t} {(t+1)(k+\ell)^2},
\end{align*}
and the latter part of (iii) holds.

(iv) By direct computation, if $\min\{x,y\}\geq t+1$ and $n\geq (t+1)^2(x+y)^2(x-t+1)(y-t+1)+x+y-t$, then
\begin{align}\label{gg}
\widetilde{g}(m,x,y,t) &=m+\frac{y(t+1)(y-t+1)(x-t-1)}{n-t-1}\leq m+\frac{y}{(t+1)(x+y)^2}.
\end{align}
Since $x\geq t+1\geq 2$, we have $\frac{y}{(t+1)(x+y)^2}\leq \frac{y}{2(y+2)^2}$. The function $\frac{y}{2(y+2)^2}$ is monotonically decreasing for $y\geq 2$. Hence, $\widetilde{g}(m,x,y,t)\leq m+\frac{1}{16}$, and (iv) holds.
$\qed$

\begin{lem}\label{<hh}
Let $\ell\geq t+2$. The following inequalities hold.
\begin{enumerate}[{\rm(i)}]
  \item $g(2(\ell-t+1),k,\ell,t)g(1,\ell,k,t) < h(k,\ell,t)h(\ell,k,t)$.
  \item $g(\ell-t,k,\ell,t)g(k-t+1,\ell,k,t) < h(k,\ell,t)h(\ell,k,t)$.
  \item $g(\ell,k,\ell,1)g(2,\ell,k,1) < h(k,\ell,1)h(\ell,k,1)$.
\end{enumerate}
\end{lem}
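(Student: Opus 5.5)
The plan is to normalize everything by ${n-t-1\choose k-t-1}{n-t-1\choose \ell-t-1}$ and compare leading constants. By \eqref{equ-5} and \eqref{equ-6}, each inequality $g(m,k,\ell,t)g(m',\ell,k,t)<h(k,\ell,t)h(\ell,k,t)$ is equivalent to
$$\widetilde g(m,k,\ell,t)\,\widetilde g(m',\ell,k,t)<\widetilde h(k,\ell,t)\,\widetilde h(\ell,k,t).$$
Lemma~\ref{cor-3.2}~(iv) bounds the left side by $(m+\frac1{16})(m'+\frac1{16})$. Lemma~\ref{cor-3.2}~(i) bounds the right side from below by $(k-t+1)(\ell-t+1)-\frac1{t+1}$. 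It therefore suffices to verify, in each case, the purely numerical inequality
$$\Big(m+\tfrac1{16}\Big)\Big(m'+\tfrac1{16}\Big)\le (k-t+1)(\ell-t+1)-\tfrac1{t+1}.$$

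For (i), take $m=2(\ell-t+1)$, $m'=1$, and write $p=k-t+1\ge2$, $q=\ell-t+1\ge3$. The left side is $\frac{17}{16}(2q+\frac1{16})\approx 2.125q+0.07$. The right side is at least $pq-\frac12$. When $p\ge3$ this is immediate, since $3q-\frac12>2.125q+0.07$ for $q\ge1$.

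When $p=2$, i.e.\ $k=t+1$, the right side is only about $2q$, so this crude comparison fails. Here I would drop (iv) and (i) and compute directly. With $k=t+1$, the second term of $g(\cdot,k,\ell,t)$ vanishes, because ${n-t-2\choose -1}=0$. Hence $g(2q,k,\ell,t)=2q$ and $h(k,\ell,t)=(\ell-t+1)+t=\ell+1$. On the other side, $g(1,\ell,k,t)={n-t-1\choose \ell-t-1}+k(t+1)\cdot2{n-t-2\choose \ell-t-2}$, while $h(\ell,k,t)\approx 2{n-t-1\choose\ell-t-1}$. So the left side is about $2q$ and the right side about $2(\ell+1)$, and since $\ell+1>q$ the inequality holds comfortably once the error terms are controlled using $n$ large. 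In the intended application of (i) (Lemma~\ref{T=1}~(i)), $k_i\ge t+2$ holds for the index attached to $\ell$, but $k$ may equal $t+1$, so this subcase must be handled.

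For (ii), take $m=\ell-t=q-1$, $m'=k-t+1=p$. We need $(q-\frac{15}{16})(p+\frac1{16})\le pq-\frac1{t+1}$. The slack is $pq-(q-\frac{15}{16})(p+\frac1{16})=\frac{15}{16}p-\frac{q}{16}+\frac{15}{256}$, which must be at least $\frac12$. This holds as long as $q$ is not much larger than $p$, but it can fail when $p=2$ and $q$ is large.

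In that regime I would again use the exact structure. When $k=t+1$, the factor $g(q-1,k,\ell,t)=q-1$ while $h(k,\ell,t)=\ell+1$. These are exact constants, so the ratio is $(q-1)/(\ell+1)$ rather than requiring any $\frac1{16}$ slack, and the remaining factor comparison $g(p,\ell,k,t)$ versus $h(\ell,k,t)$ only needs $\widetilde g\le p+\frac1{16}$ against $\widetilde h>p-\varepsilon$, with a small $\varepsilon$ obtained from \eqref{y-t}. If $k\ge t+2$, then $p\ge3$, and I would redo the slack analysis using the sharper error terms in \eqref{gg} and in the proof of Lemma~\ref{cor-3.2}~(i). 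Those errors are $O\big(\frac{1}{(t+1)(k+\ell)^2}\big)$, tiny compared with $\frac1{16}$, which rescues the large-$q$ case.

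For (iii), with $t=1$, take $m=\ell$, $m'=2$, so $p=k$ and $q=\ell$. We need roughly $2\ell<k\ell$ up to small errors. For $k\ge3$ this is immediate from Lemma~\ref{cor-3.2}. For $k=2$ I would use the exact computation as above: $g(\ell,2,\ell,1)=\ell$ and $h(2,\ell,1)=\ell+1$, with the other factor compared via the sharp error bounds.

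The main obstacle is the boundary regime $k=t+1$ (or $p$ small relative to $q$), where the $\frac1{16}$ slack of Lemma~\ref{cor-3.2}~(iv) is too coarse. Here I expect to return to the exact forms \eqref{equ-3} and \eqref{equ-2} with the precise error estimates, namely $\widetilde g=m+O(\frac{1}{(t+1)(k+\ell)^2})$ and the analogous expression for $\widetilde h$.
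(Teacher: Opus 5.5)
Your strategy matches the paper's: normalize, bound via Lemma~\ref{cor-3.2}~(i) and the $\widetilde g$ estimates, and treat $k=t+1$ (resp.\ $k=2$) by exact computation. Parts (i) and (iii) are handled essentially as in the paper. For (iii) with $k\ge 3$ you argue directly from Lemma~\ref{cor-3.2}~(i),(iv), while the paper applies (ii) with $k$ and $\ell$ interchanged; both work.

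There is a genuine gap in (ii), subcase $k=t+1$. You say that, since the first factors are exact ($\widetilde g(\ell-t,t+1,\ell,t)=\ell-t$ and $\widetilde h(t+1,\ell,t)=\ell+1$), the second factors only need $\widetilde g\le p+\frac1{16}$ against $\widetilde h>p-\varepsilon$. This step fails. With $p=2$ it requires $(\ell-t)(2+\tfrac1{16})<(\ell+1)(2-\varepsilon)$, i.e.\ $33(\ell-t)<32(\ell+1)$. That is false for every $\ell\ge 33t+32$, for instance $t=1$, $k=2$, $\ell\ge 65$. The reason is that the gain $\frac{\ell-t}{\ell+1}=1-\frac{t+1}{\ell+1}$ tends to $1$, so any fixed multiplicative loss such as $\frac{33}{32}$ eventually dominates. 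The same caution applies to your subcase $k=t+1$ of (i). There the margin is only $2t$ against quantities of size about $2\ell$, so ``comfortably'' holds only with the $n$-dependent bound $\widetilde g(1,\ell,t+1,t)\le 1+\frac{1}{(\ell+t+1)^2}$ from \eqref{gg}, which is what the paper uses, and not with $1+\frac1{16}$.

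The fix is to use \eqref{gg} itself rather than Lemma~\ref{cor-3.2}~(iv). Then (ii) needs no case split at all, which is exactly the paper's proof. By \eqref{gg},
\[
\widetilde g(\ell-t,k,\ell,t)\,\widetilde g(k-t+1,\ell,k,t)\le (k-t+1)(\ell-t)+\frac{\ell(k-t+1)+k(\ell-t)}{(t+1)(k+\ell)^2}+\frac{k\ell}{(t+1)^2(k+\ell)^4}.
\]
The cross terms are at most $\frac{2}{t+1}$, independently of $\ell$. Lemma~\ref{cor-3.2}~(i) then leaves the slack $(k-t+1)-\frac{3}{t+1}-\frac{1}{(t+1)^2(k+\ell)^2}>0$ for every $k\ge t+1$.

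Your stated reason that this rescues the large-$\ell$ case is also inaccurate. You say the errors in \eqref{gg} are ``$O\big(\frac{1}{(t+1)(k+\ell)^2}\big)$, tiny compared with $\frac1{16}$''. In fact the bound on the additive error of $\widetilde g(m,x,y,t)$ is $\frac{y}{(t+1)(x+y)^2}$, which equals $\frac1{16}$ when $t=1$ and $x=y=2$. What actually rescues the large-$\ell$ case is that the error attached to the factor multiplying $\ell-t$ decays like $\frac{k}{(t+1)(k+\ell)^2}$. The resulting cross term is therefore bounded by a constant, not growing with $\ell$.
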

\proof
By \eqref{equ-5} and \eqref{equ-6}, it suffices to show $\widetilde{h}(k,\ell,t) \widetilde{h}(\ell,k,t)> \widetilde{g}\left(m,k,\ell,t\right) \widetilde{g}(m',\ell,k,t),$
where $m$ and $m'$ are related integers in (i), (ii) and (iii).

(i)
Suppose $k\geq t+2$.
It follows from Lemma \ref{cor-3.2} (i) and (iv) that
$\widetilde{h}(k,\ell,t)\widetilde{h}(\ell,k,t) -\widetilde{g}\left(2(\ell-t+1) ,k,\ell,t\right)\widetilde{g}(1,\ell,k,t)>(k-t-\frac{9}{8})(\ell-t+1)-
\frac{1}{t+1}- \frac{17}{256}>0.$

Suppose $k=t+1$.  By \eqref{gg}, we have $$\widetilde{g}\left(2(\ell-t+1),t+1,\ell,t\right)=2(\ell-t+1) ~\text{and}~\widetilde{g}(1,\ell,t+1,t)\leq 1+\frac{1}{(\ell+t+1)^2}.$$
By \eqref{equ-5}, we have $\widetilde{h}(t+1,\ell,t)=\ell+1$,
and
$$\widetilde{h}(\ell,t+1,t)
>2{n-t-2\choose \ell-t-1}{n-t-1\choose \ell-t-1}^{-1}
= 2\left(1-\frac{\ell-t-1}{n-t-1}\right) \geq 2-\frac{1}{4(\ell+t+1)^2},$$
provided that $n\geq 2(t+1)^2(\ell+t+1)^2(\ell-t+1)+\ell+1$.
Hence
\begin{align*}
(\ell+1)\left(2-\frac{1}{4(\ell+t+1)^2}\right) -2(\ell-t+1)\left(1+\frac{1}{(\ell+t+1)^2}\right)= 2t-\frac{9(\ell+1)-8t}{4(\ell+t+1)^2}
>0,
\end{align*}
implying (i) holds.

(ii)
By \eqref{gg}, we have
\begin{align*}
&~~~~\widetilde{g}(\ell-t,k,\ell,t) \widetilde{g}(k-t+1,\ell,k,t)\\
&\leq(k-t+1)(\ell-t)+\frac{\ell(k-t+1)}{(t+1)(k+\ell)^2} +\frac{k(\ell-t)}{(t+1)(k+\ell)^2} +\frac{k\ell}{(t+1)^2(k+\ell)^4}\\
&< (k-t+1)(\ell-t)+\frac{2}{t+1}+\frac{1}{(t+1)^2(k+\ell)^2}.
\end{align*}

Hence, from Lemma \ref{cor-3.2} (i) and $k\geq t+1$, it follows that
\begin{align*}
&~~~~\widetilde{h}(k,\ell,t) \widetilde{h}(\ell,k,t)- \widetilde{g}(\ell-t,k,\ell,t) \widetilde{g}(k-t+1,\ell,k,t)\\
&> k-t+1-\frac{3}{t+1}-\frac{1}{(t+1)^2(k+\ell)^2}>0,
\end{align*}
and (ii) holds.

(iii) Note that $t=1$, and so $k\geq 2$ and $\ell\geq 3$.

Suppose $k\geq 3$. Then from \eqref{equ-3} and (ii),  it follows that $$g(\ell,k,\ell,1)g(2,\ell,k,1)\leq g(\ell,k,\ell,1)g(k-1,\ell,k,1)<h(k,\ell,1)h(\ell,k,1).$$

Suppose $k=2$. Then $n\geq 8\ell(\ell+2)^2+\ell+1$. By \eqref{gg}, we have $\widetilde{g}(\ell,2,\ell,1)=\ell$ and $\widetilde{g}(2,\ell,2,1)\leq 2+\frac{1}{(\ell+2)^2}.$
By the proof of (i), we obtain that
$\widetilde{h}(2,\ell,1)\widetilde{h}(\ell,2,1) >(\ell+1)\left(2-\frac{1}{4(\ell+2)^2}\right)$.
Since
\begin{align*}
(\ell+1)\left(2-\frac{1}{4(\ell+2)^2}\right) -\ell\left(2+\frac{1}{(\ell+2)^2}\right)= 2-\frac{5\ell+1}{4(\ell+2)^2}>0,
\end{align*}
we have $\widetilde{h}(2,\ell,1)\widetilde{h}(\ell,2,1) >\widetilde{g}(\ell,2,\ell,1)\widetilde{g}(2,\ell,2,1)$. Then (iii) follows.$\qed$

\begin{lem}\label{<aa}
The following inequalities hold.
\begin{enumerate}[{\rm(i)}]
  \item $g(t+2,k,\ell,t)g(t+1,\ell,k,t) < a(k,t)a(\ell,t)$.
  \item $g(4,k,\ell,t)g(2,\ell,k,t) < a(k,t)a(\ell,t)$.
  \item $g(\ell,k,\ell,t)g(2,\ell,k,t) < a(k,t)a(\ell,t)$ for $\ell\leq 2t+2$.
\end{enumerate}
\end{lem}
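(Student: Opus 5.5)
The plan is to normalize every quantity as in \eqref{equ-4} and \eqref{equ-6}, exactly as in the proof of Lemma \ref{<hh}. Since $\widetilde{g}(m,k,\ell,t)$ and $\widetilde{a}(k,t)$ are both obtained by dividing by ${n-t-1\choose k-t-1}$, and $\widetilde{g}(m',\ell,k,t)$ and $\widetilde{a}(\ell,t)$ by ${n-t-1\choose \ell-t-1}$, each inequality $g(m,k,\ell,t)g(m',\ell,k,t)<a(k,t)a(\ell,t)$ is equivalent to
\[
\widetilde{g}(m,k,\ell,t)\,\widetilde{g}(m',\ell,k,t)<\widetilde{a}(k,t)\,\widetilde{a}(\ell,t).
\]
I then bound the left side from above by Lemma \ref{cor-3.2}~(iv), which gives $\bigl(m+\frac{1}{16}\bigr)\bigl(m'+\frac{1}{16}\bigr)$. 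I bound the right side from below by Lemma \ref{cor-3.2}~(ii), which gives $\bigl(t+\frac{19}{10}\bigr)^2$. This reduces everything to elementary inequalities in $t$ (and in $\ell$ for part (iii)).

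For (i), with $(m,m')=(t+2,t+1)$, the comparison is $\bigl(t+\frac{33}{16}\bigr)\bigl(t+\frac{17}{16}\bigr)$ against $\bigl(t+\frac{19}{10}\bigr)^2$. Expanding, the first is $t^2+3.125t+2.19\ldots$ and the second is $t^2+3.8t+3.61$. The difference is $0.675t+1.42>0$, so (i) holds.

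For (ii), with $(m,m')=(4,2)$, the left bound is $\frac{65}{16}\cdot\frac{33}{16}=\frac{2145}{256}\approx 8.379$. The right bound is at least $(2.9)^2=8.41$ already for $t=1$, and it increases in $t$. So (ii) holds for all $t\geq 1$, although the margin at $t=1$ is thin.

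For (iii), with $(m,m')=(\ell,2)$ and $\ell\leq 2t+2$, the left bound is at most $\bigl(2t+\frac{33}{16}\bigr)\cdot\frac{33}{16}\approx 4.125t+4.254$. Subtracting this from $t^2+3.8t+3.61$ leaves $t^2-0.325t-0.644$. At $t=1$ this equals $\approx 0.031>0$, and it is increasing for $t\geq1$, so (iii) follows.

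The main obstacle is that the margins in (ii) and in (iii) at $t=1$ are very small, so the constants $\frac{19}{10}$ and $\frac{1}{16}$ from Lemma \ref{cor-3.2} must be used exactly rather than loosely. I will also need to check that the hypotheses of Lemma \ref{cor-3.2} (ii) and (iv) hold in the degenerate cases $k=t+1$ or $\ell=t+1$, where some binomial coefficients vanish. If a margin turns out too tight, I would fall back on the sharper exact expressions $\widetilde{a}(x,t)=t+2-\frac{(t+1)(x-t-1)}{n-t-1}$ and \eqref{gg}, together with the lower bound on $n$, to recover the needed room.
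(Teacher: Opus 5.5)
Your proposal is correct and is exactly the paper's argument. You normalize via \eqref{equ-4} and \eqref{equ-6}, bound the two sides by Lemma~\ref{cor-3.2}~(ii) and (iv), and check $\left(t+\frac{19}{10}\right)^2>\left(m+\frac{1}{16}\right)\left(m'+\frac{1}{16}\right)$ for the three pairs $(m,m')$; your explicit arithmetic (including the thin margins $8.41>\frac{2145}{256}$ in (ii) and in (iii) at $t=1$) fills in the verification the paper leaves implicit, and the degenerate cases $k=t+1$ or $\ell=t+1$ cause no trouble since the closed forms for $\widetilde{a}$ and $\widetilde{g}$ remain valid there.
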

\proof If $(m,m')=(t+2,t+1)$, $(m,m')=(4,2)$, or $(m,m')=(\ell,2)$ with $\ell\leq 2t+2$, then by Lemma \ref{cor-3.2} (ii) and (iv), we obtain
\begin{align*}
\widetilde{a}(k,t)\widetilde{a}(\ell,t) -\widetilde{g}(m,k,\ell,t)\widetilde{g}(m',\ell,k,t) >\left(t+\frac{19}{10}\right)^2 -\left(m+\frac{1}{16}\right)\left(m'+\frac{1}{16}\right)>0,
\end{align*}
and so the lemma follows from \eqref{equ-4} and \eqref{equ-6}.$\qed$

\begin{lem}\label{<cc}
Let $\ell\geq t+2$. The following inequalities hold.
\begin{enumerate}[{\rm(i)}]
  \item $g((t+1)(\ell-t),k,\ell,t)g(1,\ell,k,t) < c_1(\ell,t)c_2(k,\ell,t)$.
  \item $g(\ell,k,\ell,t)g(2,\ell,k,t) < c_1(\ell,t)c_2(k,\ell,t)$ for $t\geq 2$ and $\ell\geq 2t+3$.
\end{enumerate}
\end{lem}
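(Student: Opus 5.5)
The plan is the same normalization used for Lemmas \ref{<hh} and \ref{<aa}. Divide both sides by ${n-t-1\choose k-t-1}{n-t-1\choose \ell-t-1}$. By \eqref{equ-7} and \eqref{equ-6}, each inequality becomes
$$\widetilde{g}(m,k,\ell,t)\,\widetilde{g}(m',\ell,k,t)<\widetilde{c}_1(\ell,t)\,\widetilde{c}_2(k,\ell,t),$$
with $(m,m')=((t+1)(\ell-t),1)$ in (i) and $(m,m')=(\ell,2)$ in (ii). The lower bound for the right-hand side comes from the latter part of Lemma \ref{cor-3.2}~(iii), which applies because $\ell\geq t+2$:
$$\widetilde{c}_1(\ell,t)\widetilde{c}_2(k,\ell,t)>(t+1)(\ell-t)+1-\frac{\ell-t}{(t+1)(k+\ell)^2}.$$
The left-hand side is bounded using the exact formula \eqref{gg}, namely $\widetilde{g}(m,x,y,t)\le m+\frac{y}{(t+1)(x+y)^2}$.

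For (i), I would not use the crude $\frac1{16}$ bound from Lemma \ref{cor-3.2}~(iv), since the leading terms coincide: both sides have leading coefficient $(t+1)(\ell-t)$. Instead I expand
$$\Big((t+1)(\ell-t)+\tfrac{\ell}{(t+1)(k+\ell)^2}\Big)\Big(1+\tfrac{k}{(t+1)(k+\ell)^2}\Big).$$
This equals $(t+1)(\ell-t)$ plus error terms:
\begin{itemize}
\item $\frac{(\ell-t)k}{(k+\ell)^2}$,
\item $\frac{\ell}{(t+1)(k+\ell)^2}$,
\item a product term of order $(k+\ell)^{-3}$.
\end{itemize}
Each of these is comfortably below $\frac14$, using $k\ell\le (k+\ell)^2/4$ and $\ell-t<\ell$. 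The right-hand side, by contrast, exceeds $(t+1)(\ell-t)+1-\frac{1}{(t+1)(k+\ell)}$. So it suffices to check that the sum of the errors plus $\frac{\ell-t}{(t+1)(k+\ell)^2}$ is less than $1$. This is the main (and only delicate) step. The key point is the bound $\frac{(\ell-t)k}{(k+\ell)^2}\le\frac14$, after which the remaining terms are tiny because $k+\ell\ge 2t+3\ge5$.

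For (ii), since $t\ge2$ and $\ell\ge 2t+3$, the leading terms differ. On the left, $\ell\cdot 2=2\ell$. On the right, $(t+1)(\ell-t)+1=2\ell+(t-1)\ell-t^2-t+1$. The gap $(t-1)\ell-t^2-t+1$ is at least $(t-1)(2t+3)-t^2-t+1=t^2-2\ge 2$. The left side, by Lemma \ref{cor-3.2}~(iv), is at most $(\ell+\frac1{16})(2+\frac1{16})$. This exceeds $2\ell$ only by $\frac{\ell}{16}+\frac{33}{256}$, which is too large for big $\ell$. So here too I use \eqref{gg} precisely: the error is at most $\frac{2\ell+\ell k}{(t+1)(k+\ell)^2}+O((k+\ell)^{-2})<\frac{1}{2}$.

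Comparing with the gap $t^2-2-\frac{\ell-t}{(t+1)(k+\ell)^2}>1$ then gives the strict inequality. Undoing the normalization via \eqref{equ-7} and \eqref{equ-6} yields both statements.
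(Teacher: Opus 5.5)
Your argument is correct and essentially the paper's: you normalize by $\binom{n-t-1}{k-t-1}\binom{n-t-1}{\ell-t-1}$, bound the right side below by Lemma~\ref{cor-3.2}~(iii), and expand the left side via \eqref{gg}, and your part (i) is in substance the paper's own computation. In (ii) the detour through \eqref{gg} is unnecessary, because your claim that the crude bound $(\ell+\frac{1}{16})(2+\frac{1}{16})$ fails for large $\ell$ is mistaken: the gap $(t-1)\ell-t^2-t+1$ grows with slope $t-1\geq 1>\frac{1}{16}$, and this is how the paper gets $\bigl(t-\frac{17}{16}\bigr)\ell-t(t+1)>0$ directly from Lemma~\ref{cor-3.2}~(iii) and (iv).
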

\proof
(i)
By \eqref{gg}, we have
\begin{align*}
\widetilde{g}((t+1)(\ell-t),k,\ell,t) \widetilde{g}(1,\ell,k,t)
\leq(t+1)(\ell-t)+\frac{\ell +(t+1)k(\ell-t)} {(t+1)(k+\ell)^2}+\frac{k\ell} {(t+1)^2(k+\ell)^4}.
\end{align*}
Then by Lemma \ref{cor-3.2} (iii), we obtain that
\begin{align*}
&~~~~(t+1)(k+\ell)^2\cdot\big(\widetilde{c}_1(\ell,t)
\widetilde{c}_2(k,\ell,t)- \widetilde{g}((t+1)(\ell-t),k,\ell,t) \widetilde{g}(1,\ell,k,t)\big)\\
&\geq (t+1)(k+\ell)^2-(t+1)k(\ell-t)-2\ell+t -\frac{k\ell}{(t+1)(k+\ell)^2}>0,
\end{align*}
and so (i) holds from \eqref{equ-7} and \eqref{equ-6}.

(ii)
From Lemma~\ref{cor-3.2}~(iii) and (iv), it follows that
\begin{align*}
\widetilde{c}_1(\ell,t)\widetilde{c}_2(k,\ell,t)- \widetilde{g}(\ell,k,\ell,t) \widetilde{g}(2,\ell,k,t)
>\left(t-\frac{17} {16}\right)\ell-t(t+1)
>0,
\end{align*}
due to $t\geq 2$ and $\ell\geq 2t+3$. Then (ii) holds from \eqref{equ-7} and \eqref{equ-6}.$\qed$

\begin{lem}\label{lh<hh}
Let $\ell\geq t+2$. The following hold.
\begin{enumerate}[{\rm(i)}]
  \item If $k\geq t+2$ and $\ell\geq 4t-1$, then $(\ell+1){n-t-1\choose k-t-1}h(\ell,t+1,t) < h(k,\ell,t)h(\ell,k,t)$.
  \item If $t\geq 2$ and $\ell\leq 4t-2$, then $(\ell+1){n-t-1\choose k-t-1}h(\ell,t+1,t) < a(k,t)a(\ell,t)$.
\end{enumerate}
\end{lem}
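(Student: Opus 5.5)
The plan is to normalize both sides by $\binom{n-t-1}{k-t-1}\binom{n-t-1}{\ell-t-1}$, as in the proofs of Lemmas \ref{<hh}--\ref{<cc}. The comparisons then become numerical inequalities between leading coefficients, with small error terms controlled by the lower bound on $n$.

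\textbf{Normalizing the left-hand side.} After normalization the left-hand side equals $(\ell+1)\,h(\ell,t+1,t)\binom{n-t-1}{\ell-t-1}^{-1}$. By Pascal's formula,
$$h(\ell,t+1,t)=\binom{n-t}{\ell-t}-\binom{n-t-2}{\ell-t}+t=\binom{n-t-1}{\ell-t-1}+\binom{n-t-2}{\ell-t-1}+t.$$
Since $\ell\geq t+2$, we have $\binom{n-t-2}{\ell-t-1}\le\binom{n-t-1}{\ell-t-1}$ and $\binom{n-t-1}{\ell-t-1}\geq n-t-1$. Hence the normalized left-hand side is at most
$$(\ell+1)\Big(2+\frac{t}{n-t-1}\Big)<2(\ell+1)+\varepsilon,$$
where $\varepsilon=\frac{t(\ell+1)}{n-t-1}$. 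By the assumed bound on $n$, $\varepsilon$ is far below $\frac{1}{10}$.

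\textbf{Part (i).} Apply Lemma \ref{cor-3.2}~(i). Since $k\geq t+2$, the normalized right-hand side exceeds
$$(k-t+1)(\ell-t+1)-\frac{1}{t+1}\geq 3(\ell-t+1)-\frac{1}{t+1}.$$
The difference with the bound above is at least $\ell-3t+1-\frac{1}{t+1}-\varepsilon$. When $\ell\geq 4t-1$ this is at least $t-\frac{1}{t+1}-\varepsilon>0$, which proves (i).

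\textbf{Part (ii).} Normalize the right-hand side $a(k,t)a(\ell,t)$ by the same factor and apply Lemma \ref{cor-3.2}~(ii). This gives a lower bound of $(t+\frac{19}{10})^2$, and that bound is valid for every $k\geq t+1$, including $k=t+1$. Using $\ell\leq 4t-2$, it suffices to show
$$(t+1.9)^2-2(4t-1)-\varepsilon=t^2-4.2t+5.61-\varepsilon>0.$$
The quadratic $t^2-4.2t+5.61$ has negative discriminant ($17.64-22.44<0$). Its minimum, at $t=2.1$, is $1.2$, so the expression is positive and (ii) follows.

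\textbf{Main obstacle.} The only delicate point is keeping the error terms uniformly small. These come from the summand $t$ in $h$ and from $\varepsilon$ in both parts. This requires checking that $\binom{n-t-1}{\ell-t-1}\geq n-t-1$ is large enough when $\ell=t+2$, which follows from $n\geq (t+1)^2(k+\ell)^2(k-t+1)(\ell-t+1)+k+\ell-t$.
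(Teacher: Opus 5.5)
Your proposal is correct and is essentially the paper's proof. Both normalize by $\binom{n-t-1}{k-t-1}\binom{n-t-1}{\ell-t-1}$, bound $\widetilde{h}(\ell,t+1,t)\le 2+\frac{t}{n-t-1}$ via Pascal's formula, and then invoke Lemma~\ref{cor-3.2}~(i) for part (i) and Lemma~\ref{cor-3.2}~(ii) for part (ii). The only difference is cosmetic: the paper rounds this bound up to $\frac{21}{10}$, whereas you carry $\varepsilon$ explicitly, which gives slightly more slack. One small slip: $(\ell+1)(2+\frac{t}{n-t-1})$ equals $2(\ell+1)+\varepsilon$ rather than being strictly less than it, but this is harmless because the strictness already comes from Lemma~\ref{cor-3.2}.
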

\proof
Since $\ell\geq t+2$ and $n\geq (t+1)^2(k+\ell)^2(k-t+1)(\ell-t+1)+k+\ell-t$, we obtain from \eqref{equ-5} that
$$\widetilde{h}(\ell,t+1,t)<\left(2{n-t-1\choose \ell-t-1}+t\right){n-t-1\choose \ell-t-1}^{-1}\leq 2+\frac{t}{n-t-1}<\frac{21}{10}
.$$

(i) By Lemma \ref{cor-3.2} (i), we have
\begin{align*}
\widetilde{h}(k,\ell ,t)\widetilde{h}(\ell,k,t)-(\ell+1)\widetilde{h}(\ell,t+1,t)
&> (k-t+1)(\ell-t+1)-\frac{1}{t+1}-\frac{21}{10}(\ell+1)
>0,
\end{align*}
due to $k\geq t+2$ and $\ell\geq 4t-1$. Then (i) holds from \eqref{equ-5}.

(ii) By Lemma \ref{cor-3.2} (ii), we have
\begin{align*}
\widetilde{a}(k,t)\widetilde{a}(\ell,t) -(\ell+1)\widetilde{h}(\ell,t+1,t)
>\left(t+\frac{19}{10}\right)^2 -\frac{21}{10}(\ell+1)>0
\end{align*}
due to $\ell\leq 4t-2$. Then (ii) holds from \eqref{equ-4} and \eqref{equ-5}. $\qed$

\begin{lem}\label{cc<hac}
The following hold.
\begin{enumerate}[{\rm(i)}]
  \item If $k\geq 2t$, then $c_1(k,t)c_2(k,k,t)< \big(h(k,k,t)\big)^2$.
  \item If $t+2\leq k\leq 2t-1$, then $c_1(k,t)c_2(k,k,t)< \big(a(k,t)\big)^2$.
  \item If $\ell\geq k+1$, then $c_1(k,t)c_2(\ell,k,t)<c_1(\ell,t)c_2(k,\ell,t)$.
\end{enumerate}
\end{lem}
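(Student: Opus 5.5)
The plan is to normalize every quantity by the relevant binomial coefficients, exactly as in Lemmas \ref{<hh}--\ref{lh<hh}. I then compare leading constants, using Lemma \ref{cor-3.2} to control the error terms. Throughout, the first argument of $c_1$ is the set size of a family with $(t+1)(k_i-t)+1$ covers. In the application (Case~2 of the proof of Theorem \ref{main-1}) this argument is at least $t+2$ by Proposition \ref{l+1_t+1_t}. So I take $k\geq t+2$ in all three parts, which is what makes Lemma \ref{cor-3.2}~(iii) available.

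\emph{Caveat I must resolve before writing the proof.} As worded, the statement only assumes $k,\ell\geq t+1$, and at the boundary it appears to fail:
\begin{itemize}
\item In (i), with $t=1$ and $k=2$, a direct count gives $c_1(2,1)=c_2(2,2,1)=h(2,2,1)=3$, so the two sides are equal, not strictly ordered.
\item In (iii), with $k=t+1$, the normalized left side tends to about $(t+2)^2$ while the right side is about $(t+1)(\ell-t)+1$. These cannot be ordered as claimed for $\ell$ close to $t+2$.
\end{itemize}
My plan therefore proves the lemma under $k\geq t+2$ and uses only that case, since that is the only case the proof of Theorem \ref{main-1} invokes. I have not shown that the literal statement holds for $k=t+1$, and I do not claim it does.

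\emph{Common reduction.} Each inequality has the form $c_1(y,t)c_2(x,y,t)$ against another product of the same degree $n^{x+y-2t-2}$. I divide both sides by ${n-t-1\choose x-t-1}{n-t-1\choose y-t-1}$ and use the notation \eqref{equ-4}, \eqref{equ-5}, \eqref{equ-7}. Lemma \ref{cor-3.2}~(iii) then gives, for $y\geq t+2$,
$$\widetilde c_1(y,t)\widetilde c_2(x,y,t)<(t+1)(y-t)+1+\frac{1}{(x+y)^2(x-t+1)},$$
together with the matching lower bound $(t+1)(y-t)+1-\frac{y-t}{(t+1)(x+y)^2}$.

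\emph{Part (i).} Here $k\geq 2t$. Lemma \ref{cor-3.2}~(i) gives $\widetilde h(k,k,t)^2>(k-t+1)^2-\frac1{t+1}$, and
$$(k-t+1)^2-(t+1)(k-t)-1=(k-t)(k-2t+1)\geq k-t\geq 2.$$
This gap of at least $2$ swallows the error terms $\frac1{t+1}$ and $\frac1{(2k)^2(k-t+1)}$.

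\emph{Part (ii).} Here $t+2\leq k\leq 2t-1$. Lemma \ref{cor-3.2}~(ii) gives $\widetilde a(k,t)^2>(t+\frac{19}{10})^2$. The upper bound above is at most $(t+1)(t-1)+1+\frac1{16}=t^2+\frac1{16}$, which is much smaller.

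\emph{Part (iii).} Here $\ell\geq k+1$, with $k\geq t+2$. Both sides are normalized by the same product ${n-t-1\choose k-t-1}{n-t-1\choose \ell-t-1}$, so Lemma \ref{cor-3.2}~(iii) applies to each side with the roles of $k$ and $\ell$ swapped. The left side is less than $(t+1)(k-t)+1+\frac1{(k+\ell)^2(\ell-t+1)}$. The right side is greater than $(t+1)(\ell-t)+1-\frac{\ell-t}{(t+1)(k+\ell)^2}$. The leading constants differ by $(t+1)(\ell-k)\geq t+1\geq 2$, and both error terms are below $\frac12$.

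\emph{Main obstacle.} The only delicate point is making sure the exact (non-asymptotic) error bounds from Lemma \ref{cor-3.2} are small enough, and in each part the gaps of at least $2$ leave ample room. The real issue is the boundary case $k=t+1$ flagged above, which must be checked against the statement's wording rather than assumed away.
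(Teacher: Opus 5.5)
Your proposal is correct and is essentially the paper's own proof. You normalize by ${n-t-1\choose k-t-1}{n-t-1\choose \ell-t-1}$ and compare leading constants via Lemma~\ref{cor-3.2}~(i)--(iii), with the same gaps: $(k-t)(k-2t+1)$ in (i), $(t+\frac{19}{10})^2$ against $t^2+\frac1{16}$ in (ii), and $(t+1)(\ell-k)$ in (iii).

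Your boundary caveat is also right. The paper's proof silently needs $k\geq t+2$ wherever it invokes Lemma~\ref{cor-3.2}~(iii). The literal statement does fail at $(t,k)=(1,2)$ in (i), where both sides equal $9$, and at $k=t+1$, $\ell=t+2$ in (iii), where the left side exceeds the right by $(t+1)\big((t+1)n-(t+2)(t+3)\big)>0$. However, the lemma is only used in Case~2 of the proof of Theorem~\ref{main-1}, where $t\geq 2$ and $k_2\geq t+2$, so nothing downstream is affected.
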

\proof
(i) Since $k\geq 2t$, it follows from Lemma \ref{cor-3.2} (i) and (iii) that
\begin{align*}
&~~~~(\widetilde{h}(k,k,t))^2- \widetilde{c}_1(k,t)\widetilde{c}_2(k,k,t)\\
&>(k-t+1)^2-\frac{1}{t+1}- (t+1)(k-t)-1-\frac{1}{4k^2(k-t+1)}\\
&=(k-2t+1)(k-t)-\frac{1}{t+1}-\frac{1}{4k^2(k-t+1)}>0,
\end{align*}
and (i) holds from \eqref{equ-5} and \eqref{equ-7}.

(ii) Since $t+2\leq k\leq 2t-1$, it follows from Lemma~\ref{cor-3.2} (ii) and (iii) that
\begin{align*}
(\widetilde{a}(k,t))^2- \widetilde{c}_1(k,t)\widetilde{c}_2(k,k,t) &>\left(t+\frac{19}{10}\right)^2 -(t+1)(k-t)-1-\frac{1}{4k^2(k-t+1)}
>0.
\end{align*}
Then (ii) holds from \eqref{equ-4} and \eqref{equ-7}.

(iii) It follows from Lemma \ref{cor-3.2} (iii) that
\begin{align*}
&~~~~\widetilde{c}_1(\ell,t)\widetilde{c}_2(k,\ell,t)- \widetilde{c}_1(k,t)\widetilde{c}_2(\ell,k,t)\\
&> (t+1)(\ell-t)+1-\frac{\ell-t}{(t+1)(k+\ell)^2} -(t+1)(k-t)-1-\frac{1}{(k+\ell)^2(\ell-t+1)}\\
&\geq t+1-\frac{1}{(t+1)(k+\ell)} -\frac{1}{(k+\ell)^2(\ell-t+1)}>0,
\end{align*}
and so (iii) follows from \eqref{equ-7}. $\qed$

\section*{Acknowledgments}
B. Lv is supported by the National Natural Science Foundation of China (12571347 \& 12131011), and Beijing Natural Science Foundation (1252010).  K. Wang is supported by the National Natural Science Foundation of China (12131011 \& 12571347) and Beijing Natural Science Foundation (1252010 \& 1262010).

\end{document}